\newcommand{\C}{\mathbb{C}}
\newcommand{\E}{\mathbb{E}}
\newcommand{\N}{\mathbb{N}}
\renewcommand{\P}{\mathbb{P}}
\newcommand{\R}{\mathbb{R}}
\newcommand{\Z}{\mathbb{Z}}
\newcommand{\DD}{\mathscr{D}}
\newcommand{\MM}{\mathscr{M}}
\newcommand{\PP}{\mathscr{P}}
\newcommand{\cA}{{\ensuremath{\mathcal A}}}
\newcommand{\cB}{{\ensuremath{\mathcal B}}}
\newcommand{\cF}{{\ensuremath{\mathcal F}}}
\newcommand{\cG}{{\ensuremath{\mathcal G}}}
\newcommand{\cJ}{{\ensuremath{\mathcal J}}}
\newcommand{\cK}{{\ensuremath{\mathcal K}}}
\newcommand{\cM}{{\ensuremath{\mathcal M}}}
\newcommand{\cP}{{\ensuremath{\mathcal P}}}
\newcommand{\cQ}{{\ensuremath{\mathcal Q}}}
\newcommand{\cT}{{\ensuremath{\mathcal T}}}
\newcommand{\cX}{{\ensuremath{\mathcal X}}}
\newcommand{\cW}{{\ensuremath{\mathcal W}}}
\renewcommand{\gg}{{\mbox{\boldmath$g$}}}
\newcommand{\hh}{{\mbox{\boldmath$h$}}}
\newcommand{\vv}{{\mbox{\boldmath$v$}}}
\newcommand{\ww}{{\mbox{\boldmath$w$}}}
\newcommand{\xx}{{\mbox{\boldmath$x$}}}
\newcommand{\yy}{{\mbox{\boldmath$y$}}}
\newcommand{\cC}{{\mbox{\boldmath$C$}}}
\newcommand{\fF}{{\mbox{\boldmath$F$}}}
\newcommand{\gG}{{\mbox{\boldmath$G$}}}
\newcommand{\hH}{{\mbox{\boldmath$H$}}}
\newcommand{\jJ}{{\mbox{\boldmath$J$}}}
\newcommand{\vV}{{\mbox{\boldmath$V$}}}
\newcommand{\scC}{{\mbox{\scriptsize\boldmath$C$}}}
\newcommand{\shH}{{\mbox{\scriptsize\boldmath$H$}}}
\newcommand{\nnu}{{\mbox{\boldmath$\nu$}}}
\newcommand{\sfB}{{\sf B}}
\newcommand{\sfC}{{\sf C}}
\newcommand{\sfK}{{\sf K}}
\newcommand{\sfO}{{\sf O}}
\newcommand{\Kliminf}{K\kern-3pt-\kern-2pt\mathop{\rm
lim\,inf}\limits}  % Kuratowski liminf di insiemi
\newcommand{\Klimsup}{K\kern-3pt-\kern-2pt\mathop{\rm lim\,sup}\limits}  % Kuratowski liminf di insiemi
\newcommand{\supp}{\mathop{\rm supp}\nolimits}   % supporto 
\newcommand{\tr}{\mathop{\rm tr}\nolimits}     % traccia
\renewcommand{\d}{{\mathrm d}}
\newcommand{\restr}[1]{\lower3pt\hbox{$|_{#1}$}}
\newcommand{\weakto}{\rightharpoonup}
\newcommand{\nchi}{{\raise.3ex\hbox{$\chi$}}}
\newcommand{\Rd}{{\R^d}}
\def\qed{\ifmmode % if math mode, assume display: omit penalty etc.
  \else \leavevmode\unskip\penalty9999 \hbox{}\nobreak\hfill
  \fi               
    \qquad           \hbox{\hskip.5em $\square$
    %     \vrule width.4em height.6em
                 %depth.05em
                \hskip.1em}}
\def\endproofsym{\qed}
\newenvironment{proof}[1][Proof]{\def\endproofsym{\qed}\trivlist\item[\hskip\labelsep{%
\noindent{\normalfont\emph{#1}.}\hskip .321429\parindent}]\ignorespaces}% Added colon after proof title 1/15/97
{\endproofsym\endtrivlist}
\numberwithin{equation}{section}
\newtheorem{theorem}{Theorem}[section]
\newtheorem{lemma}[theorem]{Lemma}
\newtheorem{proposition}[theorem]{Proposition}
\newtheorem{definition}[theorem]{Definition}
\newtheorem{remark}[theorem]{Remark}
\newtheorem{hypothesis}[theorem]{Hypothesis}
\newenvironment{system} 
{\left\lbrace\begin{array}{@{}l@{}}}
{\end{array}\right.}
\newcommand{\mres}{\mathbin{\vrule height 1.6ex depth 0pt width
0.13ex\vrule height 0.13ex depth 0pt width 1.3ex}}
\newcommand{\dive}{\operatorname{div}}
\newcommand{\ent}{\operatorname{H}}
\newcommand{\law}{\operatorname{Law}}
\title{Large deviations for interacting particle systems: joint mean-field and small-noise limit}
\author{ Carlo Orrieri \thanks{Dipartimento di Matematica ''F. Casorati'', Universit\`a degli studi di Pavia. Via Adolfo Ferrata 5, 27100 Pavia, Italy. \texttt{carlo.orrieri01@ateneopv.it}}}
\begin{document}

\maketitle

\begin{abstract}
We consider a system of stochastic interacting particles in $\R^d$ and
we describe large deviations asymptotics in a joint mean-field and small-noise limit.
Precisely, a large deviations principle (LDP) is established for the empirical measure and the stochastic current, as the number of particles tends to infinity and the noise vanishes, simultaneously.
We give a direct proof of the LDP using tilting and subsequently exploiting the link between entropy and large deviations.
To this aim we employ consistency of suitable deterministic control problems associated to the stochastic dynamics.  
\end{abstract}

\tableofcontents

\section{Introduction}
In statistical mechanics, macroscopic properties of a physical system are usually derived from a probabilistic description of complicated interactions at a microscopic level.
Generally, the macroscopic behaviour is provided by means of a deterministic partial differential equation, also known as hydrodynamical equation.  
At a microscopic scale instead, the dynamic can be described via a stochastic interacting particles model whose choice is then fundamental to get a rigorous derivation of the above macroscopic equation. 
A step further in the study of (out of equilibrium) systems consists in understanding whether it is possible, and how large is the probability, to observe a different macroscopic behaviour from the one predicted by hydrodynamics.
To answer this question it is quite natural to look for a large deviations principle (LDP in short), for which some fluctuations around the equilibria of the quantities involved are also captured.

\smallskip

Within this framework, a  laboratory but rich enough example to investigate is the one proposed by McKean in the context of propagation of chaos, see e.g. \cite{sznitman1991topics}.
Given a bunch of particles randomly moving in the whole space $\R^d$, we prescribe their evolution with a system of It\^o-type SDEs with independent Brownian noises.    
The interaction between the (exchangeable) particles is required to be of mean field type, i.e. each particle depends on the current empirical distribution of the system, and the coefficients of all the equations have the same functional form.
Here, the relevant physical quantity to deal with is the particles density, and it has been proved in several situations that the associated empirical measure gives rise, after a proper rescaling,  to a macroscopic density solving a Vlasov-type equation.
The mean field character of the interaction is fundamental in this procedure: it guarantees that the contribution of any given particle to the empirical distribution is small when a sufficiently large number of particles is considered.  
Also, from a different perspective, the limit PDE can be thought as a model simplification of the $N$-particles system and can be used to investigate properties of the microscopic system as the number of particles is very large.

\smallskip

The present paper is an attempt to clarify relations among the various descriptions of mean field systems in $\R^d$, focusing on the micro/macro and deterministic/stochastic dualities.  
A rough picture of the problem is given in the following diagram:

%\begin{figure}[h]
\begin{center}
\begin{tikzpicture}
  \matrix (m) [matrix of math nodes,row sep=4em,column sep=4em,minimum width=2em]
  {
     \text{SDE}^N & \text{ODE}^N \\
     \text{McKean-Vlasov} & \text{Vlasov PDE} \\};
  \path[-stealth]
    (m-1-1) edge node [left] {$N \uparrow +\infty$} (m-2-1)
            edge node [above] {$\varepsilon \downarrow 0$} (m-1-2)
            edge [dashed] (m-2-2)
    (m-2-1.east|-m-2-2) edge node [below] {$\varepsilon \downarrow 0$}
            node [above] {} (m-2-2)
    (m-1-2) edge node [right] {$N \uparrow + \infty$} (m-2-2) ;
\end{tikzpicture}
\end{center}
%\caption{Caption}
%\label{fig:diagram}
%\end{figure}

In one corner, from a microscopic-stochastic point of view, the system is modelled through $N$ stochastic equations with interaction (SDE$^N$), as briefly outlined above.
A counterpart of this description is given on the opposite side in a microscopic-deterministic fashion, where  $N$ deterministic differential equations (ODE$^N$)  govern the dynamic.
The relation between the two pictures is a well studied topic in the context of Friedlin-Wentcell theory of random perturbation of dynamical systems and it is represented in the above diagram by the arrow with vanishing noise  ($\varepsilon \downarrow 0$).

As the number of particles increases ($N \uparrow +\infty$), in the lower left corner  we deal with a macroscopic limit process (McKean-Vlasov) also referred to as  nonlinear diffusion.
Here we have to take into account both the limit behaviour of a typical particle and the limit of the empirical distribution. In fact, the nonlinear character of the diffusion originates from the fact that the dynamic of a typical particle depends on the particle distribution itself.

For what concerns the macroscopic characterisation on the right hand side (Vlasov PDE) we have at our disposal at least two different approaches.
On one hand, starting from the microscopic-deterministic model  (ODE$^N$) and sending $N \uparrow + \infty$, we obtain a continuity equation whose velocity field depends on the solution itself. This is in line with the usual mean-filed limit for finite dimensional interacting systems to which a large literature is devoted. 
On the other hand, a Vlasov-type PDE can also be obtained by a vanishing viscosity procedure ($\varepsilon \downarrow 0$) starting from the nonlinear diffusion for the law of the McKean-Vlasov process.
  
\smallskip

It seems natural to wonder whether the above diagram possesses some form of commutativity: Is it possible to freely interchange the two limit operations $N \uparrow +\infty$ and $\varepsilon \downarrow 0$? and to what extent?
With this question in mind, in the present manuscript we   go a step further in the analysis by considering large deviations asymptotics for the empirical measure and the associated stochastic current, trying to capture also fluctuations around the equilibria as $N \uparrow +\infty$ and $\varepsilon \downarrow 0$.

Partial answer to this question are present in the literature.
For what concerns the limit $N \uparrow + \infty$, the main reference for large deviations of stochastic mean field particle systems we refer to is \cite{dawson1987large}, see also \cite{oelschlager1984martingale, kipnis1990large}. In \cite{dawson1987large} the authors deal with uniformly nondegenerate diffusions, interacting through the drift term, and they derive a LDP for the empirical measure via a careful discretization procedure. 
A subsequent result in this direction has been obtained in \cite{budhiraja2012large} where the authors adopted a weak convergence approach combined with a variational representation result for moments of nonegative functionals of Brownaina motion \cite{boue1998variational}. This strategy actually bypasses the above mentioned discretization procedure as well as exponential probability estimates, and could cover some models with interaction in the diffusion.
Many other generalizations/directions have been explored in the literature, let us refer to e.g. \cite{dawson1994multilevel} for multilevel LDPs, \cite{delmoral1998large} for discrete-time systems \cite{daipra1996mckean} for what concerns random environment, \cite{leonard1995large} for Jump processes, \cite{deuschel2018enhanced} in the rough path setting and \cite{lacker2017limit} for the application to the theory of control.
Also the deterministic counterpart of the mean field theory ($N \uparrow +\infty$) is by now an active area of research collecting motivations ranging from physics to biology, from social sciences to control theory. In the last    
decade there has been a significant effort in providing rigorous derivation of the PDE models starting from finite dimensional systems.
For a general result in this direction we refer to \cite{canizo2011well}, where a well-posedness theory for some kinetic models is taken into account. See also \cite{fornasier2018mean} for further references and an application to optimal control problems.  
The convergence $\varepsilon \downarrow 0$ at the level of the particles system fits into the framework of Friedlin-Wentcell theory \cite{freidlin2012random}.  
Whereas for what concerns the nonlinear diffusion, 
%a wellposedness result was firstly obtained in [mckean] and later extended using propagation of chaos in [melard].
a LDP for McKean-Vlasov equations in the small noise limit has been firstly established in \cite{herrmann2008large} and then generalized in many directions, see e.g. the recent \cite{reis2017freidlin} and the references therein.
Recall that, at a purely PDE level, the limit $\varepsilon \downarrow 0$ coincides with a vanishing viscosity limit for the nonlinear diffusion towards the solution to the Vlasov PDE. 

Finally, in \cite{herrmann2016mean} the authors addressed the problem of interchanging mean-field limit with the small noise-one. What they proved  is that the rate functional associated with the first particle in a mean field system actually converges to the rate functional of the hydrodynamical equation as $N$ becomes large.  

 \smallskip

In this paper we further study the combination of mean-field limit and small-noise limit by establishing  a LDP for the empirical measure and stochastic current as $\varepsilon \downarrow 0, N \uparrow +\infty$ simultaneously. 
A general motivation for studying LDPs for the pair measure-current comes from non-equilibrium statistical physics, in which the current is an important observable of the system. 
No less importantly, within this framework an explicit formula for the rate functional is often feasible  and the corresponding  LDP formulation for the empirical measure can be obtained by contraction.

%To clarify this notation we refer to the context of ergodic Markov processes, where a LDP for the empirical measure is often referred to as \textit{level 2} LDP, \textit{level 2.5} concerns the joint fluctuations of the empirical measure and empirical current (or alternatively empirical flow) and \textit{level 3} deals with the empirical process. 

More specifically, in the present setting  we consider $N$ particles $(x_1, \ldots, x_N)$ in $\R^d$ solving the system of SDEs:
\begin{equation}
  \label{eq:state_intro}
  \d x_i^{N,\varepsilon}(t)=\fF(x_i^{N,\varepsilon}(t),\xx^{N,\varepsilon}(t)) \d t + \sqrt{\varepsilon}\d W_i(t) ,\quad i=1,\cdots, N,\qquad
  \xx^N(0)=\xx^N_0.
\end{equation}
where the map $\fF:\R^d\times (\R^d)^N\to \R^d$ models the mean-field interaction and $(W_i(t))_{t \in [0,T]}$ are $d$-dimensional independent Brownian motions.
We associate to the system the empirical measure
\begin{equation}\label{empirical:measure_intro}
\mu^{N,\varepsilon}_t := \frac{1}{N}\sum_{i=1}^N \delta_{x^{N,\varepsilon}_i(t)}
\end{equation}
and for every $\eta \in C^\infty_c([0,T] \times \R^d; \R^d)$ we define the stochastic current as
\begin{equation}\label{stoch:current_intro}
J^{N,\varepsilon}(\eta) := \frac{1}{N}\sum_{i=1}^N \int_0^T \eta(t,x_i^{N,\varepsilon}) \circ \d x^{N,\varepsilon}_i(t).
\end{equation}
If we denote by $\P^{N,\varepsilon}$ the law of the solution to \eqref{eq:state_intro} and by $\cX$ the space 
\[\cX: = C([0,T];\PP_1(\R^d)) \times H^{-s_1}\left((0,T); H^{-s_2}(\R^d;\R^d)\right),\]
we aim at showing that the probability measures  
$\P^{N,\varepsilon} \circ (\mu^N, J^{N,\varepsilon})^{-1} \in \PP(\cX)$ satisfy a LDP in $\cX$ with speed $\varepsilon /N$ and (good) rate functional given in a variational form by 
\begin{equation}\label{rate:intro}
\begin{split}
I(\mu,J)  = \sup_{\eta \in C^{\infty}_c(Q;\R^d)} \Big\lbrace  &J(\eta)  - \int_0^T \langle \mu_t, \eta(t, \cdot) \cdot \fF(\cdot, \mu_t) \rangle \d t \\
& - \frac{1}{2}\int_0^T \langle \mu_t, \left| \eta(t,\cdot)  \right|^2 \rangle \d t \quad \Big| \;\partial_t \mu_t  + \nabla \cdot J = 0 \Big\rbrace.
\end{split} 
 \end{equation}
This is to say that for any Borel set $\sfB \subset \cX$
 \begin{equation}
 \begin{split}
- \inf_{(\mu, J) \in \mathring{\sfB}} I(\mu, J) \leq \liminf_{\substack{\varepsilon \to 0\\ N \to +\infty}} \frac{\varepsilon}{N} \log \P^{N,\varepsilon} &((\mu^{N}, J^{N,\varepsilon}) \in \mathring{\sfB}) \\
&\leq  \limsup_{\substack{\varepsilon \to 0\\ N \to +\infty}} \frac{\varepsilon}{N} \log \P^{N,\varepsilon} ((\mu^{N}, J^{N,\varepsilon}) \in \bar\sfB) \leq - \inf_{(\mu, J) \in \bar\sfB} I(\mu, J),
\end{split}
\end{equation}
independently of the order of the limits in $\varepsilon$ and $N$ (see Theorem \ref{t:LDP} for a precise statement with minimal assumptions).
The proof of the above result is exhibit in a direct way, finding a microscopic perturbation of the system, macroscopically non trivial, from which it is possible to get the correct form of the large deviation functional.
This is more evident writing the rate functional in an explicit fashion.
In fact, denoting $\tilde \mu:= \int_0^T \delta_t \otimes \mu_t \, \d t$,  when $I (\mu,J)<+\infty$ there exists a vector field $\hh \in L^2(Q,\tilde \mu;\R^d)$ for which $\d J = \hh  \d \tilde\mu$  and
\begin{equation}
I(\mu,\hh) = \frac{1}{2} \int_0^T \langle \mu_t, |\hh(t,\cdot)|^2 \rangle \d t,
\end{equation}
for all the pairs $(\mu,\hh)$ satisfying  in a distributional sense
\begin{equation}\label{eq:cont:intro}
\partial_t \mu_t + \dive \left( (\fF(\cdot, \mu_t) + \hh)\mu_t \right)= 0.
\end{equation}
Within this setting, the formulation of the LDP for the pair measure-current is fundamental to get the explicit form of the rate functional above. 
Also notice that in the limit $\varepsilon \downarrow 0$, given a  measure $\mu$ there exist more than one current $J$ for which the continuity equation is satisfied and the application of a contraction principle is not trivial.  
In the Appendix (see Theorem \ref{ex_uniq_vlasov}) we finally give some sufficient condition on the velocity field $\vv:= \fF + \hh$ and on the initial datum $\mu_0$ for which wellposedness of the Vlasov PDE \eqref{eq:cont:intro} is guaranteed. 

The proof of the large deviations upper bound is constructed by a specific tilting of the measures, providing the right estimates on compact sets. To get the required expression for closed sets a careful exponential tightness argument for both empirical measure and stochastic current is needed.
The proof of the lower bound is more delicate.
Firstly, in  Theorem \ref{t.lb_entropia} we exploit the relation between large deviations and $\Gamma$-convergence developed in \cite{mariani2012gamma}: this theorem is fundamental as it translates the lower bound estimate in a $\Gamma$-$\limsup$ inequality.
Then, we take advantage of the result obtained in \cite{fornasier2018mean} to construct a suitable recovery sequence.
More precisely, in \cite{fornasier2018mean} the authors studied the interplay between finite and infinite dimensional control problems for multi-agents systems and they obtained a $\Gamma$-convergence result (as the number of agents goes to infinity) under weak assumptions on the interaction kernel as well as on the cost functional.
This is crucial because the rate function $I(\mu, \hh)$ corresponds to a particular choice of the cost treated in \cite{fornasier2018mean} and it turns out that the recovery sequence actually provides a good perturbation of the system of SDEs for which the associated entropy remains controlled, see Theorem \ref{t:recovery_det} and Theorem \ref{t.LDP_lb_1}.

\smallskip

The paper is organised as follows. 
Some preliminary material concerning measure theory and topological issues, basic large deviations definitions, property of stochastic currents and solutions to the continuity equations are collected in Section 2.
Section 3 is devoted to the setting of the problem, hypotheses and main results.
The large deviations upper bound is discussed in Section 4 along with exponential tightness estimated and goodness of the rate functional.
In Section 5 the strategy to get the large deviations lower bound is presented, and the proofs of the main theorems are exhibited.
Finally, some sufficient conditions to have wellposedness of the Vlasov PDE are presented in the Appendix.

\section{Notation and preliminaries}

The following notation will be used throughout the paper. 
%\blue controllare $Q$ e $[0,T] \times \Rd$ \nc

We fix $(\Omega, \cF, \P)$ a probability space endowed with a filtration $(\cF_t)_{t \in [0,T]}$ satisfying the usual conditions as well as a family $\lbrace W_i, i \in \N \rbrace$ of independent $d$-dimensional Brownian motions. 
Given a topological space $X$, we write $\PP(X)$ for the space of Borel probability measures on $X$. We endow $\PP(X)$ with the topology of weak (equivalently, narrow) convergence, in duality with bounded continuous functions $C_b(X)$.
In the special case $X = \R^d$, we will also use the notation $\PP_p(\R^d)$ referring to probability measures with finite $p$th-order moment:
\[ \cP \in \PP_p(\R^d) \Longleftrightarrow \cP \in \PP(\R^d) \text{ and } \int_{\R^d} |x|^p \d \cP(x) < +\infty, \qquad p\geq 1. \]  
The space of Borel and vector-valued Borel measures is denoted by $\MM(\R^d)$, $\MM(\R^d;\R^d)$, respectively.  
Given $X,Y$ two topological spaces, $\cP \in \PP(X)$ and a map $f : X \to Y$, we alternately use the notation $\cP \circ f^{-1}$ or $f_\sharp \cP$ to denote the push forward, or the image law, of the probability measure $\cP$ under the map $f$. 
We furthermore refer to the compact-open topology on $C(X,Y)$ as to the topology whose corresponding subbase is given by the sets $W(K,U) = \lbrace f \in C(X,Y): f(K) \subset U \rbrace$ as $K,U$ range over all compact subsets of $X$ and opens subsets of $Y$, respectively.
 
We indicate by $Q$ the cylinder $Q := (0,T) \times \R^d$ and we write $a \cdot b$ for the scalar product in $\R^d$, $a,b \in \R^d$.
$C_c^\infty(Q)$ stands for the set of smooth compactly supported functions in $Q$ and the notation
$C_c^{1,2}(Q)$ will be used for the set of compactly supported functions in $Q$ which are $C^1$ in time and $C^2$ in space. 
Given $D$ a smooth domain in $\R^d$, the fractional Sobolev spaces $W^{s,2}(D)$, $s \in \R$, will be shorthand $H^s(D)$.
If $\mu \in \DD'(X)$ is a distribution (respectively, $\mu \in \PP(X)$), we denote by $\langle \mu, \phi \rangle = \mu(\phi)$ the duality pair with a smooth function $\phi \in C_c^\infty(X)$ (respectively $\phi \in C_b(X)$). 
Lastly, when we write $a \lesssim b$ we mean that there exists a positive constant $C$ for which $a \leq Cb$. 

\medskip

Let us now recall a version of the Gronwall lemma which will be used in the sequel: 
if $a \in L^1(t_0,t_1)$, $c \geq 0$ and $u \in C([t_0,t_1];\R)$ satisfy
\begin{equation}
u(t) \leq a(t) + c \int_{t_0}^tu(s)\d s, \qquad \forall \, t  \in [t_0,t_1],
\end{equation} 
then the following estimate holds
\begin{equation}\label{gronwall}
u(t) \leq a(t) + c \int_{t_0}^t a(s) e^{c(t-s)} \d s, \qquad \forall \; t \in [t_0,t_1]
\end{equation}

\subsection{Some useful results in measure theory}
\label{sec.meas-theory}
A completely regular space $E$ is a topological space such that for every closed set $C \subset E$ and every point $x \in E \setminus C$ there exists a continuous function $f : E \to [0,1]$ for which $f(x) = 1$ and $f(y) = 0$, for every $y \in C$. Roughly speaking, it is possible to separate $x$ and $C$ with a continuous function. 
A completely regular space which satisfies the Hausdorff condition is called Tychonoff space.  
Notice that, given $X$ a normed space, the weak* topology of $X'$ is Tychonoff.
Moreover, if $X$ is separable, bounded sets in $X'$ are metrizable.

We say that a map $f: E \to F$ between two topological spaces is Borel measurable if $f^{-1}(A)$ is a Borel set, for any open set $A$. 
We denote by $\cM(E)$ the collection of real-valued Borel measurable maps. 
If $E$ is a topological space and $\cF \subset \cM(E)$, we say that $\cF$ \textit{separates points} of $E$ if for $x \neq y \in E$ there exists $h \in \cM$ such that $h(x) \neq h(y)$.
Again, we say that a set $\cG \subset \cM(E)$ is \textit{separating} for $E$ if given $\cP, \cQ \in \PP(E)$ with the property
\begin{equation}
\int_E h(x) \d \cP(x) = \int_E h(x) \d\cQ(x) \qquad \forall \, h \in \cG,
\end{equation}
then it follows that $\cP = \cQ$.
A classical result \cite[Prop.~3.4.4]{ethier1986markov} assures that if $(E,d)$ is a complete, separable and locally compact metric space, then $C_c(E)$ is separating (actually convergence determining). 
Notice that, being $C_c(E)$ separating for $E$, if $\cP, \cQ \in \PP(E)$ with $\cP \neq \cQ$ then there exists a function $h \in C_c(E)$ such that $\int_E h(x)\d \cP(x) \neq\int_E h(x) d\cQ(x)$. 
This means that the family 
\begin{equation}
\cF:= \left\lbrace \bar h: \bar h(\cP) = \int_E h(x)\d\cP(x),  h \in C_c(E) \right\rbrace \subset C(\PP(E); \R)
\end{equation}
separates points of $\PP(E)$ (endowed with the topology of the narrow convergence).

\medskip

Given $p \geq 1$, we define the $L^p$-Wasserstein distance
\begin{equation}
\label{eq:wasserstein}
W_p(\mu_0, \mu_1) := \inf \left\lbrace \int_{\R^d \times \R^d} |x-y|^p \,\d\gamma(x,y): \gamma \in \Pi(\mu_0,\mu_1) \right\rbrace;
\end{equation}
where $\Pi(\mu_0,\mu_1)$ is the set of the optimal plans:
\begin{equation*}
\Pi(\mu_0,\mu_1):= \lbrace \gamma \in \PP(\R^d \times \R^d): \gamma (B \times \R^d)= \mu_0(B), \gamma (\R^d \times B)= \mu_1(B) \quad \forall \, B \text{ Borel set in } \R^d \rbrace. 
\end{equation*}
The infimum in \eqref{eq:wasserstein} is always attained (and finite) if $\mu_0,\mu_1$ belong to the space
$\PP_p(\R^d)$ of Borel probability measure with finite $p$-moment.
Let us notice that $\PP_p(\R^d)$ endowed with the Wasserstein distance $W_p(\mu_0, \mu_1)$ is a complete and separable metric space.
A sequence $(\mu_n)_{n \in \N} \subset \PP_p(\R^d)$ converges to a limit $\mu \in \PP_p(\R^d)$, with respect to the Wasserstein distance $W_p$, i.e. $W_p(\mu_n,\mu) \to 0$, if 
\begin{equation}
\lim_{n \to \infty} \int_{\R^d} \varphi(x) \d \mu_n(x) = \int_{\R^d} \varphi(x) \d \mu(x), \qquad \forall \, \varphi \in C(\R^d), \;\sup_{x \in \R^d} \frac{\varphi(x)}{1 + |x|^p} <+ \infty
\end{equation} 
Notice that the class $C_c(\R^d)$ also separates points of $\PP_p(\R^d)$ endowed with the $p$-Wasserstein distance. 

If $X$ is a Polish space, Prokhorov theorem guarantees that a subset $\cK \subset \PP(X)$ is tight if and only if it is (relatively) compact. 
Moreover this is equivalent to the existnece of a function $\varphi: X \to \R_+$ with compact sublevels such that
\begin{equation}\label{UI}
\sup_{\nu \in \cK} \int_X \varphi(x) \d\nu(x) < +\infty.
\end{equation}

Consider now the subset of discrete measures $\PP^N(\R^d) \subset \PP_p(\R^d)$ given by 
\[ \PP^N(\R^d):= \left\lbrace \mu \in \PP_p(\R^d): \mu =  \frac{1}{N} \sum_{i=1}^N \delta_{x_i}, \; \text{ for some } x_i \in \R^d \right\rbrace  \]

Starting from a vector $\xx:= (x_1, \ldots, x_N) \in (\R^d)^N$ we can associate the measure $\mu^N[\xx]:= \frac{1}{N} \sum_{i=1}^N \delta_{x_i} \in \PP^N(\R^d)$  and
we refer to the map $\mu^N: (\R^d)^N \to \PP^N(\R^d)$ as to the empirical measure.
Notice that, given $\xx, \yy \in (\R^d)^N$ it holds 
\begin{equation}
W_p^p(\mu^N[\xx], \mu^N[\yy]) = \min_{\sigma} \frac{1}{N} \sum_{i=1}^N |x_i - \sigma(\yy)_{i}|^p,
\end{equation}
where $\sigma: (\R^d)^N \to (\R^d)^N$ denotes a permutation of coordinates of vectors in $(\R^d)^N$.

In the following we say that a map $\gG:\R^d\times (\R^d)^N\to \R^k$ is symmetric if 
\begin{displaymath}
  \gG(x,\yy)=\gG(x,\sigma(\yy))\quad\text{for every permutation  }\sigma:(\R^d)^N\to (\R^d)^N.
\end{displaymath}
Given a symmetric and continuous map $\gG^N$ we can associate
a function defined on measures $G:\R^d\times \PP^N(\R^d)\to \R^k$ by setting
\begin{equation}
  \label{eq:4}
  G(x,\mu[\yy]):=\gG(x,\yy).
\end{equation}

\medskip

If $X$ is a Polish space and $\cP, \cQ \in \PP(X)$ are two probability measures, the relative entropy of $\cQ$ with respect to $\cP$ is defined as:
\begin{equation}
\ent(\cQ | \cP) : = 
\begin{system}
\int_X \log (  \frac{d\cQ}{d\cP}) \d \cQ, \qquad \;\,\text{ if } \cQ \ll \cP; \\
+ \infty \qquad \qquad \quad \qquad  \text{ otherwise}.	
\end{system} 
\end{equation} 
Equivalently $\ent(\cQ | \cP) := \sup \lbrace \cQ(\phi) - \log(\cP(e^\phi)), \phi \in C_b(X) \rbrace$, from which the convexity of the map $\ent(\cdot, \cP)$ easily follows.
It is useful to recall the following basic inequality
\begin{equation}
\cQ(A) \leq \frac{\ent(\cQ | \cP) + \log 2}{\log(1 + \cP(A)^{-1})}.
\end{equation}
Moreover, if $Y$ is a Polish space and $f: X \to Y$ a measurable function  then
\begin{equation}\label{eq:ent_composiz.}
\ent(f_\sharp \cQ | f_\sharp \cP) \leq \ent(\cQ | \cP). 
\end{equation}

\subsection{Large deviations principle}

Large deviations estimates describe the limiting behaviour of a family of probability measure through the knowledge of a rate functional.
We refer to \cite{dembo1998large} for a general treatise of the subject.
Let us recall here the very definition of a Large Deviation Principle (in short LDP). 
\begin{definition}
Let $X$ be a Hausdorff topological space and $\cP_\varepsilon \in \PP(X)$ a family of probability measures on $X$. 
We say that $\cP_\varepsilon$ satisfies a good large deviation principle with speed $\beta_\varepsilon \downarrow 0$ and rate function $J: X \to [0,+\infty]$ if the following conditions are satisfied:
\begin{itemize}
\item[(i)] (Goodness) For every $a \geq 0$, the set $\lbrace x \in X: J(x) \leq a \rbrace$ is compact.
\item[(ii)] (Upper bound) For every closed set $C \subset X$
\begin{equation}\label{ub}
\limsup_{\varepsilon \to 0} \beta_{\varepsilon} \log \cP_\varepsilon(C) \leq - \inf_{x \in C} J(x). 
\end{equation}
\item[(iii)] (Lower bound) For every open set $U \subset X$
\begin{equation}
\liminf_{\varepsilon \to 0} \beta_{\varepsilon} \log \cP_\varepsilon(U) \geq - \inf_{x \in U} J(x). 
\end{equation}
\end{itemize} 	
\end{definition}

Establishing a LDP for the family $\cP_\varepsilon$ gives a precise formulation of (logarithmic) asymptotic bounds of the form $\cP_\varepsilon(B) \asymp \exp(-\beta^{-1} \inf_B J)$.
The validity of such a result implies that $\inf_{x \in X} J(x) = 0$, where the zero level set is not empty thanks to the goodness of the rate functional.
Notice that, in the special case $\lbrace \bar x \rbrace = \lbrace x : J(x)  =0 \rbrace$, a LDP implies the law of large numbers $\cP_\varepsilon \to \delta_{\bar x}$.

A fairly classical strategy to get \eqref{ub} consists in showing it first for compact sets and subsequently prove that the most of the probability mass is concentrated on compact sets.
To this aim, the notion of exponential tightness comes into play:
\begin{definition}
Let $X$ be a Hausdorff topological space and a sequence $\beta_\varepsilon \downarrow 0$.
We say that a sequence of probability measures $\cP_\varepsilon \in \PP(X)$ is exponentially tight with speed $\beta_\varepsilon$ if there exists a sequence of compacts $K_l$ in $X$ such that 
\begin{equation}
\lim_{l \to + \infty} \limsup_{\varepsilon \to 0} \beta_\varepsilon \log \cP_{\varepsilon}(X \setminus K_l) = -\infty.
\end{equation} 
%\blue caso semplice con  $\beta_\varepsilon = \varepsilon$: $\limsup_{\varepsilon \to 0} \varepsilon \log \cP_{\varepsilon}(X \setminus K_l) = - l$ 
\end{definition} 

Notice that exponential tightness of the family $\cP_\varepsilon$ is not a priori necessary for the formulation of a LDP with good rate functional. 
Nonetheless, if the family $\cP_\varepsilon$ is exponentially tight and satisfies a LDP lower bound for every open sets, then the rate functional is automatically good.

Let us now recall a characterization of exponential tightness in the space of continuous functions $C([0,T];\R)$, inspired by \cite{Billingsley1999convergence}, which will be useful in the sequel.
Given a function $u \in C([0,T]; \R)$ we denote by $\omega(u;\delta)$ its continuity modulus:
\begin{equation}\label{cont_modulus}
\omega(u;\delta):= \sup_{\substack{t,s \in [0,T] \\ |t-s| \leq \delta}} |u_t - u_s|.
\end{equation}

\begin{proposition}\label{p:exp_tight_R}
A sequence $\cP_\varepsilon \in \PP(C([0,T];\R))$ of probability measures is exponentially tight with speed $\beta_\varepsilon$ iff the following  two conditions hold:
\begin{itemize}
\item[(a)]  $\lim_{l \to \infty} \limsup_{\varepsilon \to 0} \beta_\varepsilon \log \cP_\varepsilon(u: \, |u_0| > l) = -\infty$;
\item[(b)] $\lim_{\delta \to 0} \limsup_{\varepsilon \to 0} \beta_\varepsilon \log \cP_\varepsilon(u: \,\omega(u,\delta) > \zeta) = -\infty, \;$ for any $\zeta > 0$.
\end{itemize}
\end{proposition}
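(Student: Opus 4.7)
The plan is to derive the equivalence from the Arzelà–Ascoli characterization of compact subsets of $C([0,T];\R)$, in analogy with the classical Prokhorov tightness criterion adapted to the exponential scaling. Recall that $K\subset C([0,T];\R)$ is relatively compact iff $\sup_{u\in K}|u_0|<\infty$ and $\lim_{\delta\to 0}\sup_{u\in K}\omega(u,\delta)=0$, and that exponential tightness is equivalent to producing, for every $\alpha>0$, a single compact $K_\alpha$ with $\limsup_\varepsilon\beta_\varepsilon\log\cP_\varepsilon(K_\alpha^c)\leq -\alpha$.

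For the necessity ($\Rightarrow$), I would extract from each compact $K_l$ appearing in the exponential tightness hypothesis its Arzelà–Ascoli data: a constant $M_l$ bounding the values at $t=0$, and a modulus $\delta_l(\zeta)>0$ for every $\zeta>0$ such that $\sup_{u\in K_l}\omega(u,\delta_l(\zeta))\leq \zeta$. The inclusions $\{u:|u_0|>M_l\}\subseteq K_l^c$ and $\{u:\omega(u,\delta)>\zeta\}\subseteq K_l^c$ for all $\delta\leq \delta_l(\zeta)$ (using monotonicity of $\omega(u,\cdot)$) transfer the exponential decay of $\cP_\varepsilon(K_l^c)$ to the events in (a) and (b); sending $l\to\infty$ then yields both conditions.

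For the sufficiency ($\Leftarrow$), fix $\alpha>0$. By (a), pick $M$ with $\limsup_\varepsilon\beta_\varepsilon\log\cP_\varepsilon(|u_0|>M)\leq -\alpha-1$; by (b), pick, for each $k\geq 1$, some $\delta_k\downarrow 0$ with $\limsup_\varepsilon\beta_\varepsilon\log\cP_\varepsilon(\omega(u,\delta_k)>1/k)\leq -(\alpha+k+1)$. Set
\[
K_\alpha := \bigl\{u\in C([0,T];\R) : |u_0|\leq M,\ \omega(u,\delta_k)\leq 1/k\ \forall k\geq 1\bigr\},
\]
which is closed and, by construction, has uniform modulus of continuity vanishing as $\delta\to 0$, hence is compact by Arzelà–Ascoli. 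A union bound yields
\[
\cP_\varepsilon(K_\alpha^c)\leq \cP_\varepsilon(|u_0|>M) + \sum_{k\geq 1}\cP_\varepsilon\bigl(\omega(u,\delta_k)>1/k\bigr),
\]
and the target is to show this quantity is at most $C\exp(-\alpha/\beta_\varepsilon)$ for $\varepsilon$ small, so that taking $\beta_\varepsilon\log$ and $\limsup_\varepsilon$ returns the required $-\alpha$.

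The main obstacle is controlling this countable sum: the $\limsup$ in (b) only gives, for each individual $k$, the bound $\cP_\varepsilon(\omega(u,\delta_k)>1/k)\leq \exp(-(\alpha+k)/\beta_\varepsilon)$ once $\varepsilon\leq \varepsilon_k$, with $\varepsilon_k$ possibly shrinking to $0$ as $k\to\infty$, so one cannot immediately bound all terms uniformly in $k$. I would circumvent this with a diagonal refinement: using monotonicity of $\omega(u,\delta)$ in $\delta$ together with the observation that, for each fixed $\varepsilon$, $\cP_\varepsilon(\omega(u,\delta)>1/k)\to 0$ as $\delta\downarrow 0$ by path-continuity and dominated convergence, one replaces each $\delta_k$ by a smaller $\tilde\delta_k\leq \delta_k$ chosen so that the per-$k$ estimate holds on a common $\varepsilon$-neighbourhood of $0$ independent of $k$. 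The series is then dominated by $\exp(-\alpha/\beta_\varepsilon)\sum_{k\geq 1}\exp(-k/\beta_\varepsilon)$, a convergent geometric sum whose $\beta_\varepsilon\log$ converges to $-\alpha$, and exponential tightness of $\{\cP_\varepsilon\}$ follows.
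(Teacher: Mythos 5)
Your proof is correct and is precisely the standard argument (the exponential analogue of Billingsley's Theorem~7.3) that the paper leaves implicit by citing the reference: Arzel\`a--Ascoli in both directions, with the countable union bound repaired by shrinking each $\delta_k$ to $\tilde\delta_k$ using the monotonicity of $\omega(u,\cdot)$ together with continuity from above of each fixed $\cP_\varepsilon$, after which the geometric series $\sum_k e^{-(\alpha+k)/\beta_\varepsilon}$ gives the required decay. Note only that this repair relies on the index set being a genuine sequence tending to $0$ (so that for each $k$ only finitely many indices fall outside the good $\varepsilon$-range and need individual adjustment), which is how the proposition is phrased.
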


\subsection{Stochastic currents and continuity equation}

The notion of (Stratonovich) stochastic currents were discussed in \cite{flandoli2005stochastic} with the attempt to investigate the links between deterministic currents and the theory of rough paths.
One of the main interest of the paper is the pathwise regularity of stochastic integrals of the form:
\begin{equation}
J(\eta) = \int_0^T \eta(t,X_t) \circ \d X_t,
\end{equation} 
where $\eta: Q \to \R^d$ is a compactly supported smooth vector field and $X_t$ is a semimartingale.
When $\eta$ does not depend on time, the authors  in \cite{flandoli2005stochastic} showed that the map $\eta \mapsto J(\eta)$ defines with probability one a linear functional on $H^{s+1}(\R^d; \R^d)$ for $s > d/2+1$. 
The extension of this result to the time dependent case is the content of the following theorem. 
Let us use the notation $\hH^s:= H^{s_1}([0,T]; H^{s_2}(\R^d;\R^d))$, where $s = (s_1,s_2)$.
\begin{theorem}\label{t:reg_stoch_current}
Let $X_t = V_t + M_t$ be a semimartingale with values in $\R^d$ and $\eta: Q\to \R^d$ be a smooth function with compact support.
Then, given $s_1 \in (1/2,1)$ and $s_2 \in (\frac{d+2}{2}, +\infty)$,  the map $\eta \mapsto J(\eta)$ has a pathwise realization $\cJ$ such that $[\cJ(\omega)](\eta) =  [J(\eta)](\omega)$ and 
\begin{equation}
\cJ(\omega) \in \hH^{-s} \qquad \P\text{-a.s.}
\end{equation} 
\end{theorem}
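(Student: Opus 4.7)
The plan is to adapt the Fourier–analytic argument of \cite{flandoli2005stochastic} (treating the time-independent case) to the space-time Hilbert scale $\hH^s$. The target is the a priori moment bound $\E\,\|\cJ\|_{\hH^{-s}}^2<+\infty$, which upgrades the a.s.~existence of a pathwise realization by a standard measurability/separability argument.

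As a first step I would split $J(\eta)$ according to the semimartingale decomposition $X_t=V_t+M_t$ and convert Stratonovich to It\^o:
\[
J(\eta)=\int_0^T\!\eta(t,X_t)\cdot dV_t+\int_0^T\!\eta(t,X_t)\cdot dM_t+\tfrac12\!\int_0^T\!\sum_{i,j}\partial_j\eta^i(t,X_t)\,d\langle M^i,M^j\rangle_t.
\]
The drift piece is a pathwise Stieltjes integral bounded by $\|\eta\|_\infty$ times the total variation of $V$; the trace piece is controlled by $\|\nabla_x\eta\|_\infty$ and the quadratic variation; the only genuinely rough contribution, and the one requiring a careful second-moment analysis, is the It\^o martingale term.

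Next, use the spatial Fourier representation $\eta(t,x)=(2\pi)^{-d/2}\int\hat\eta(t,\xi)e^{\mathrm{i}\xi\cdot x}d\xi$ and introduce, for each $\xi\in\R^d$, the $\C^d$-valued process
\[
Z_t(\xi):=\int_0^t e^{\mathrm{i}\xi\cdot X_u}\circ dX_u,
\]
so that $J(\eta)=(2\pi)^{-d/2}\!\int_{\R^d}\!\int_0^T\hat\eta(t,\xi)\cdot dZ_t(\xi)\,d\xi$ (which is rigorous after an approximation by smooth cutoffs of $\hat\eta$). Representing the $\hH^{-s}$-norm dually through the spatial weight $(1+|\xi|^2)^{-s_2}$ and the fractional norm in time, one is led to an estimate of the form
\[
\E\,\|\cJ\|_{\hH^{-s}}^2\,\lesssim\,\int_{\R^d}(1+|\xi|^2)^{-s_2}\,\mathcal{K}_{s_1}(\xi)\,d\xi,
\]
where $\mathcal{K}_{s_1}(\xi)$ encodes the second moment of $Z_\cdot(\xi)$ in $H^{s_1}(0,T)$. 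For the martingale part, the It\^o isometry yields $\E|Z_t(\xi)-Z_s(\xi)|^2\lesssim (1+|\xi|^2)\,|t-s|$, and a Kolmogorov-type interpolation in fractional Sobolev spaces gives $\mathcal{K}_{s_1}(\xi)\lesssim 1+|\xi|^2$ for every $s_1<1$; the drift and the trace pieces are smoother and handled analogously. Combining these bounds, convergence of the outer integral is secured precisely under $s_2>(d+2)/2$.

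From $\E\,\|\cJ\|_{\hH^{-s}}^2<+\infty$ one obtains a pathwise version $\cJ(\omega)\in\hH^{-s}$ almost surely; the identification $[\cJ(\omega)](\eta)=[J(\eta)](\omega)$ is first established on a countable dense set of test functions in $\hH^s$ and then extended by continuity via the a priori bound. The main obstacle I foresee is the time regularity: while the spatial estimate reduces to an elementary Fourier integral with polynomial weight, controlling the $H^{s_1}$-regularity of $Z_\cdot(\xi)$ uniformly in $\xi$ requires balancing the $(1+|\xi|^2)$ factor produced by the It\^o isometry against the spatial decay, and this is exactly what forces the sharper constraint $s_2>(d+2)/2$ instead of the naive Sobolev threshold $s_2>d/2$, together with the upper limit $s_1<1$.
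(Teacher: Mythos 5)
Your overall strategy coincides with the paper's: both arguments pass to a Fourier representation of the current, reduce matters to the second--moment bound $\E|Z(k)|^2\lesssim 1+|k|^2$ for the Fourier modes of $\int_0^T e^{\mathrm i k\cdot X}\circ \d X$ (the martingale part by the It\^o isometry, the drift and bracket corrections by total/quadratic variation, the latter producing the extra factor $|k|^2$), and then upgrade the resulting a priori bound $\E\|\cJ\|^2_{\shH^{-s}}<+\infty$ to a pathwise realization by density and a duality/Riesz argument. The constraint $s_2>(d+2)/2$ arises in both for the same reason: integrability of $(1+|k|^2)^{-s_2+1}$ over $\R^d$.

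The one step that, as written, would fail is your treatment of the time variable. The paper expands $\eta$ in a full space--time basis $e^m_{n,k}(t,x)\propto\cos(n\pi t/T)\,e^{\mathrm i k\cdot x}$, so each coefficient $Z^m_n(k)$ is a single scalar stochastic integral over $[0,T]$ and the hypothesis $s_1>1/2$ enters only through summability of the temporal weight $\sum_n(1+n^2)^{-s_1}$. You instead keep $t$ continuous and assert that $\mathcal K_{s_1}(\xi)$, the second moment of $Z_\cdot(\xi)$ in $H^{s_1}(0,T)$, is finite (and $\lesssim 1+|\xi|^2$) for every $s_1<1$. Taken literally this is false: already for $X$ a Brownian motion one has $\E|Z_t(\xi)-Z_s(\xi)|^2\asymp|t-s|$, so the Slobodeckij seminorm $\iint |t-s|^{-1-2s_1}\,\E|Z_t(\xi)-Z_s(\xi)|^2\,\d t\,\d s$ diverges for every $s_1\ge 1/2$; martingale paths do not belong to $H^{1/2}(0,T)$. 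The correct bookkeeping is as follows: since $\eta$ is compactly supported in $Q=(0,T)\times\R^d$, an integration by parts in time gives $\int_0^T\hat\eta(t,\xi)\cdot \d Z_t(\xi)=-\int_0^T\partial_t\hat\eta(t,\xi)\cdot Z_t(\xi)\,\d t$, i.e.\ $J$ acts as $-\partial_t Z$ paired against $\hat\eta$. Hence $J\in H^{-s_1}$ in time requires $Z_\cdot(\xi)\in H^{1-s_1}(0,T)$, and the condition $1-s_1<1/2$ is exactly the stated hypothesis $s_1>1/2$ (the restriction $s_1<1$ plays no role in this estimate). With this correction your increment bound $\E|Z_t(\xi)-Z_s(\xi)|^2\lesssim(1+|\xi|^2)|t-s|$ does yield $\E\|Z_\cdot(\xi)\|^2_{H^{1-s_1}(0,T)}\lesssim 1+|\xi|^2$, and the rest of your argument closes, giving a proof equivalent to the paper's.
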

\begin{proof}
see Appendix.
\end{proof}
Let now $(\mu_t)_{t \in [0,T]}$ be a family of probability measures satisfying the continuity equation
\begin{equation}\label{ec}
\partial_t \mu_t + \nabla \cdot J  = 0,
\end{equation}
where the term $J$ represents the current.
Here we collect some properties of solutions to the above equation, emphasizing the link with the regularity of $J$.
Let us start with a general definition.
\begin{definition}\label{def:continuity:distrib}
Let $(\mu_t)_{t \in (0,T)}$ be a family of probability measures on $\R^d$ and $J \in \hH^{-s}$ be a space-time distribution with $s=(s_1,s_2) $, $s_1 >1/2, s_2 >d/2$.
We say the $(\mu, J)$ is a distributional solution to \eqref{ec} if 
\begin{equation}
\int_0^T \int_{\R^d} \partial_t \phi(t,x) \d\mu_t(x) \d t + J(\nabla \phi)  = 0,
\end{equation}
for every $\phi \in C^\infty_c(Q)$. 
\end{definition}

We will show in Lemma \ref{l:constraint_N} that the empirical measure and the stochastic current associated to the particles system \eqref{eq:state_intro} exactly fits into this framework, thanks to the pathwise regularity shown in Theorem  \ref{t:reg_stoch_current}.

\begin{remark}\label{r:boundary_data}
To include initial ot final constraint, say $\mu_0, \mu_T$, in the definition of solution we can use test functions $\phi \in C^\infty_c([0,T] \times \R^d)$ so that
\begin{equation}
\int_{\R^d} \phi(T,x)\d\mu_T(x) - \int_{\R^d} \phi(0,x)\d\mu_0(x) =  \int_0^T \int_{\R^d} \partial_t \phi(t,x) \d\mu_t(x) \d t + J(\nabla \phi),
\end{equation}
for every $\phi \in C^\infty_c([0,T] \times \R^d)$.
\end{remark}

Let us now concentrate on a more specific situation.
Specifically, suppose there exists a Borel vector field $\vv: (t,x) \mapsto \vv_t(x) \in \R^d$ such that 
\begin{equation}\label{integr-v}
\int_0^T \int_{\R^d} |\vv_t(x)|^2 \d\mu_t(x) \d t < +\infty,
\end{equation} 
and $J$ can be written in the form 
\begin{equation}\label{vector_field}
J(\eta) = \int_0^T \int_{\R^d} \eta(t, x) \cdot \vv(t, x) \d \mu_t(x) \d t, \qquad \forall \, \eta \in C^\infty_c(Q; \R^d).
\end{equation} 
In this case the distributional formulation in Definition \ref{def:continuity:distrib} reads 
\begin{equation}\label{ec_v}
\int_0^T \int_{\R^d}\big( \partial_t \phi(t,x)  + \vv_t(x) \cdot \nabla_x \phi(t,x) \big) \d\mu_t(x) \d t = 0, \qquad \forall \,\phi \in C^\infty_c(Q).
\end{equation} 
and there is a by now classical connection between solutions to \eqref{ec_v} and absolutely continuous curves $\mu: [0,T] \to \PP_2(\R^d)$, see e.g. \cite[Thm.~8.3.1]{ambrosio2008gradient}.

More precisely, given a curve $t \mapsto \mu_t \in \operatorname{AC}^2([0,T]; \PP_2(\R^d))$ it is convenient to  define a space-time measure
$\tilde \mu:= \frac{1}{T}\int_0^T \delta_t \otimes \mu_t \, \d t \in \PP_2(Q)$ satisfying
\[ \int_Q h(t,x) \d \tilde \mu (t,x) = \frac{1}{T}\int_0^T \int_{\Rd} h(t,x) \d\mu_t(x) \d t, \qquad \forall h \in C_b(Q).\]
Then we can find a (minimal) Borel vector field $\vv \in L^2(Q, \tilde \mu; \Rd)$ (i.e. $\int_Q |\vv(t,x)|^2 \d \tilde \mu(t,x) < +\infty$) such that $J = \vv \tilde \mu \ll \tilde \mu$ is a vector measure and solves in a distributional sense 
\begin{equation}\label{ec_space-time}
\partial_t \tilde \mu + \nabla \cdot J = 0,
\end{equation}
which is equivalent to \eqref{ec_v}. 
On the other hand, when $\mu$ is a solution to \eqref{ec_v} with a velocity field satisfying \eqref{integr-v} then there exists a representative $t \mapsto \mu_t \in \PP_2(\Rd)$, still denoted with $\mu_t$, belongings to $ \operatorname{AC}^2([0,T]; \PP_2(\R^d))$.

\section{Statement of the problem and main result}

Consider $N$ particles $(x_1, \ldots, x_N)$ whose dynamics is described by the following system of SDEs:
\begin{equation}
  \label{eq:state}
\d x_i^{N,\varepsilon}(t)=\fF(x_i^{N,\varepsilon}(t),\xx^{N,\varepsilon}(t))\d t + \sqrt{\varepsilon}\d W_i(t) ,\quad i=1,\cdots, N,\qquad
  \xx^N(0)=\xx^N_0.
\end{equation}
where the map $\fF:\R^d\times (\R^d)^N\to \R^d$ models the mean field interaction and $(W_i(t))_{t \in [0,T]}$ are $d$-dimensional independent Brownian motions.
Given a time horizon $T > 0$, with a little abuse of notation we refer to the empirical measure as to 
\[\mu^N: C([0,T];(\R^d)^N) \to C([0,T];\PP(\R^d)), \qquad \mu^N[x](t) = \mu^N[x_t],  \]  
and we will denote by $\mu^{N,\varepsilon}_t$ the image measure associated to a solution of \eqref{eq:state} for every $t \in [0,T]$:
\begin{equation}\label{empirical:measure}
\mu^{N,\varepsilon}_t := \frac{1}{N}\sum_{i=1}^N \delta_{x^{N,\varepsilon}_i(t)}.
\end{equation}
By construction, $\mu^{N,\varepsilon}_t$ is a random measure for every $t \in [0,T]$. 
We denote by $\P^{N,\varepsilon}$ the law of the $N$-dimensional system $\P^{N,\varepsilon}:= \law(\xx^{N,\varepsilon}) = (\xx^{N,\varepsilon})_\sharp \P$ and by $\cP^{N,\varepsilon} \in \PP(C([0,T]; \PP(\R^d)))$ the measure $\cP^{N,\varepsilon}:= (\mu^N)_\sharp \P^{N,\varepsilon}$ induced by the empirical measure. 
The probability spaces we are dealing with are the following
\begin{equation}
(\Omega, \P)  \xrightarrow{(x^{N,\varepsilon})_\sharp} \left(  C([0,T]; (\R^d)^N), \P^{N,\varepsilon}\right) \xrightarrow{(\mu^N)_\sharp} \left(  C([0,T]; \PP(\R^d)), \cP^{N,\varepsilon}\right).  
\end{equation}

To complement the information contained in the random measures $( \mu^{N,\varepsilon}_t)_{t \in [0,T]}$ we introduce for every $\eta \in C^\infty_c(Q; \R^d)$ the stochastic current
\begin{equation}\label{stoch:current}
J^{N,\varepsilon}(\eta) := \frac{1}{N}\sum_{i=1}^N \int_0^T \eta(t,x_i^{N,\varepsilon}(t)) \circ \d x^{N,\varepsilon}_i(t).
\end{equation}
From the general theory on stochastic currents (see also Theorem \ref{t:reg_stoch_current}) the stochastic integral defined in 
\eqref{stoch:current} has a pathwise realization $\cJ^{N,\varepsilon}$, for every $N \in \N$ and $\varepsilon >0$. 
Moreover $\cJ^{N\varepsilon} \in \hH^{-s}$, for every $s = (s_1,s_2) \in (\frac{1}{2},1) \times (\frac{d+2}{2}, +\infty)$.
%In the following, writing $J^{N,\varepsilon}$ we also refer to the $\hH^{-s}$-valued random variable $\xx^{N,\varepsilon} \mapsto J^{N,\varepsilon}$.

The objective of the paper is to investigate the behaviour of the system \eqref{eq:state} as the number of particle tends to infinity and, simultaneously,  in the small-noise regime. 
More precisely, denoting by $\cX$ the following space 
\[\cX: = C([0,T];\PP_1(\R^d)) \times H^{-s_1}\left((0,T); H^{-s_2}(\R^d;\R^d)\right),\]
we are interested in large deviations properties in the joint limit $\varepsilon \downarrow 0, N \uparrow +\infty$ for the probability measures $\P^{N,\varepsilon} \circ (\mu^N, J^{N,\varepsilon})^{-1} \in \PP(\cX)$.
We endow $C([0,T];\P_1P(\R^d))$ with the uniform 1-Wasserstein topology and $H^{-s_1}\left((0,T); H^{-s_2}(\R^d;\R^d)\right)$ with the weak-topology.
Precisely, a sequence $(\mu^n, J^n)$ converges to $(\mu,J)$ in $\cX$ when
\[\lim_{n \to +\infty} \sup_{t \in [0,T]} W_1(\mu^n_t,\mu_t) = 0, \qquad \text{ and } \qquad  J^n(\eta) \to J(\eta), \; \forall \, \eta \in C_c^\infty(Q;\Rd). \] 
Notice that $\cX$ equipped with the above topology is not metrizable, hence not a Polish space. 
Nonetheless, to state a large deviation principle it is enough to have a Hausdorff topological space. 
In our setting $\cX$ is actually a Tychonoff space with metrizable compacts ($\hH^s$ is indeed separable and reflexive).

\smallskip

In the following, to emphasize the differences in obtaining the lower and the upper bounds in the LDP, we prefer to keep the hypotheses separated.
The entire LDP holds a fortiori under the stronger assumptions of the lower bound.  
The first set of assumptions on the interaction field $F$ and on the set of initial conditions are the following
\begin{hypothesis}[Interaction (upper bound)]\label{h1}
The function $\fF:\R^d\times \PP_1(\R^d)\to \R^d$ is continuous and there exists a constant $A\geq 0$ such that
\begin{equation} 
|\fF(x,\mu)|\le A\left(1 + |x|+\int_{\Rd} |x|\,\d\mu(x)\right).
\end{equation}
When $\fF:\R^d\times \PP^N(\R^d)\to \R^d$ we can alternatively consider it as a symmetric function $\fF: \R^d\times (\R^d)^N\to \R^d$, thanks to the identification \eqref{eq:4}. 
\end{hypothesis}

\begin{hypothesis}[Initial data (upper bound)]\label{h2}
The initial distribution $\mu_0^N: = \frac{1}{N}\sum_{i=1}^N \delta_{x_i^N(0)}$ is deterministic for every $N \in \N$ and  
$\sup_{N \in \N} \int_{\R^d} |x|^2 \d\mu^N_0 < +\infty$. Moreover there exists $\mu_0 \in \PP_1(\R^d)$  and $W_1(\mu_0^N, \mu_0) \to 0$ as $N \uparrow +\infty$.
\end{hypothesis}
\noindent Hypothesis \ref{h2} coincides with the law of large numbers for the deterministic initial conditions, necessary for the convergence of the empirical measure associated to the system. 

The second set of assumptions is as follows 

\begin{hypothesis}[Interaction (lower bound)]\label{h.lip}
Suppose there exists $K > 0$ such that for every $t \in [0,T]$, $x,x' \in \R^d$, $\mu, \mu' \in \PP_1(\R^d)$ we have
\begin{equation}
\left| \fF(x,\mu) - \fF(x', \mu') \right| \leq L \left(  |x-x'| + W_1(\mu,\mu') \right).
\end{equation}
When $\fF:\R^d\times \PP^N(\R^d)\to \R^d$ we can alternatively consider it as a symmetric function $\fF: \R^d\times (\R^d)^N\to \R^d$, thanks to the identification \eqref{eq:4}. 
\end{hypothesis}

\begin{hypothesis}[Initial data (lower bound)]\label{h.comp}
The initial distribution $\mu_0^N: = \frac{1}{N}\sum_{i=1}^N \delta_{x_i(0)}$ is deterministic for every $N \in \N$ with uniformly compact support. Moreover $\mu_0 \in \PP_1(\R^d)$ and $W_1(\mu_0^N, \mu_0) \to 0$ as $N \uparrow +\infty$.
\end{hypothesis}
The Lipschitz character of the interaction term in Hypothesis \ref{h.lip} provides uniqueness of solutions to \eqref{eq:state} (see also Theorem \ref{ex_uniq_vlasov} in the Appendix for what concerns the continuity equation) and it is crucial in the proofs of Propositions \ref{p:wasserstein_esitmate} and \ref{p:diff_current_esitmate}.
On the other hand, the compact support of the initial conditions is not directly used but it is needed to profitably apply the convergence result in \cite{fornasier2018mean}. 

\begin{remark} (An example of interaction):
Given a continuous function $H:\R^d\to \R^d$ satisfying
  \[
   |H(x) - H(y)| \leq L_H |x-y|, \quad \forall \, x,y \in\R^d
  \]
   for some $L_H > 0$, we can set
  \begin{equation}
    \fF(x,\yy):=\frac 1N\sum_{j=1}^N H(x-y_j)=
    \int_{\R^d} H(x-y)\,\d\mu[\yy](y)
  \end{equation}
  and
   \begin{equation}
    \label{eq:9bis}
    \fF(x,\mu):= \int_{\R^d} H(x-y)\,\d\mu(y).
  \end{equation}
More specifically, if  $H=-\nabla W$ for an even function $W\in C^1(\R^d)$, the system \eqref{eq:state} can be viewed as a stochastic perturbation of  the gradient flow of the interaction energy $\cW:(\R^d)^N\to\R$ defined by
  \begin{equation}
    \cW(\xx):=\frac 1{N^2}\sum_{i \leq j}^NW(x_i-x_j)
  \end{equation}
  with respect to the norm $\|\xx\|^2=\frac 1N\sum_{i=1}^N|x_i|^2$.
  \end{remark}

A first result on the convergence of the finite dimensional stochastic system \eqref{eq:state} toward  a purely deterministic  evolution of measures is contained in the following 

\begin{theorem}\label{t:Limit}
If Hypotheses \ref{h.lip} and \ref{h.comp} hold, there exists a unique strong solution $\xx^{N,\varepsilon}$ to the system \eqref{eq:state}.
The associated empirical measure and stochastic current $(\mu^{N,\varepsilon}, J^{N,\varepsilon})$ admits a limit $(\mu,J)$ as $N \uparrow +\infty$ and $\varepsilon \downarrow 0$ in the following sense
\begin{equation}
\begin{split}
\lim_{\substack{\varepsilon \to 0\\ N \to +\infty}} \P^{N, \varepsilon} \left( \sup_{t \in [0,T]} W_1(\mu^{N,\varepsilon}_t, \mu_t) > \delta \right) &=0 \\
\lim_{\substack{\varepsilon \to 0\\ N \to +\infty}} \P^{N, \varepsilon} \left( \left| J^{N,\varepsilon}(\eta) - J(\eta) \right|^2 > \delta \right) &=0, \qquad \forall \; \eta \in C_c^{\infty}(Q; \R^d),
\end{split}
\end{equation}
where the pair $(\mu,J)$ is the unique distributional solution to 
\begin{equation}
\partial_t \mu_t + \nabla \cdot J_t = 0, \qquad J_t = \fF(\cdot,\mu_t) \mu_t, \qquad \mu(0,\cdot) = \mu_0(\cdot).
\end{equation} 
\end{theorem}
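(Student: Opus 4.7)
I would proceed in three stages: strong well-posedness via Lipschitz SDE theory, convergence of the empirical measure by a synchronous coupling with the Vlasov characteristic flow, and passage to the limit for the stochastic current via an Itô--Stratonovich decomposition.

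Under Hypothesis~\ref{h.lip} the map $\yy\mapsto(\fF(y_i,\mu^N[\yy]))_{i=1}^N$ is globally Lipschitz on $(\R^d)^N$, thanks to the bound $W_1(\mu^N[\yy],\mu^N[\yy'])\le \tfrac 1N\sum_i|y_i-y_i'|$, so classical SDE theory yields a unique strong solution $\xx^{N,\varepsilon}$. For the limit I would invoke Theorem~\ref{ex_uniq_vlasov} to obtain the unique $\mu\in C([0,T];\PP_1(\R^d))$ solving the Vlasov PDE with datum $\mu_0$, and introduce its characteristic flow $\phi_t$, defined by $\dot\phi_t(x)=\fF(\phi_t(x),\mu_t)$ with $\phi_0=\mathrm{id}$, which is Lipschitz by Gronwall and satisfies $\mu_t=(\phi_t)_\sharp\mu_0$. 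Setting the deterministic comparison particles $\bar x_i(t):=\phi_t(x_i^N(0))$ and $\bar\mu^N_t:=(\phi_t)_\sharp\mu_0^N$, Hypothesis~\ref{h.comp} gives $\sup_t W_1(\bar\mu^N_t,\mu_t)\to 0$.

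Subtracting the equations for $x_i^{N,\varepsilon}$ and $\bar x_i$, squaring, applying Cauchy--Schwarz together with the Lipschitz bound on $\fF$, using the diagonal coupling inequality $W_1(\mu_s^{N,\varepsilon},\bar\mu_s^N)^2\le \tfrac 1N\sum_i|x_i^{N,\varepsilon}(s)-\bar x_i(s)|^2$, averaging over $i$, and applying Gronwall together with Doob's maximal inequality on the Brownian paths (giving $\mathbb E\sup_t\tfrac 1N\sum_i|W_i(t)|^2\le 4dT$), I expect an estimate of the form
\begin{equation*}
\mathbb E\sup_{t\in[0,T]}\frac 1N\sum_i|x_i^{N,\varepsilon}(t)-\bar x_i(t)|^2 \ \le\ C(T,L)\Bigl[\varepsilon \;+\; \sup_t W_1(\bar\mu^N_t,\mu_t)^2\Bigr],
\end{equation*}
which forces $\sup_t W_1(\mu_t^{N,\varepsilon},\mu_t)\to 0$ in probability in the joint limit, the two contributions vanishing independently.

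For the current, converting Stratonovich to Itô decomposes $J^{N,\varepsilon}(\eta)$ into a drift term $\int_0^T\langle\mu_t^{N,\varepsilon},\eta(t,\cdot)\cdot\fF(\cdot,\mu_t^{N,\varepsilon})\rangle\,\d t$, a martingale term $\tfrac{\sqrt\varepsilon}{N}\sum_i\int_0^T\eta(t,x_i^{N,\varepsilon})\cdot\d W_i$, and an $O(\varepsilon)$ Itô--Stratonovich correction proportional to $\dive_x\eta$. The drift converges to $J(\eta):=\int_0^T\langle\mu_t,\eta\cdot\fF(\cdot,\mu_t)\rangle\,\d t$ by the previous step, continuity of $\fF$ and smoothness of $\eta$; the martingale has variance $\lesssim \varepsilon\|\eta\|_\infty^2/N$, and the correction is $O(\varepsilon\|\dive\eta\|_\infty)$. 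The limit pair $(\mu,J)$ with $J_t=\fF(\cdot,\mu_t)\mu_t$ solves the continuity equation with datum $\mu_0$, uniqueness being Theorem~\ref{ex_uniq_vlasov}. The main obstacle is precisely keeping the joint limit free of any coupling rate between $\varepsilon$ and $N$: this forces a clean additive decomposition of the coupling estimate into an $\varepsilon$-piece (absorbed by Doob on the noise) and a mean-field piece (absorbed by $W_1(\mu_0^N,\mu_0)\to 0$), which is exactly what the synchronous coupling against the deterministic flow $\phi_t$ makes possible.
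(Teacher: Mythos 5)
Your proposal is correct, but it takes a genuinely different route from the paper. The paper proves the theorem by reusing machinery built for the large deviations lower bound: it applies Propositions \ref{p:wasserstein_esitmate} and \ref{p:diff_current_esitmate} with the trivial tilt $\hh^N=0$, which couple the stochastic system to the \emph{interacting} deterministic ODE system $\dot y_i^N=\fF(y_i^N,\yy^N)$ with empirical measure $\sigma^N$ and current $\nu^N$, and then disposes of the remaining deterministic mean-field limit $(\sigma^N,\nu^N)\to(\mu,J)$ by the compactness-and-identification argument of Theorem \ref{ex_uniq_vlasov} (part (i), steps 3--4) together with uniqueness. You instead couple synchronously against the \emph{non-interacting} characteristics $\bar x_i(t)=\phi_t(x_i^N(0))$ of the limit velocity field $\fF(\cdot,\mu_t)$, so that the deterministic piece of the error is simply $W_1((\phi_t)_\sharp\mu_0^N,(\phi_t)_\sharp\mu_0)\le \mathrm{Lip}(\phi_t)\,W_1(\mu_0^N,\mu_0)$ and no compactness step is needed; this is the Sznitman-style propagation-of-chaos coupling. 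Both arguments yield the same additive $\varepsilon$-plus-mean-field decomposition and hence the same joint limit free of any rate coupling between $\varepsilon$ and $N$. What the paper's choice buys is economy (the two propositions are needed anyway for the lower bound, with general $\hh^N$); what yours buys is a more self-contained and arguably more elementary derivation of this particular law of large numbers, at the price of having to construct and control the flow $\phi_t$ of the limit equation first (which is available here via Theorem \ref{ex_uniq_vlasov} and Hypothesis \ref{h.lip}). Your treatment of the current (It\^o--Stratonovich decomposition, $O(\varepsilon/N)$ martingale variance, $O(\varepsilon)$ correction, and convergence of the drift using the uniform-in-$x$ Lipschitz dependence of $\fF$ on the measure over the compact support of $\eta$) matches the substance of Proposition \ref{p:diff_current_esitmate} and is sound.
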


Here we are interested in a more precise asymptotic analysis of the behaviour of the pair $(\mu^{N,\varepsilon}, J^{N,\varepsilon})$.
The main result of the paper describes the rate at which the probability of rare events occurs and it is formulated as a LDP for the probability measures $\P^{N,\varepsilon} \circ (\mu^N, J^{N,\varepsilon})^{-1} \in \PP(\cX)$ as $N \uparrow +\infty$ and $\varepsilon \downarrow 0$. Recall that we use the notation $\tilde \mu$ for the space-time measure $\tilde \mu:= \int_0^T \delta_t \otimes \mu_t \, \d t$.

\begin{theorem}[LDP]\label{t:LDP}
Let $I: \cX \to [0,+\infty]$ be the functional (given in a variational formulation)
\begin{equation}
\begin{split}
I(\mu,J)  = \sup_{\eta \in C^{\infty}_c(Q;\R^d)} \Big\lbrace  &J(\eta)  - \int_0^T \langle \mu_t, \eta(t, \cdot) \cdot \fF(\cdot, \mu_t) \rangle \d t \\
& - \frac{1}{2}\int_0^T \langle \mu_t, \left| \eta(t,\cdot)  \right|^2 \rangle \d t \quad \Big| \;\partial_t \mu_t  + \nabla \cdot J = 0; \;\;  \mu(0)=\mu_0 \Big\rbrace
\end{split} 
 \end{equation}
 and define $\cX_l:= \lbrace (\mu,J) \in \cX: \int_{\R^d}|x|^2 \d\mu_0(x) \leq l \rbrace$.
Then
\begin{itemize}
\item[(i)] If Hypothesis \ref{h1} holds the rate functional $I: \cX_l \to [0,+\infty]$ is good. Moreover, when $I (\mu,J)<+\infty$ there exists $\hh \in L^2(Q,\tilde \mu;\R^d)$ such that $J = \hh \tilde \mu$, 
\begin{equation}
I(\mu,J) = \frac{1}{2} \int_0^T \langle \mu_t, |\hh(t,\cdot)|^2 \rangle \d t
\end{equation}
and the pair $(\mu,\hh)$ satisfies  $\partial_t \mu_t + \dive \left( \fF(\cdot, \mu_t) + \hh \right)\mu_t= 0$ with $\mu(0)=\mu_0$, in a distributional sense.
\item[(ii)]  
Under Hypotheses \ref{h1} and \ref{h2}, the sequence of probability measures $\P^{N,\varepsilon} \circ (\mu^N, J^{N,\varepsilon})^{-1} \in \PP(\cX)$ satisfies a LD upper bound on $\cX$ with speed $\varepsilon/ N$
 and rate functional $I: \cX \to [0,+\infty]$: 
 
 for every closed set $\sfC \subset \cX$
 \begin{equation}
\limsup_{\substack{\varepsilon \to 0\\ N \to +\infty}} \frac{\varepsilon}{N} \log \P^{N,\varepsilon} ((\mu^{N}, J^{N,\varepsilon}) \in \sfC) \leq - \inf_{(\mu, J) \in \sfC} I(\mu, J).
\end{equation}
 \item[(iii)] If Hypotheses \ref{h.lip} and \ref{h.comp} hold, the sequence of probability measures $\P^{N,\varepsilon} \circ (\mu^N, J^{N,\varepsilon})^{-1} \in \PP(\cX)$ satisfies a LD lower bound on $\cX$ with speed $\varepsilon/ N$
 and rate functional $I: \cX \to [0,+\infty]$: 
 
 for every open set $\sfO \subset \cX$
 \begin{equation}
\liminf_{\substack{\varepsilon \to 0\\ N \to +\infty}} \frac{\varepsilon}{N} \log \P^{N,\varepsilon} ((\mu^{N}, J^{N,\varepsilon}) \in \sfO) \geq - \inf_{(\mu, J) \in \sfO} I(\mu, J).
\end{equation}
\end{itemize}
\end{theorem}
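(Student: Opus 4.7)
The rate functional $I$ is a supremum of continuous affine functionals on $\cX$ (for each fixed $\eta$, the two integrals in $\mu$ are continuous on $C([0,T];\PP_1(\R^d))$ thanks to the linear growth of $\fF$, and $J\mapsto J(\eta)$ is continuous in the weak $\hH^{-s}$ topology), hence $I$ is automatically lower semicontinuous. To extract the explicit form when $I(\mu,J)<+\infty$, I set $\eta=\lambda\zeta$ in the variational formula and optimize over $\lambda\in\R$: the quadratic structure forces the linear functional $\zeta\mapsto J(\zeta)-\int_Q\zeta\cdot\fF\,d\tilde\mu$ to be bounded by a constant times $\bigl(\int_Q|\zeta|^2\,d\tilde\mu\bigr)^{1/2}$. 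Riesz representation in $L^2(Q,\tilde\mu;\R^d)$ then produces $\hh$ with $J=(\fF+\hh)\tilde\mu$ and $I(\mu,J)=\tfrac12\int_Q|\hh|^2\,d\tilde\mu$, and the PDE constraint becomes the Vlasov continuity equation with velocity $\fF+\hh$. For goodness on $\cX_l\cap\{I\leq a\}$, a Gronwall argument applied to $t\mapsto\int|x|^2\,d\mu_t$, using the linear growth of $\fF$ and the $L^2(\tilde\mu)$-bound on $\hh$, yields a uniform second-moment bound; the standard $W_1$-equicontinuity estimate $W_1(\mu_s,\mu_t)^2\leq|t-s|\int_s^t\!\int|\fF+\hh|^2\,d\mu\,d\tau$ and Ascoli–Arzelà give compactness of the first factor, while $J=\hh\tilde\mu$ lies in a bounded set of a space compactly embedded in $\hH^{-s}$.

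\textbf{Plan for (ii): Upper bound.} I would first establish exponential tightness at speed $\varepsilon/N$. For the measure component, applying Proposition \ref{p:exp_tight_R} to the scalar processes $t\mapsto\int\varphi\,d\mu^{N,\varepsilon}_t$ with $\varphi$ having compact sublevels reduces matters to exponential moment bounds on $\sum_i\varphi(x_i^{N,\varepsilon})$, obtainable via Itô's formula, Hypothesis \ref{h1} and Novikov-type martingale estimates. For the current, combine Theorem \ref{t:reg_stoch_current} with analogous exponential control of the relevant Sobolev norm. The LD upper bound proper is then obtained by tilting: for each $\eta\in C^\infty_c(Q;\R^d)$, the Girsanov density for the system in which the drift is augmented by $\varepsilon\eta(t,x_i)$ has logarithm equal, after the $\varepsilon/N$ rescaling and up to an $O(\varepsilon/N)$ Itô-to-Stratonovich correction absorbed by the smoothness of $\eta$, to the integrand of the variational formula evaluated at $(\mu^{N,\varepsilon},J^{N,\varepsilon})$. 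Chebyshev on a compact $K\subset\cX$ then gives
\[\limsup_{\substack{\varepsilon\to 0\\ N\to+\infty}}\frac{\varepsilon}{N}\log\P^{N,\varepsilon}\bigl((\mu^N,J^{N,\varepsilon})\in K\bigr)\leq -\inf_{(\mu,J)\in K} L_\eta(\mu,J),\]
where $L_\eta$ denotes the $\eta$-slice of $I$. A Sion-type minimax exchange of $\sup_\eta$ and $\inf_K$ (justified by the convexity/concavity structure and compactness of $K$) produces the upper bound on compacts, which is extended to closed sets by the exponential tightness step.

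\textbf{Plan for (iii): Lower bound.} The strategy is to invoke the $\Gamma$-convergence characterization of LD lower bounds in Theorem \ref{t.lb_entropia}: it suffices, for each $(\mu,\hh)$ with $I(\mu,\hh)<+\infty$, to build tilted laws $\tilde\P^{N,\varepsilon}$ such that the tilted empirical measure and current concentrate on $(\mu,J)$ and $(\varepsilon/N)\,\ent(\tilde\P^{N,\varepsilon}\,|\,\P^{N,\varepsilon})\to I(\mu,\hh)$. I would construct $\tilde\P^{N,\varepsilon}$ by Girsanov, augmenting the drift of the $i$-th particle by $\hh^{N,\varepsilon}(t,x_i)$, where $\hh^{N,\varepsilon}$ is a regularization of controls obtained from the deterministic $\Gamma$-convergence result of \cite{fornasier2018mean} applied to the problem of minimizing $\tfrac12\int|\hh|^2\,d\tilde\mu$ under the Vlasov continuity equation constraint. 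That result produces an ODE$^N$-controlled system whose empirical measure recovers $\mu$ and whose $L^2$-cost converges to $I(\mu,\hh)$. The Lipschitz Hypothesis \ref{h.lip} combined with the compact support of Hypothesis \ref{h.comp} then enables a Gronwall-based comparison showing that adding a $\sqrt{\varepsilon}$-small Brownian perturbation does not disturb the limit of either the empirical measure (in uniform $W_1$) or the stochastic current (weakly in $\hH^{-s}$), and a direct Girsanov computation together with \eqref{eq:ent_composiz.} transfers the deterministic quadratic cost into the required entropy bound.

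\textbf{Main obstacle.} The hard part is unquestionably (iii), and specifically the simultaneous control in the joint limit of three interlocking objects: the weak $\hH^{-s}$-convergence of the tilted stochastic current $\tilde J^{N,\varepsilon}$ to $J=(\fF+\hh)\tilde\mu$ (where Theorem \ref{t:reg_stoch_current} is essential to absorb the Stratonovich noise contribution), the convergence of the tilted empirical measure, and the accurate evaluation of the Girsanov cost along the stochastic rather than the deterministic trajectories produced by \cite{fornasier2018mean}. Keeping all three errors of order $o(N/\varepsilon)$ at the LDP speed is the technical crux; this is precisely where the global Lipschitz Hypothesis \ref{h.lip} is indispensable, since without it the comparison between the deterministic recovery and its stochastic counterpart fails to close in Gronwall form.
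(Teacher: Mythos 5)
Your proposal follows the paper's route on all three parts: (i) is proved exactly as in Lemmata \ref{l:rate_h} and \ref{goodness} (optimization over the scalar multiple, Riesz representation in $L^2(Q,\tilde\mu;\R^d)$, then moment bounds, $W_1$-equicontinuity and Ascoli--Arzel\`a); (ii) is the exponential-martingale tilt plus the minmax lemma of Proposition \ref{p:upper_bound_compacts}, extended to closed sets by the exponential tightness of Theorem \ref{t:exp_tight}; (iii) is the entropy/$\Gamma$-convergence scheme of Theorem \ref{t.lb_entropia} combined with the recovery sequence of \cite{fornasier2018mean}, as in Theorem \ref{t.LDP_lb_1}.

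The one genuine divergence is in the construction of the tilted measure for the lower bound, and it is worth noting because it changes where the difficulty sits. You propose to augment the drift of the $i$-th particle by $\hh^{N,\varepsilon}(t,x_i)$, i.e.\ a \emph{closed-loop} control evaluated along the stochastic trajectories, and you correctly identify the evaluation of the Girsanov cost along those trajectories as the technical crux (since $\hh$ is only $L^2(\tilde\mu)$, this forces a regularization and then a uniform-integrability argument to pass the quadratic cost to the limit against the random empirical measures). The paper sidesteps this entirely: in \eqref{eq:tilted} the perturbation is $\hh^N(t,y_i^N(t))$, the control of Theorem \ref{t:recovery_det} evaluated along the \emph{deterministic} recovery trajectories $y_i^N$. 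The added drift is then a deterministic function of time for each particle, the Radon--Nikodym exponent has deterministic quadratic variation, and the rescaled entropy equals $\tfrac12\int_0^T\langle\sigma_t^N,|\hh^N(t,\cdot)|^2\rangle\,\d t$ exactly, so the entropy bound is handed over verbatim by Theorem \ref{t:recovery_det}(e) with no stochastic estimate at all; the only probabilistic work left is the concentration of $(\mu^{N,\varepsilon,h},J^{N,\varepsilon,h})$ on $(\sigma^N,\nu^N)$ via the Gronwall/Bernstein comparisons of Propositions \ref{p:wasserstein_esitmate} and \ref{p:diff_current_esitmate}. Your closed-loop variant is not wrong, but the open-loop choice is what makes the argument close cleanly, and you should adopt it (or supply the missing uniform-integrability step) before the entropy computation in your plan can be considered complete.
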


The proofs of the above theorems are postponed at the end of Section 5.

\begin{remark}
By the contraction principle, see e.g. \cite[Thm.~ 4.2.1]{dembo1998large}, we can derive a large deviation principle for the measures $\cP^{N,\varepsilon} \in \PP(C([0,T]; \PP(\R^d)))$ with rate functional $\bar I: C([0,T]; \PP(\R^d)) \to [0,+\infty]$ given by
\begin{equation}
\bar I(\mu) = \inf \lbrace I(\mu,J), J \in \hH^{-s} \rbrace.
\end{equation}
Notice that in the regime $\varepsilon >0$, given a measure $\mu^\varepsilon$ there exists a unique current $J^\varepsilon$ for which the McKean-Vlasov PDE is satisfied (thanks to the parabolic character of the equation). In the limit $\varepsilon \downarrow 0$ this is no longer true and the application of the contraction principle is not trivial.
Once more, this naturally requires the formulation of the above LDP for the pair measure-current.  
\end{remark}

\section{Large deviations upper bound}

This section is devoted to the analysis of the large deviation upper bound for the family of probability measures $\P^{N,\varepsilon} \circ (\mu^N, J^{N,\varepsilon})^{-1} \in \PP(\cX)$. 
Hereafter, given $(\mu,J) \in \cX$ we denote by $I: \cX \to [0,+\infty]$ the functional  
\begin{equation}\label{rate_variational}
\begin{split}
I(\mu,J)  = \sup_{\eta \in C^\infty_c(Q; \R^d)} \Big\lbrace  &J(\eta)  - \int_0^T \langle \mu_t, \eta(t, \cdot) \cdot \fF(\cdot, \mu_t) \rangle \d t \\
& - \frac{1}{2}\int_0^T \langle \mu_t, \left| \eta(t,\cdot)  \right|^2 \rangle \d t \quad \Big| \;\partial_t \mu_t  + \dive J = 0; \;\;  \mu(0)=\mu_0 \Big\rbrace,
\end{split}
\end{equation}
where the constraint has to be intended in a distributional sense (see Definition \ref{def:continuity:distrib}).
%The main result of this section is contained in the following
% 
%\begin{theorem}\label{t:upper_bound}
%Under Hypotheses \ref{h.lip}, \ref{h.comp}, for every closed set $\sfC \subset \cX$ it holds
%\begin{equation}\label{eq:limsup}
%\limsup_{\substack{\varepsilon \to 0\\ N \to +\infty}} \frac{\varepsilon}{N} \log \P^{N,\varepsilon} ((\mu^{N}, J^{N,\varepsilon}) \in \sfC) \leq - \inf_{(\mu, J) \in \sfC} I(\mu, J).
%\end{equation}
%Moreover, the rate functional $I: \cX \to [0,+\infty]$ is good.
%\end{theorem} 

Let us firstly investigate the relation between the empirical measure \eqref{empirical:measure} and the stochastic current  \eqref{stoch:current}.
Exploiting the independence of the brownian motions in the dynamic \eqref{eq:state} and applying Theorem \ref{t:reg_stoch_current} to the stochastic current $J^{N,\varepsilon}$ defined in \eqref{stoch:current} we get a pathwise realization $\cJ^{N,\varepsilon}$, for every $N \in \N$ and $\varepsilon >0$ with 
$\cJ^{N\varepsilon} \in \hH^{-s}$, $s = (s_1,s_2) \in (\frac{1}{2},1) \times (\frac{d+2}{2}, +\infty)$.

Furthermore, the the pair $(\mu^{N,\varepsilon}, J^{N,\varepsilon})$ satisfies a continuity equation as it is shown in the next Lemma.
\begin{lemma}\label{l:constraint_N}
Given $\mu^N_0 \in \PP(\R^d)$, the pair $(\mu^{N,\varepsilon}, J^{N, \varepsilon})$ defined in \eqref{empirical:measure}-\eqref{stoch:current} 
is a distributional solution in the sense of Definition \ref{def:continuity:distrib} (see also Remark \ref{r:boundary_data}) to the following 
\begin{equation}\label{eq:cont_discr}
\begin{system}
\partial_t \mu_t^{N,\varepsilon} + \nabla \cdot J^{N,\varepsilon}= 0, \qquad \P\text{-a.e.}, \\
\mu^{N,\varepsilon}(0) = \mu^N_0 
\end{system}
\end{equation}
for every $N \in \N$ and $\varepsilon >0$. 
\end{lemma}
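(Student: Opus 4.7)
The plan is to reduce the claim to a direct application of the Stratonovich chain rule applied pathwise to each particle and then sum over the index $i$. Fix a test function $\phi \in C^\infty_c(Q)$ (and, for the version of Remark \ref{r:boundary_data} that keeps the initial trace, $\phi \in C^\infty_c([0,T] \times \R^d)$). Since each $x_i^{N,\varepsilon}$ is a continuous semimartingale (its drift has paths in $C([0,T])$ by Hypothesis \ref{h1} and the diffusion is a scaled Brownian motion), the Itô-Stratonovich formula applied to $(t,x) \mapsto \phi(t,x)$ yields, $\P$-almost surely,
\begin{equation*}
\phi(T, x_i^{N,\varepsilon}(T)) - \phi(0, x_i^{N,\varepsilon}(0)) = \int_0^T \partial_t \phi(t, x_i^{N,\varepsilon}(t)) \, \d t + \int_0^T \nabla \phi(t, x_i^{N,\varepsilon}(t)) \circ \d x_i^{N,\varepsilon}(t).
\end{equation*}

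Summing this identity over $i = 1,\ldots,N$ and dividing by $N$, I would then recognise three terms on the right hand side via the definitions of $\mu^{N,\varepsilon}_t$ and $J^{N,\varepsilon}$: the left hand side equals $\langle \mu^{N,\varepsilon}_T, \phi(T,\cdot) \rangle - \langle \mu_0^N, \phi(0,\cdot) \rangle$, the time-derivative term is exactly $\int_0^T \langle \mu^{N,\varepsilon}_t, \partial_t \phi(t,\cdot) \rangle \, \d t$, and the Stratonovich sum is, by definition \eqref{stoch:current}, $J^{N,\varepsilon}(\nabla \phi)$. Rearranging gives precisely the distributional formulation in Definition \ref{def:continuity:distrib} (with the boundary terms as in Remark \ref{r:boundary_data}); for $\phi \in C^\infty_c(Q)$ the boundary contributions vanish. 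The initial condition $\mu^{N,\varepsilon}(0) = \mu^N_0$ holds by construction.

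The only subtle point is that $J^{N,\varepsilon}$ is promoted from a single random variable for each fixed $\eta$ to a genuine distribution-valued object $\cJ^{N,\varepsilon} \in \hH^{-s}$ via Theorem \ref{t:reg_stoch_current}; one should be sure that the pathwise realisation coincides $\P$-a.s.\ with the Stratonovich integral for every smooth compactly supported test field simultaneously. Since $C^\infty_c(Q;\R^d)$ is separable (in, say, $\hH^s$), this follows from standard denseness and continuity arguments, exactly as in the cited construction of $\cJ^{N,\varepsilon}$. I do not expect a genuine obstacle here: the argument is essentially tautological once one has both the Stratonovich calculus and the pathwise realisation at hand, and the whole proof reduces to writing out the chain rule and matching terms.
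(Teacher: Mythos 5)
Your argument is correct and coincides with the paper's own proof: the paper likewise applies the It\^o--Stratonovich chain rule to $\phi(t,x_i^{N,\varepsilon}(t))$, averages over $i$, identifies the resulting terms with $\mu^{N,\varepsilon}$ and $J^{N,\varepsilon}(\nabla\phi)$, and then invokes Theorem \ref{t:reg_stoch_current} for the $\P$-a.s.\ pathwise realization. Your closing remark on upgrading the null set to be uniform over test fields is exactly the role that theorem plays.
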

\begin{proof}
The proof follows by the application of It\^o's formula to the system \eqref{eq:state} with $\phi \in C_c^\infty([0,T) \times \R^d)$:
\begin{equation}
\phi(0, x_i^{N}(0)) +  \int_0^T \partial_t \phi(s, x^{N,\varepsilon}_i(s)) \d s  + \int_0^T \nabla \phi(s, x^{N,\varepsilon}_i(s)) \circ \d x^{N,\varepsilon}_i(s) = 0, 
\end{equation}
averaging with respect to $N$ we get 
\begin{equation}
\int_{\R^d} \phi(0)\d\mu_0^{N} + \int_0^T \int_{\R^d} \partial_s \phi(s,x) \d \mu^{N,\varepsilon}_s \d s  + J^{N,\varepsilon}\left( \nabla \phi \right) = 0.  
\end{equation}
From Theorem \ref{t:reg_stoch_current} we know that $J^{N,\varepsilon}$ has a pathwise realization $\cJ^{N,\varepsilon} \in \hH^{-s}$, where $s = (s_1,s_2) \in (\frac{1}{2},1) \times (\frac{d+2}{2}, +\infty)$.
Hence, equation \eqref{eq:cont_discr} holds for $\P$-a.e. $\omega \in \Omega$. 
\end{proof}

Let us now exhibit a direct proof of the LD upper bound for compact sets, via a specific exponential tilt of the measures $\left(\P^{N,\varepsilon}\right)_{N,\varepsilon}$. 
The general case (for closed sets) will be obtained exploiting the exponential tightness of the measures.

\begin{proposition}\label{p:upper_bound_compacts}
Under Hypotheses \ref{h1}, \ref{h2}, for every compact $\sfK \subset \cX$ it holds 
\begin{equation}\label{eq:limsup}
\limsup_{\substack{\varepsilon \to 0\\ N \to +\infty}} \frac{\varepsilon}{N} \log \P^{N,\varepsilon} ((\mu^{N}, J^{N,\varepsilon}) \in \sfK) \leq - \inf_{(\mu, J) \in \sfK} I(\mu, J).
\end{equation}
\end{proposition}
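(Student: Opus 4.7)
The proposed strategy is a direct Girsanov tilt of the law of the driving Brownian motions, chosen so that the resulting change of measure produces, in the exponent, exactly the candidate rate functional. For each test field $\eta\in C^\infty_c(Q;\R^d)$ introduce the nonnegative exponential martingale
\begin{equation*}
M^{N,\varepsilon,\eta}_T := \exp\!\left(\frac{1}{\sqrt{\varepsilon}}\sum_{i=1}^N\int_0^T \eta(t,x_i^{N,\varepsilon})\cdot \d W_i(t)-\frac{1}{2\varepsilon}\sum_{i=1}^N\int_0^T|\eta(t,x_i^{N,\varepsilon})|^2\,\d t\right);
\end{equation*}
boundedness of $\eta$ ensures Novikov's condition, so $\mathbb{E}[M^{N,\varepsilon,\eta}_T]=1$.

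The first computational step is to rewrite $M^{N,\varepsilon,\eta}_T$ in terms of the empirical measure and stochastic current. Using the SDE \eqref{eq:state} to substitute $\sqrt{\varepsilon}\,\d W_i=\d x_i^{N,\varepsilon}-\fF(x_i^{N,\varepsilon},\xx^{N,\varepsilon})\d t$, and converting from It\^o to Stratonovich integrals (which produces the usual divergence correction of order $\varepsilon/2$), one obtains
\begin{equation*}
\frac{\varepsilon}{N}\log M^{N,\varepsilon,\eta}_T = \Phi^\eta(\mu^{N,\varepsilon},J^{N,\varepsilon})-\frac{\varepsilon}{2}\int_0^T \langle \mu^{N,\varepsilon}_t,\dive \eta(t,\cdot)\rangle\,\d t,
\end{equation*}
where
\begin{equation*}
\Phi^\eta(\mu,J):= J(\eta)-\int_0^T\langle \mu_t,\eta(t,\cdot)\cdot\fF(\cdot,\mu_t)\rangle\,\d t-\tfrac12\int_0^T\langle\mu_t,|\eta(t,\cdot)|^2\rangle\,\d t.
\end{equation*}
The remainder term is bounded in absolute value by $\tfrac{\varepsilon T}{2}\|\dive \eta\|_\infty$, uniformly in $(\mu^{N,\varepsilon},J^{N,\varepsilon})$, hence negligible at the scale $\varepsilon/N$.

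A Chebyshev-type argument applied to $\mathbb{E}[M^{N,\varepsilon,\eta}_T\,\mathbf{1}_{\{(\mu^{N,\varepsilon},J^{N,\varepsilon})\in \sfK\}}]\le 1$ then gives, for every fixed $\eta$,
\begin{equation*}
\frac{\varepsilon}{N}\log \P^{N,\varepsilon}((\mu^{N,\varepsilon},J^{N,\varepsilon})\in \sfK)\le -\inf_{(\mu,J)\in \sfK}\Phi^\eta(\mu,J)+\frac{\varepsilon T}{2}\|\dive \eta\|_\infty,
\end{equation*}
and therefore $\limsup \tfrac{\varepsilon}{N}\log \P^{N,\varepsilon}(\sfK)\le -\inf_\sfK \Phi^\eta$. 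Lemma \ref{l:constraint_N} guarantees that the support of $\P^{N,\varepsilon}\circ(\mu^{N,\varepsilon},J^{N,\varepsilon})^{-1}$ lies in the (closed) set of distributional solutions of $\partial_t\mu+\dive J=0$ with $\mu(0)=\mu_0^N$; Hypothesis \ref{h2} pushes $\mu_0^N$ to $\mu_0$. On this constraint set one has $I(\mu,J)=\sup_\eta \Phi^\eta(\mu,J)$, so it remains to promote $\sup_\eta\inf_\sfK\Phi^\eta$ into $\inf_\sfK I$. This is achieved by a finite covering argument: fix $\delta>0$, choose for each $p\in\sfK$ a field $\eta_p$ such that $\Phi^{\eta_p}(p)\ge \min(I(p),1/\delta)-\delta$; exploit joint continuity of $(\mu,J)\mapsto \Phi^\eta(\mu,J)$ on $\cX$ (weak continuity in $J$, and $W_1$-continuity of the $\fF$-integrals, which follows from Hypothesis \ref{h1}) to produce an open neighbourhood $U_p$ on which $\Phi^{\eta_p}\ge \min(\inf_\sfK I,1/\delta)-2\delta$; extract a finite subcover $U_{p_1},\dots,U_{p_n}$ by compactness of $\sfK$; and use that the logarithmic asymptotics of a finite sum coincides with the maximum of its summands. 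Letting $\delta\downarrow 0$ yields \eqref{eq:limsup}.

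The main obstacle in this plan is precisely the final minimax/covering step. Its success depends on two nontrivial ingredients tailored to the precise topology of $\cX$: a uniform continuity estimate for $\Phi^\eta$ with $\mu$ carrying the uniform $W_1$-topology on $C([0,T];\PP_1(\R^d))$ and $J$ the weak $\hH^{-s}$-topology, and a truncation mechanism (the level $1/\delta$) to accommodate points of $\sfK$ where $I=+\infty$. Both ingredients are compatible with Hypothesis \ref{h1} alone, which is why the upper bound requires only the continuity and linear growth of $\fF$ and no Lipschitz or compact-support assumption.
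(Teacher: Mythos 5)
Your proposal is correct and follows essentially the same route as the paper: an exponential martingale tilt built from $\eta$, the bound $\E[M^{N,\varepsilon,\eta}_T\mathbf 1_{\sfK}]\le 1$, identification of $\tfrac{\varepsilon}{N}\log M^{N,\varepsilon,\eta}_T$ with $\Phi^\eta(\mu^{N,\varepsilon},J^{N,\varepsilon})$ up to an It\^o correction of order $\varepsilon$, and a minimax step on the compact $\sfK$. The only genuine divergence is in how the sup over $\eta$ is exchanged with the inf over $\sfK$ and how the continuity-equation constraint enters. The paper adds a second family of penalty functionals $I_2^\phi$, indexed by $\phi\in C^\infty_c([0,T)\times\R^d)$, which vanish on the empirical pair and whose supremum is $+\infty$ off the constraint set; it then invokes the abstract minimax lemma of Kipnis--Landim, using only lower semicontinuity of $I_1^\eta+I_2^\phi$. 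You instead restrict the probability to the support of the law (Lemma \ref{l:constraint_N}) and run the finite-covering argument by hand — which is precisely the proof of that minimax lemma, so nothing is lost. One point where your version needs a touch more care than you give it: a point $p\in\sfK$ that satisfies the PDE but has $\mu(0)\neq\mu_0$ has $I(p)=+\infty$ while $\sup_\eta\Phi^\eta(p)$ may be finite, and the constraint set you restrict to depends on $N$ through $\mu_0^N$; you need either the paper's boundary-term test functions $\phi$ (whose value on the empirical pair is $\langle\mu_0^N-\mu_0,\phi(0)\rangle\to 0$ by Hypothesis \ref{h2}) or a $\delta$-fattened closed constraint set $\{W_1(\mu(0),\mu_0)\le\delta\}$ followed by $\delta\downarrow 0$. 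This is a fixable bookkeeping issue — and the paper itself elides it by asserting $I_2^\phi(\mu^{N,\varepsilon},J^{N,\varepsilon})=0$ — but it should be made explicit in your covering step.
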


\begin{proof}
Given $\sfB \subseteq \cX$ a Borel set, we want to estimate the quantity $\P^{N,\varepsilon}((\mu^N,J^{N,\varepsilon}) \in \sfB)$.  
Fix $\eta \in C_c^\infty(Q; \R^d)$ and define the martingale	
\begin{equation}
M_t^\eta := \frac{\sqrt{\varepsilon}}{N} \sum_{1=1}^N \int_0^t \eta(s,x^{N,\varepsilon}_i(s)) \d W_i(s),
\end{equation}
with quadratic variation
\begin{equation}
[M^\eta]_t  = \frac{\varepsilon}{N} \int_0^t \langle \mu^{N,\varepsilon}_s, |\eta(s,\cdot)|^2 \rangle \d s.
\end{equation}
Then, the corresponding stochastic exponential
$\exp(M_t^\eta - \frac{1}{2} [M^\eta]_t)$
is a martingale with 
\begin{equation}\label{eq:supermart}
\E^{N,\varepsilon}\left[ \exp\left(M_t^\eta - \frac{1}{2} [M^\eta]_t\right)\right] \leq 1, \qquad \forall \, t \in [0,T]
\end{equation} 
Moreover, using the definition of the stochastic current given in \eqref{stoch:current}, we can write
\begin{equation}\label{eq:rel_M-J}
M_T^\eta = J^{N,\varepsilon}(\eta) - \int_0^T \langle \mu^{N,\varepsilon}_s, \eta(s,\cdot) \cdot \fF(\cdot, \mu^{N,\varepsilon}_s) \rangle \ ds + \varepsilon R^{N,\varepsilon},
\end{equation}
where we denoted by $R^{N,\varepsilon}:= \int^T_0 \langle \mu^{N,\varepsilon}_s, \dive(\eta)(s, \cdot) \rangle \d s$ the It\^o correction term.

%Let us now denote by $\P^{N,\varepsilon}_\eta$ the tilted measures $\d \P^{N,\varepsilon}_\eta : = \exp\left( M^\eta_T - \frac{1}{2}\left[ M^\eta \right]_T \right) \d \P^{N,\varepsilon}$ and by $\E_\eta^{N,\varepsilon}$ the associated expected value.
Now, for any $\eta \in C_c^\infty(Q; \R^d)$ and $\phi \in C_c^\infty([0,T] \times \R^d)$ we define the maps $I^{\eta}_1, I^\phi_2: \cX \to [0,+\infty]$ as
\begin{equation*}
\begin{split}
I^{\eta}_1(\mu,J) &:= J(\eta) - \int_0^T \langle \mu_s, \eta(s,\cdot)\cdot \fF(\cdot,\mu_s) \rangle \d s - \frac{1}{2}\int_0^T \langle \mu_s, |\eta(s,\cdot)|^2 \rangle \d s, \\
I^\phi_2(\mu,J) &:= \langle \mu(T),\phi(T) \rangle - \langle \mu_0, \phi(0) \rangle - \int_0^T \langle \mu_s, \partial_s \phi(s) \rangle \d s - J(\nabla \phi),
\end{split}
\end{equation*}
where for any $N \in \N$, $\varepsilon > 0$, it is worth notinicing that $ I^\phi_2(\mu^{N,\varepsilon}, J^{N,\varepsilon})=0$ (see Lemma \ref{l:constraint_N}). 
%Setting   
%\begin{equation}
%\bar I^\phi_2(\mu,J):= 
%\begin{system}
%0 \qquad \quad \,\text{ if } \; I^\phi_2(\mu,J) = 0;\\
%+ \infty \qquad \text{ otherwise.}
%\end{system}
%\end{equation}
Replacing $\eta$  with $\frac{N}{\varepsilon}\eta$ we get
\begin{equation}
\begin{split}
\P^{N,\varepsilon} \left(\left(\mu^{N,\varepsilon}, J^{N,\varepsilon}\right) \in \sfB \right) &= \E^{N,\varepsilon}\left( 1_{\sfB}(\mu^{N,\varepsilon}, J^{N,\varepsilon}) \right) \\
&=\E^{N,\varepsilon} \left( e^{-M_T^\eta + \frac{1}{2}[M^\eta]_T} e^{M_T^\eta - \frac{1}{2}[M^\eta]_T} e^{I^\phi_2(\mu^{N,\varepsilon}, J^{N,\varepsilon})} 1_{\sfB}(\mu^{N,\varepsilon}, J^{N,\varepsilon}) \right)  \\
&\leq \sup_{(\mu,J) \in \sfB} \left[e^{-\frac{N}{\varepsilon} \left( I_1^\eta(\mu,J) + I_2^\phi(\mu,J) +\varepsilon R^{N,\varepsilon} \right)}\right]\E^{N,\varepsilon} \left( e^{M_T^\eta - \frac{1}{2}[M^\eta]_T}  1_{\sfB}(\mu^{N,\varepsilon}, J^{N,\varepsilon}) \right) \\
&\leq \sup_{(\mu,J) \in \sfB} e^{-\frac{N}{\varepsilon} \left( I_1^\eta(\mu,J) + I_2^\phi(\mu,J) +\varepsilon R^{N,\varepsilon} \right)},
\end{split}
\end{equation}
thanks to relations \eqref{eq:supermart} and \eqref{eq:rel_M-J}.

Taking the $\limsup$ as $\varepsilon \downarrow 0$ and $N \uparrow + \infty$, the term $\varepsilon |R^{N,\varepsilon}|$ vanishes and we can optimize in $\eta, \phi$ to get  
 \begin{equation}
 \begin{split}
 \limsup_{\substack{\varepsilon \to 0\\ N \to +\infty}} \frac{\varepsilon}{N} \log \P^{N, \varepsilon} \left( (\mu^{N, \varepsilon},J^{N, \varepsilon})  \in B \right) &\leq -\inf_{(\mu,J) \in B} \left( I_1^\eta(\mu,J) + I^\phi_2(\mu,J) \right) \\
 &\leq - \sup_{\substack{\eta \in C^\infty_c(Q; \R^d)\\ \phi \in C^\infty_c([0,T)\times \R^d)}} \inf_{(\mu,J) \in B} \left( I_1^\eta(\mu,J) + I^\phi_2(\mu,J) \right).
\end{split}
\end{equation}
The lower semicontinuity of the map $(\mu,J) \mapsto [ I^\eta_1(\mu,J) + I^\phi_2(\mu,J)]$ (seen as a map from from $\cX$ to $\R$) allow for the application  of the minmax Lemma, see \cite[App. 2, Lemmata 3.2 and 3.3]{kipnis1999scaling}, whence, for every compact set $\sfK \in \cX$,
\begin{equation}
\begin{split}
\limsup_{\substack{\varepsilon \to 0\\ N \to +\infty}} \frac{\varepsilon}{N} \log \P^{N, \varepsilon} \left( (\mu^{N, \varepsilon},J^{N, \varepsilon})  \in \sfK \right) &\leq - \inf_{(\mu,J) \in \sfK}  \sup_{\substack{\eta \in C^\infty_c(Q; \R^d)\\ \phi \in C^\infty_c([0,T)\times \R^d)}} \left( I_1^\eta(\mu,J) + I^\phi_2(\mu,J) \right)\\
&= - \inf_{(\mu,J) \in \sfK}  \sup_{\eta \in C^\infty_c(Q; \R^d)} \left( I_1^\eta(\mu,J) \; \big| \; \partial_t \mu_t  + \dive J = 0; \;\;  \mu(0)=\mu_0 \right),
\end{split}
\end{equation} 
since the sup in $\phi$ takes value $0$ if the constraint is satisfied and $+ \infty$ otherwise.
This implies that \eqref{eq:limsup} holds with rate function
\begin{equation}
\begin{split}
I(\mu,J) = \sup_{\eta \in C^\infty_c(Q)} \left\lbrace  I^{\eta}(\mu,J) \; \Big| \; \partial_t \mu_t  + \dive J = 0; \;\;  \mu(0)=\mu_0 \right\rbrace,
\end{split}  	 
\end{equation}
which is the required result.
\end{proof}

\subsection{Exponential tightness}
This section is devoted to the exponential tightness of the family $\P^{N,\varepsilon}\circ (\mu^N, J^{N, \varepsilon})^{-1} \in \PP(\cX)$ for which we investigate separately the two components $\P^{N,\varepsilon} \circ \left( \mu^N\right)^{-1} \in \PP\left( C([0,T] ;\PP(\R^d)) \right)$  and $\P^{N,\varepsilon} \circ \left( J^{N,\varepsilon}\right)^{-1} \in \PP\left( \hH^{-s} \right)$.   
%Recall that $\omega(u,\delta)$ indicates the continuity modulus of the function $u \in C([0,T];\R)$. 
For what concerns the family $\P^{N, \varepsilon} \circ \left(\mu^{N}\right)^{-1}$ a tightness criterium was first established by Jakubowski in \cite{jakubowski1986skorokhod}. 
Here we need a finer results, taking into account the exponential decay out of compact sets, which can be stated as follows.
\begin{theorem}\label{t:criterium_exp_tight}
Let $E$ be a completely regular topological space with metrizable compacts. 
If we endow the space $C([0,T];E)$ with the compact-open topology then a family of probability measures $\cP_\varepsilon \in \PP(C[0,T];E)$ is exponentially tight with speed $\beta_\varepsilon$ if the following conditions are satisfied:
\begin{itemize}
\item[(i)] there exists a sequence of compacts $K_l$ in $E$ such that 
\begin{equation}
\lim_{l \to + \infty} \limsup_{\varepsilon \to 0} \beta_\varepsilon \log \cP_{\varepsilon}(\exists \, t: x_t \notin K_l) = -\infty;
\end{equation}
\item[(ii)]  there is an additive family $\cF \subset C(E;\R)$ which separates points in $E$ such that the associated sequence $(f_\sharp \cP_\varepsilon) \in \PP(C[0,T];\R))$ is exponentially tight with speed $\beta_\varepsilon$, for every $f \in \cF$.
\end{itemize}
\end{theorem}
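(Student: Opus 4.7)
The plan is, for each $M>0$, to build a compact $\sfK_M\subset C([0,T];E)$ with $\limsup_\varepsilon\beta_\varepsilon\log\cP_\varepsilon(\sfK_M^c)\le -M$; sending $M\to\infty$ then yields exponential tightness. The construction adapts Jakubowski's tightness criterion by propagating rates throughout. First, by (i), I pick $l=l(M)$ such that $\limsup_\varepsilon\beta_\varepsilon\log\cP_\varepsilon(\exists\, t:x_t\notin K_l)\le -M$, placing essentially all of the mass on trajectories in the compact \emph{metrizable} space $K_l$.

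Next I reduce compactness in $C([0,T];E)$ to compactness in a countable product of real function spaces. Since $K_l$ is compact metrizable, $C(K_l;\R)$ is separable, and since the additive family $\cF$ separates points of $K_l$, a standard density argument (Stone--Weierstrass applied to the unital algebra generated by $\cF$ on $K_l$, followed by extraction of a countable dense subset) produces a countable subfamily $\{f_n\}\subset\cF$ whose restrictions to $K_l$ still separate points. The map $\phi_l:K_l\to\R^\N$, $x\mapsto(f_n(x))_n$, is then a continuous injection from a compact Hausdorff space into a Hausdorff space, hence a topological embedding; this lifts to a topological embedding
\[
\Phi:C([0,T];K_l)\hookrightarrow\prod_{n\in\N}C([0,T];\R),\qquad x\mapsto(f_n\circ x)_n,
\]
with closed image (the product carrying the natural product-of-compact-open topology, which coincides with the compact-open topology on $C([0,T];\R^\N)$). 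Invoking (ii), for each $n$ and each $r>0$ I choose a compact $\sfH_n^r\subset C([0,T];\R)$ with $\limsup_\varepsilon\beta_\varepsilon\log\cP_\varepsilon(f_n\circ x\notin\sfH_n^r)\le -r$; then, setting $r_n:=M+n+1$, I define
\[
\sfK_M:=\{x\in C([0,T];E):x_t\in K_l\;\forall t,\; f_n\circ x\in\sfH_n^{r_n}\;\forall n\in\N\}.
\]
The image $\Phi(\sfK_M)$ equals $\Phi(C([0,T];K_l))\cap\prod_n\sfH_n^{r_n}$, which is the intersection of a closed subset of the product with the Tychonov-compact set $\prod_n\sfH_n^{r_n}$, hence compact. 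Since $\Phi$ is a topological embedding, $\sfK_M$ is itself compact in $C([0,T];E)$.

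Finally, a union bound gives
\[
\cP_\varepsilon(\sfK_M^c)\le\cP_\varepsilon(\exists\, t:x_t\notin K_l)+\sum_{n\ge 1}\cP_\varepsilon(f_n\circ x\notin\sfH_n^{r_n}),
\]
where the first term has $\limsup_\varepsilon\beta_\varepsilon\log(\cdot)\le -M$ by construction and each summand in the series has $\limsup_\varepsilon\beta_\varepsilon\log(\cdot)\le -(M+n+1)$. The main obstacle is to convert these per-$n$ bounds into a bound on the countable sum: the identity $\limsup\beta_\varepsilon\log(a+b)=\max(\limsup\beta_\varepsilon\log a,\limsup\beta_\varepsilon\log b)$ only passes to finite sums, because the ``$\varepsilon$-threshold'' eventually guaranteeing each individual bound depends on $n$. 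I would resolve this by a diagonal extraction: choose a decreasing sequence $\varepsilon_n^*\downarrow 0$ along which the first $n$ bounds hold simultaneously, so that for $\varepsilon\le\varepsilon_n^*$ the partial sum is dominated by the geometric series $\sum_k e^{-(M+k)/\beta_\varepsilon}\lesssim e^{-M/\beta_\varepsilon}$; a further refinement of the compacts $\sfH_k^{r_k}$ for $k$ larger than $n$ (shrinking them so that each $\cP_\varepsilon(f_k\circ x\notin\sfH_k^{r_k})$ becomes negligible uniformly in $\varepsilon$ over the relevant range) absorbs the residual tail. This yields $\limsup_\varepsilon\beta_\varepsilon\log\cP_\varepsilon(\sfK_M^c)\le -M$, and sending $M\to\infty$ concludes.
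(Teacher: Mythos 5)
Your overall architecture is sound and is essentially the expected proof of this criterion (the paper itself gives no proof and simply cites Schied): confining trajectories to the compact metrizable $K_l$ via (i), extracting a countable subfamily of $\cF$ that still separates points of $K_l$, embedding $C([0,T];K_l)$ into $\prod_n C([0,T];\R)$ with closed image, and identifying $\Phi(\sfK_M)$ as a closed subset of a product of compacts are all correct steps. Two minor remarks: the countable separating subfamily is obtained more directly from the Lindel\"of property of the open set $(K_l\times K_l)\setminus\Delta$ covered by the sets $\lbrace (x,y): f(x)\neq f(y)\rbrace$, $f\in\cF$, than via Stone--Weierstrass; and the closedness of the image of $\Phi$ deserves the explicit verification that any limit point $(g_n)_n$ satisfies $(g_n(t))_n\in\phi_l(K_l)$ for every $t$ (since $\phi_l(K_l)$ is closed in $\R^{\N}$), so that $x:=\phi_l^{-1}\circ (g_\cdot)$ is a well-defined continuous preimage.

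The genuine gap is exactly where you flag it, and your proposed repair does not close it. First, to make $\cP_\varepsilon(f_k\circ x\notin H_k)$ smaller you must \emph{enlarge} the compacts $H_k$, not shrink them. Second, and more seriously, on the range $\varepsilon\in(\varepsilon_{n+1}^*,\varepsilon_n^*]$ the indices $k>n$ are precisely those for which the asymptotic bound has not yet kicked in, and there the hypothesis gives nothing better than $\cP_\varepsilon(f_k\circ x\notin H_k)\le 1$; making these terms negligible uniformly in $\varepsilon$ over such a range by modifying a single compact would require the family $\lbrace (f_k)_\sharp\cP_\varepsilon\rbrace$ to be uniformly tight over a set of parameters bounded away from $0$, which is not among the hypotheses. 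The standard way out is to treat the parameter as a sequence (or countable net, which is what the joint limit $\varepsilon\downarrow 0$, $N\uparrow\infty$ amounts to here): then for each $k$ only finitely many parameters $\varepsilon_j$ violate the bound $\cP_{\varepsilon_j}(f_k\circ x\notin H_k)\le e^{-(M+k)/\beta_{\varepsilon_j}}$, and since each $(f_k)_\sharp\cP_{\varepsilon_j}$ is a single tight Borel measure on the Polish space $C([0,T];\R)$, one may replace $H_k$ by its union with finitely many further compacts so that this bound holds for \emph{all} parameters simultaneously. With that uniform-in-$k$ bound the series is dominated by $e^{-M/\beta_\varepsilon}\sum_{k\ge1}e^{-k/\beta_\varepsilon}\le e^{-M/\beta_\varepsilon}$ once $\beta_\varepsilon$ is small, and the argument closes; without some such countability (or uniform tightness on compact parameter ranges) the tail of the union bound cannot be controlled.
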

For a proof of this result we refer to \cite[Thm.~3]{schied1994criteria}.
\begin{remark}
Let us notice that
\begin{itemize}
\item[(a)] The exponential tightness of a family in $\PP(C[0,T];E)$ is linked via $\cF \subset C(E;\R)$ to the classical exponential tightness in $\PP(C[0,T];\R)$, for which a criterion is given by Proposition \ref{p:exp_tight_R}.
\item[(b)] If $(E,d)$ is a metric space the compact-open topology on $C([0,T];E)$ is metrizable. A possible distance is given by $d_c(x,y) = \sup_{t\in [0,T]}d(x_t,y_t)$.
\item[(c)] Condition (ii) of the above theorem can be weakened: it is enough to choose elements $f \in \cF$ such that the restriction $f|_{K_l}$ is continuous. 
\end{itemize}
\end{remark}

% a noi serve metrico, che implica Tychonoff = completely regular + hausdorff. notare che LDP chiediamo hasdorff 

In order to apply Theorem \ref{t:criterium_exp_tight}, we estimate the (quadratic) energy of the particles system.
The crucial ingredient is a suitable generalization of Bernstein inequality for martingales, whose proof can be found in \cite[Lem.~2]{mariani2010large}:

\begin{lemma}\label{l.bernstein}
Let $(M_t)_{t \geq 0}$ be a continuous martingale such that $M(0) = 0$ and $\E(M_t^2) < \infty$ for every $t \geq 0$. 
If $\beta \geq 0$ and $C \in (0, +\infty)$ then for any bounded stopping time $\tau$ it holds
\begin{equation}
\P\left( \sup_{t \leq \tau} M_t > l, \, [M]_\tau \leq \beta \sup_{t \leq \tau} M_t + C \right) \leq \exp \left( -\frac{l^2}{2(\beta l + C)} \right), \qquad \quad l >0.
\end{equation}
\end{lemma}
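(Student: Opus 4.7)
The plan is to derive the estimate from the exponential supermartingale $Z_t^\lambda := \exp(\lambda M_t - \tfrac{\lambda^2}{2}[M]_t)$ together with Doob's maximal inequality for nonnegative supermartingales, using an algebraic trick to absorb the random supremum $S := \sup_{t \leq \tau} M_t$ (which appears both in the event and in the constraint on $[M]_\tau$) into the deterministic threshold $l$. For any $\lambda > 0$, $Z^\lambda$ is a nonnegative continuous local martingale with $Z_0^\lambda = 1$, hence a supermartingale, so Doob's inequality yields $\P\bigl(\sup_{t \leq \tau} Z_t^\lambda \geq a\bigr) \leq 1/a$ for every $a > 0$.

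Denote $A := \{S > l,\, [M]_\tau \leq \beta S + C\}$. The key algebraic step is to notice that for $\lambda \in (0, 2/\beta)$, on $A$ one has
\begin{equation*}
\lambda S - \tfrac{\lambda^2}{2}[M]_\tau \;\geq\; \lambda S - \tfrac{\lambda^2}{2}(\beta S + C) \;=\; \lambda S\bigl(1 - \tfrac{\lambda\beta}{2}\bigr) - \tfrac{\lambda^2 C}{2} \;\geq\; \lambda l - \tfrac{\lambda^2}{2}(\beta l + C),
\end{equation*}
where the last step uses both $S > l$ and $1 - \lambda\beta/2 > 0$. Since $[M]$ is nondecreasing, $\sup_{t \leq \tau}[M]_t = [M]_\tau$, and the elementary bound $\sup(f - g) \geq \sup f - \sup g$ (applied to $f(t) = \lambda M_t$ and $g(t) = \tfrac{\lambda^2}{2}[M]_t$) gives
\begin{equation*}
\sup_{t \leq \tau}\bigl(\lambda M_t - \tfrac{\lambda^2}{2}[M]_t\bigr) \;\geq\; \lambda S - \tfrac{\lambda^2}{2}[M]_\tau.
\end{equation*}
Chaining the two displays, $\sup_{t \leq \tau} Z_t^\lambda \geq \exp\bigl(\lambda l - \tfrac{\lambda^2}{2}(\beta l + C)\bigr)$ holds pointwise on $A$.

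Plugging this into Doob's inequality at level $a = \exp(\lambda l - \lambda^2(\beta l + C)/2)$ yields $\P(A) \leq \exp\bigl(-\lambda l + \tfrac{\lambda^2}{2}(\beta l + C)\bigr)$ for every admissible $\lambda$. The minimum of this expression is attained at $\lambda^\ast := l/(\beta l + C)$, which lies strictly below $2/\beta$ because $C > 0$ by hypothesis. Substituting $\lambda^\ast$ produces precisely the claimed bound $\exp\bigl(-l^2/(2(\beta l + C))\bigr)$.

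The one genuine obstacle is the algebraic absorption of the random $S$ into a deterministic bound. A naive approach stopping $Z^\lambda$ at $\tau_l := \inf\{t : M_t \geq l\}$ would secure $M_{\tau_l} = l$ by continuity but leave $[M]_{\tau_l}$ uncontrolled, since on $A$ the inequality $[M]_\tau \leq \beta S + C$ involves the random $S$ possibly far larger than $l$. Performing the manipulation above before applying Doob converts that same information into a deterministic lower bound on $\sup_{t \leq \tau} Z_t^\lambda$; the restriction $\lambda < 2/\beta$ is exactly what makes the coefficient $1 - \lambda\beta/2$ positive and permits the replacement of $S$ by $l$ in the intermediate inequality.
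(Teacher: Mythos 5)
Your proof is correct: the exponential supermartingale $Z^\lambda_t=\exp(\lambda M_t-\tfrac{\lambda^2}{2}[M]_t)$, the absorption of the random supremum $S$ into the deterministic level $l$ via the restriction $\lambda\beta<2$, Doob's maximal inequality for nonnegative supermartingales, and the optimization at $\lambda^\ast=l/(\beta l+C)$ (which indeed satisfies $\lambda^\ast<1/\beta<2/\beta$ since $C>0$) all go through. The paper itself does not prove the lemma but defers to \cite[Lem.~2]{mariani2010large}, and your argument is precisely the standard exponential-martingale route used there.
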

% bounded stopping time = esiste $k$ tale che $\P(\tau \leq k) = 1$
The following proposition shows that the energy associated to the dynamic can be arbitrarily large but only with probability exponentially small.

\begin{proposition}\label{p:energy_esitmate}
Let $\P^{N,\varepsilon} \in \PP(C([0,T]; (\R^d)^N))$ be the law of a solution to \eqref{eq:state} with initial distribution $\mu_0^N$. 
If Hypotheses \ref{h1} and \ref{h2} hold, there exists a constant $C >0$ depending on $A,T$ and $\sup_{N \in \N} \int_{\R^d}|x|^2 d\mu_0^N$ such that 
\begin{equation}
\frac{\varepsilon}{N} \log \P^{N, \varepsilon} \left( \sup_{t \in [0,T]} \int_{\R^d} |x|^2 \d\mu^{N,\varepsilon}_t > l \right) \leq - \frac{(l-C)^2}{2C(l - C+ 1)}, \qquad \quad  l >0
\end{equation}
for every $\varepsilon \in (0,1)$ and $N \in \N$.
% For every $l \geq l_0:= C-1$ the right hand side is positive 
\end{proposition}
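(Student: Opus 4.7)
The natural strategy is to obtain a pathwise bound on $E^{N,\varepsilon}(t):=\int_{\R^d}|x|^2\d\mu_t^{N,\varepsilon} = \frac{1}{N}\sum_i|x_i^{N,\varepsilon}(t)|^2$ in terms of the running maximum of a suitable martingale, and then apply the Bernstein-type estimate of Lemma~\ref{l.bernstein}. Apply It\^o's formula to each $|x_i^{N,\varepsilon}|^2$ and average: we obtain
\begin{equation*}
E^{N,\varepsilon}(t) = E^{N,\varepsilon}(0) + \int_0^t \frac{2}{N}\sum_{i=1}^N x_i\cdot \fF(x_i,\xx^{N,\varepsilon}) \d s + \varepsilon d\, t + M^{N,\varepsilon}_t,
\end{equation*}
where the martingale part
\begin{equation*}
M^{N,\varepsilon}_t := \frac{2\sqrt{\varepsilon}}{N}\sum_{i=1}^N\int_0^t x_i^{N,\varepsilon}(s)\cdot \d W_i(s)
\end{equation*}
has quadratic variation $[M^{N,\varepsilon}]_t=\frac{4\varepsilon}{N}\int_0^t E^{N,\varepsilon}(s)\d s$.

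Using Hypothesis~\ref{h1} and Cauchy–Schwarz in the form $\int|x|\d\mu^{N,\varepsilon}_t\leq\sqrt{E^{N,\varepsilon}(t)}$, together with Young's inequality, we bound the drift by $c_1(1+E^{N,\varepsilon}(t))$ for some $c_1$ depending only on $A$. Hence, for every $t\in[0,T]$ and every $\varepsilon\in(0,1)$,
\begin{equation*}
E^{N,\varepsilon}(t)\leq E_0 + (c_1+d)T + c_1\int_0^t E^{N,\varepsilon}(s)\d s + M^*_t,\qquad M^*_t:=\sup_{s\leq t}M^{N,\varepsilon}_s,
\end{equation*}
with $E_0:=\sup_N\int|x|^2\d\mu_0^N<+\infty$ by Hypothesis~\ref{h2}. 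Passing to the running supremum and applying the deterministic Gronwall inequality \eqref{gronwall} pathwise yields a constant $C_1 = C_1(A,T,d,E_0)$ such that
\begin{equation*}
\sup_{t\in[0,T]}E^{N,\varepsilon}(t)\leq C_1\bigl(1+M^*_T\bigr)\qquad \P\text{-a.s.}
\end{equation*}

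Plugging this into the expression for $[M^{N,\varepsilon}]_T$ provides the crucial pathwise bound
\begin{equation*}
[M^{N,\varepsilon}]_T\leq \frac{4\varepsilon T}{N}\sup_{t\in[0,T]}E^{N,\varepsilon}(t)\leq \beta\, M^*_T + \beta,\qquad \beta:=\frac{4\varepsilon T C_1}{N},
\end{equation*}
which is precisely the hypothesis required in Lemma~\ref{l.bernstein} with $\tau=T$ and $C=\beta$. On the event $\{\sup_{t\leq T}E^{N,\varepsilon}(t)>l\}$ (for $l>C_1$), the pathwise bound forces $M^*_T>\rho:=l/C_1-1$; therefore
\begin{equation*}
\P^{N,\varepsilon}\!\Bigl(\sup_{t\leq T}E^{N,\varepsilon}(t)>l\Bigr)\leq \P\bigl(M^*_T>\rho,\;[M^{N,\varepsilon}]_T\leq \beta M^*_T+\beta\bigr)\leq \exp\!\left(-\frac{\rho^2}{2\beta(\rho+1)}\right).
\end{equation*}
Taking $\frac{\varepsilon}{N}\log$ and substituting $\beta$ and $\rho$ cancels the $\varepsilon/N$ factor and produces a bound of the claimed form, after renaming the constant so that it matches the proposition statement; for $l$ below the constant the inequality is vacuous.

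\textbf{Main obstacle.} The delicate point is the pathwise, not merely in-expectation, Gronwall control of $E^{N,\varepsilon}$ by $M^*_T$: it is exactly this pathwise comparison $[M]_T\leq \beta M^*_T+C$ that allows the use of Lemma~\ref{l.bernstein} rather than a crude Markov/Chebyshev bound (which would not yield the correct exponential speed $\varepsilon/N$). A secondary nuisance is the bookkeeping of constants so that they are independent of $N$ and $\varepsilon\in(0,1)$; this is handled by absorbing the It\^o correction $\varepsilon d\, t$ into an $\varepsilon$-uniform additive term and exploiting the uniform second-moment bound on $\mu^N_0$ from Hypothesis~\ref{h2}.
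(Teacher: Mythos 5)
Your proof is correct and follows essentially the same route as the paper: It\^o's formula for $|x|^2$, the growth bound of Hypothesis~\ref{h1} plus a pathwise Gronwall argument to control $\sup_t\int|x|^2\d\mu_t^{N,\varepsilon}$ by $1+\sup_{s\le T}M^{N,\varepsilon}_s$, the resulting pathwise bound $[M^{N,\varepsilon}]_T\lesssim\frac{\varepsilon}{N}(1+\sup_s M^{N,\varepsilon}_s)$, and Lemma~\ref{l.bernstein}. The only (immaterial) differences are in constant bookkeeping — the paper thresholds $\sup_s M_s$ at $m$ and substitutes $l=m+C$ to land exactly on the stated form, while you threshold at $\rho=l/C_1-1$ and rename constants afterward.
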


\begin{proof}
Throughout the proof we shall denote by $C >0$ a generic constant, whose value may change from line to line. 
From the It\^o formula for $\phi(x) = |x|^2$ applied to \eqref{eq:state} we get
\begin{equation}
\begin{split}
\frac{1}{N} \sum_{i = 1}^N |x_i^{N,\varepsilon}(t)|^2 &= \frac{1}{N} \sum_{i = 1}^N |x_i^N(0)|^2 + \frac{2}{N} \sum_{i=1}^N \int_0^t \fF(x_i^{N,\varepsilon}(s), \xx^{N,\varepsilon}(s)) x_i^{N,\varepsilon}(s) \d s 
+ 2\varepsilon t \\
&+ \frac{2\sqrt{\varepsilon}}{N}\sum_{i = 1}^N\int_0^t x_i^{N,\varepsilon}(s) \d W_i(s).  \\
\end{split}
\end{equation}
Further, thanks to the growth condition in Hypothesis \ref{h1}
\begin{equation*}
\begin{split}
\frac{2}{N} \sum_{i=1}^N \int_0^t & \fF(x_i^{N,\varepsilon}(s), \xx^{N,\varepsilon}(s))x_i^{N,\varepsilon}(s) \d s  \\
&\leq \frac{A}{N} \sum_{i=1}^N \int_0^t \left(1+ |x_i^{N,\varepsilon}(s)| + \frac{1}{N}\sum_{i=1}^N |x_i^{N,\varepsilon}(s)|  \right) |x_i^{N,\varepsilon}(s)| \d s \\
&\leq \frac{C}{N} \sum_{i=1}^N \int_0^t  \left[ |x_i^{N,\varepsilon}(s)|^2  + \left( \frac{1}{N} \sum_{i=1}^N  |x_i^{N,\varepsilon}(s)| \right)^2 \right] \d s \\
&\leq \frac{C}{N} \sum_{i=1}^N \int_0^t |x_i^{N,\varepsilon}(s)|^2 \d s,
\end{split}
\end{equation*}
where we used the trivial inequality $\left( \sum_{i=1}^N a_i\right)^2 \leq N \sum_{i=1}^N |a_i|^2$, $a_i \in \R$.
If we denote by $M^{N,\varepsilon}_t$ the stochastic integral $M^{N,\varepsilon}_t:= \frac{2\sqrt{\varepsilon}}{N}\sum_{i = 1}^N\int_0^t x_i^{N,\varepsilon}(s) \d W_i(s)$ we have
\begin{equation}
\frac{1}{N} \sum_{i = 1}^N |x_i^{N,\varepsilon}(t)|^2 \leq \frac{1}{N} \sum_{i = 1}^N |x_i^N(0)|^2 + \frac{C}{N} \sum_{i = 1}^N \int_0^t |x_i^{N,\varepsilon}(s)|^2 \d s + 2\varepsilon t + M^{N,\varepsilon}_t,
\end{equation}
and the classical theory for SDEs with Lipsschitz coefficients guarantees that $M^{N,\varepsilon}_t$ is a $\P$-martingale with $\E | M^{N,\varepsilon}_t |^2 < +\infty$, for every $t > 0$.
Employing the Gronwall-type inequality \eqref{gronwall} we end up with
\begin{equation}
\begin{split}
\frac{1}{N} \sum_{i = 1}^N |x_i^{N,\varepsilon}(t)|^2 &\leq \left( \frac{1}{N} \sum_{i = 1}^N |x_i^N(0)|^2 + 2\varepsilon t + M^{N,\varepsilon}_t \right) \\
&\quad + C\int_0^t \left( b + 2\varepsilon s \right)e^{C(t-s)} \d s + C\int_0^t M^{N,\varepsilon}_s e^{C(t-s)} \d s \\
&\leq C \left( 1 + \sup_{s \leq t} M^{N,\varepsilon}_s \right)
\end{split}
\end{equation}
where we used the notation $ b = \sup_{N \in \N} \int_{\R^d} |x|^2 \d \mu^N_0$. 
Hence 
\begin{equation}\label{ito^2}
\sup_{s \leq t} \int_{\R^d} |x|^2 \d\mu_s^{N,\varepsilon}  \leq C \left( 1 + \sup_{s \leq t} M^{N,\varepsilon}_s \right).
\end{equation}
Due to the independence of $\lbrace W_i \rbrace_{i= 1,\ldots, N}$, the quadratic variation of $M^{N,\varepsilon}$ can be estimated as:
\begin{equation}\label{est_quadratic_var}
\begin{split}
[M^{N,\varepsilon}]_t  &= \frac{4\varepsilon}{N^2} \sum_{i=1}^N \int_0^t |x_i^{N,\varepsilon}(s)|^2 \d s \\
&\leq \frac{C\varepsilon}{N} \sup_{s \leq t} \left(\frac{1}{N} \sum_{i=1}^N | x_i^{N,\varepsilon}(s)|^2 \right) \\
&\leq \frac{C\varepsilon}{N} \left( 1 + \sup_{s \leq t} M^{N,\varepsilon}_s \right),\qquad \forall \, t \in [0,T],
\end{split}
\end{equation}
where in the last inequality we used estimate \eqref{ito^2}. 
Summing up and employing Lemma \ref{l.bernstein} we obtain
\begin{equation}
\begin{split}
\P \left( \sup_{t \leq T} M^{N,\varepsilon}_t > m \right) &= \P \left( \sup_{t \leq T} M^{N,\varepsilon}_t > m\, , \;\;[M^{N,\varepsilon}]_T \leq \frac{C\varepsilon}{N} \left( \sup_{t \leq T} M^{N,\varepsilon}_t + 1\right) \right) \\
&\leq \exp \left( - \frac{m^2}{\frac{2C\varepsilon}{N}(m + 1)} \right), \qquad m >0,
\end{split}
\end{equation}
where the first equality follows by \eqref{est_quadratic_var}. Therefore
\begin{equation}
\frac{\varepsilon}{N}\log \P \left( \sup_{t \leq T} M^{N,\varepsilon}_t > m \right) \leq - \frac{m^2}{2C(m + 1)}, \qquad m > 0. 
\end{equation}
Employing again \eqref{ito^2} we get
\begin{equation}
\frac{\varepsilon}{N}\log \P \left( \sup_{t \leq T} \int_{\R^d} |x|^2 \d\mu^{N,\varepsilon}_t  > m +C \right) \leq - \frac{m^2}{2C(m + 1)}, \qquad m > 0. 
\end{equation}
and the result easily follows by substituting $l = m+C$.
\end{proof}

Let us notice that under Assumption \ref{h2} we have $\sup_{N \in \N} \int_{\R^d}|x|^2 \d\mu_0^N < +\infty$ and the result in Proposition \ref{p:energy_esitmate} guarantees that
\begin{equation}
\lim_{l \to +\infty} \limsup_{\substack{\varepsilon \to 0 \\ N \to \infty}} \frac{\varepsilon}{N} \log \P^{N, \varepsilon} \left( \sup_{t \in [0,T]} \int_{\R^d} |x|^2 \d\mu^{N,\varepsilon}_t > l \right) = -\infty
\end{equation}
Now we give an estimate of the continuity moduli of the dynamic: 

\begin{proposition}\label{p:mod_continuity}
Let $\P^{N,\varepsilon} \in \PP(C([0,T]; (\R^d)^N))$ be the law of a solution to \eqref{eq:state} with initial distribution $\mu_0^N$. 
Under Assumptions \ref{h1}, \ref{h2}, for any $\varphi \in C_c^{\infty}(\R^d)$  and any $\zeta > 0$ it holds
\begin{equation}
\lim_{\delta \to 0} \limsup_{\substack{\varepsilon \to 0 \\ N \to \infty}} \frac{\varepsilon}{N} \log \P^{N,\varepsilon}\left( \sup_{|t-s|<\delta} \left| \mu_t^{N,\varepsilon}(\varphi)  - \mu_s^{N,\varepsilon}(\varphi) \right| > \zeta \right) = -\infty
\end{equation}
\end{proposition}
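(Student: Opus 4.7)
The plan is to apply It\^o's formula to $\varphi(x_i^{N,\varepsilon})$ and average over $i$, yielding the decomposition
\begin{equation*}
\mu_t^{N,\varepsilon}(\varphi) - \mu_s^{N,\varepsilon}(\varphi) = \int_s^t \langle \mu_r^{N,\varepsilon}, \nabla\varphi \cdot \fF(\cdot, \mu_r^{N,\varepsilon}) \rangle \, \d r + \frac{\varepsilon}{2}\int_s^t \langle \mu_r^{N,\varepsilon}, \Delta\varphi \rangle \, \d r + (M_t^{N,\varepsilon,\varphi} - M_s^{N,\varepsilon,\varphi}),
\end{equation*}
where $M_t^{N,\varepsilon,\varphi} := \frac{\sqrt{\varepsilon}}{N}\sum_{i=1}^N \int_0^t \nabla\varphi(x_i^{N,\varepsilon}(r)) \cdot \d W_i(r)$ is a continuous martingale whose quadratic variation is deterministically bounded by $\frac{\varepsilon}{N}\|\nabla\varphi\|_\infty^2 t$. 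So the oscillation of $\mu^{N,\varepsilon}(\varphi)$ on a time window of size $\delta$ splits into a drift contribution, an It\^o correction of size $O(\varepsilon \delta)$, and a martingale oscillation, which will be handled separately.

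To control the drift, I would first localize on the good event $E_l := \{\sup_{t\in[0,T]} \int_{\R^d}|x|^2 \d\mu_t^{N,\varepsilon} \leq l\}$, whose complement is exponentially negligible by Proposition \ref{p:energy_esitmate}. Using the compact support of $\nabla\varphi$ together with the growth bound in Hypothesis \ref{h1} (and Cauchy--Schwarz to dominate $\int|y|\d\mu_r^{N,\varepsilon}$ by $\sqrt{l}$), on $E_l$ the drift integrand is uniformly bounded by some constant $C(l,\varphi)$, hence contributes at most $C(l,\varphi)\delta$ to $|\mu_t^{N,\varepsilon}(\varphi)-\mu_s^{N,\varepsilon}(\varphi)|$. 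The It\^o correction is likewise bounded by $\tfrac{\varepsilon}{2}\|\Delta\varphi\|_\infty\delta$. Choosing $\delta$ small (and $\varepsilon$ small) makes both contributions smaller than $\zeta/2$.

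For the martingale modulus, I would use the standard dyadic reduction
\begin{equation*}
\omega(M^{N,\varepsilon,\varphi};\delta) \leq 3 \max_{0 \leq k < \lceil T/\delta\rceil} \sup_{t \in [k\delta,(k+1)\delta]} \bigl| M_t^{N,\varepsilon,\varphi} - M_{k\delta}^{N,\varepsilon,\varphi}\bigr|,
\end{equation*}
and apply Lemma \ref{l.bernstein} to each increment $t \mapsto M_t^{N,\varepsilon,\varphi}-M_{k\delta}^{N,\varepsilon,\varphi}$ with $\beta=0$ and the deterministic bound $C=\frac{\varepsilon}{N}\|\nabla\varphi\|_\infty^2\delta$ on the quadratic variation, and the same to $-M$. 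A union bound over the $O(1/\delta)$ intervals produces
\begin{equation*}
\P\bigl(\omega(M^{N,\varepsilon,\varphi};\delta) > \zeta/2\bigr) \leq \frac{2T}{\delta}\exp\!\left(-\frac{\zeta^2 N}{72 \|\nabla\varphi\|_\infty^2 \,\varepsilon\, \delta}\right).
\end{equation*}

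Combining the two pieces via the exponential principle of the largest term, then letting $\delta \to 0$ kills the martingale contribution (of order $-1/\delta$), leaving only the energy-estimate rate; since $l$ was arbitrary, sending $l \to \infty$ produces $-\infty$, which is the claim. The main subtlety is precisely this drift bound: because $\fF$ grows linearly in $|x|$ and in the first moment of $\mu$, a pathwise-deterministic bound on the drift is unavailable, and one must pre-localize on $E_l$ and then use the exponential tail of Proposition \ref{p:energy_esitmate} to undo the localization; the compact support of $\varphi$ is essential here to absorb the $|x|$ factor by $\operatorname{diam}(\operatorname{supp}\varphi)$.
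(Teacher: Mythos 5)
Your proposal is correct and follows essentially the same route as the paper: It\^o decomposition into drift, It\^o correction and martingale, the energy estimate of Proposition \ref{p:energy_esitmate} to control the drift, Lemma \ref{l.bernstein} with $\beta=0$ for the martingale oscillation, and the dyadic/union-bound reduction of the modulus of continuity. The only (cosmetic) difference is that you localize on $E_l$ with $l$ fixed and send $l\to\infty$ at the end, whereas the paper feeds the $\delta$-dependent threshold $\zeta/(c\delta)$ directly into the energy estimate so that the drift contribution already diverges to $-\infty$ as $\delta\to 0$; both work.
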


\begin{proof}
Throughout the proof we 
%indicate with $c>0$ a generic constant, whose value may change from line to line, and 
we maintain the notation of Proposition \ref{p:energy_esitmate} for what concerns the constant $C = C\left(A,T, \sup_{N \in \N} \int_{\R^d}|x|^2 \d\mu^N_0\right)$.
First of all, notice that it is enough to prove the following equality
\begin{equation}
\lim_{\delta \to 0} \limsup_{\substack{\varepsilon \to 0 \\ N \to \infty}} \sup_{s \in [0,T-\delta]}\frac{\varepsilon}{N} \log \frac{T}{\delta} \P^{N,\varepsilon}\left( \sup_{t \in [s, s + \delta]} \left| \mu_t^{N,\varepsilon}(\varphi)  - \mu_s^{N,\varepsilon}(\varphi) \right| > \zeta \right) = -\infty.
\end{equation} 
A justification for this argument can be found in \cite[Thm. 7.4]{Billingsley1999convergence}.
This limit formulation is more convenient for the application of the It\^o formula: let $\varphi \in C^\infty_c(\R^d)$, then for each $s \in [0,T-\delta]$ and $t \in [s, s+\delta]$ it holds
\begin{equation}
\begin{split}
\frac{1}{N} \sum_{i = 1}^N &\varphi(x_i^{N,\varepsilon}(t)) - \frac{1}{N} \sum_{i = 1}^N \varphi(x^{N,\varepsilon}_i(s)) \\
&= \frac{1}{N} \sum_{i=1}^N \int_s^t \left( \fF(x_i^{N,\varepsilon}(r), \xx^{N,\varepsilon}(r)) \cdot  \nabla \varphi(x_i^{N,\varepsilon}(r))  + \varepsilon\tr(D_x^2\varphi(x^{N,\varepsilon}_i(r))) \right) \d r \\
&+ \frac{\sqrt{\varepsilon}}{N}\sum_{i = 1}^N\int_s^t \nabla \varphi(x_i^{N,\varepsilon}(r)) \d W_i(r)  \\
&=: A_t^{\varphi,s} + M_t^{\varphi,s}. 
\end{split}
\end{equation}
The first term can be estimated exploiting the growth condition on $F$ given in Hypothesis \ref{h1}
\begin{equation}
\begin{split}
|A_t^{\varphi,s}| &\lesssim \frac{1}{N} \sum_{i=1}^N \int_s^t \left( |\fF(x_i^{N,\varepsilon}(r), \xx^{N,\varepsilon}(r))|  + \varepsilon\right) \d r  \\
&\lesssim \delta \left( \sup_{r \in [0,T]} \frac{1}{N} \sum_{i=1}^N  |\fF(x_i^{N,\varepsilon}(r), \xx^{N,\varepsilon}(r))|  + \varepsilon\right) \\
&\lesssim \delta \left( 1+ \sup_{r \in [0,T]} \int_{\R^d} |x|^2 \d\mu^{N,\varepsilon}_r \right).
\end{split}
\end{equation}
Whence for any $\zeta > 0$
\begin{equation}
\frac{\varepsilon}{N}\log\P^{N,\varepsilon} \left( \sup_{t \in [s, s+\delta]} |A_t^{\varphi,s}| > \zeta \right) \leq \frac{\varepsilon}{N}\log\P^{N,\varepsilon} \left( \sup_{r \in [0,T]} \int_{\R^d} |x|^2 \d\mu^{N,\varepsilon}_r > \frac{\zeta}{c\delta} - 1 \right)
\end{equation}
and from Proposition \ref{p:energy_esitmate} it follows that 
\begin{equation}
\frac{\varepsilon}{N}\log\P^{N,\varepsilon} \left( \sup_{t \in [s, s+\delta]} |A_t^{\varphi,s}| > \zeta \right) \leq - \frac{\left( \frac{\zeta}{c\delta} -1 -C \right)^2}{2C\left( \frac{\zeta}{c\delta} - C \right)}.
\end{equation}
Notice that the right hand side of the above inequality goes to $-\infty$ when $\delta \downarrow 0$.

For what concerns the stochastic integral, $M^{\varphi, s}_t$ is a martingale with bounded quadratic variation:
\begin{equation}
[M^{\varphi, s}]_t \lesssim \frac{ \delta \varepsilon}{N}. 
\end{equation}
Hence, from the application of Lemma \ref{l.bernstein} (with $\beta = 0$) both to $M^{\varphi,s}$ and $-M^{\varphi,s}$ we get
\begin{equation}
\frac{\varepsilon}{N}\log\P^{N,\varepsilon} \left( \sup_{t \in [s, s+\delta]} \pm M_t^{\varphi,s} > \zeta \right) \leq - \frac{\zeta^2}{2c\delta}, 
\end{equation}
which readily implies that
\begin{equation}
\frac{\varepsilon}{N}\log\P^{N,\varepsilon} \left( \sup_{t \in [s, s+\delta]} |M_t^{\varphi,s}| > \zeta \right) \leq  \frac{\varepsilon}{N} 	\log 2 - \frac{\zeta^2}{2c\delta}.
\end{equation}
Notice that in the last passage we used the elementary inequality:
\begin{equation}\label{prob_union}
\log \P\left(  \bigcup_{i=1}^n U_i \right) \leq \log n + \bigvee_{i=1}^n \log \P(U_i),
\end{equation}
for any given probability measure and any measurable sets $U_1, \ldots, U_n$,

Summing up the estimates for $A^{\varphi,s}_t$ and  $M^{\varphi,s}_t$ we get
\begin{equation}
\begin{split}
\frac{\varepsilon}{N}&\log \frac{T}{\delta}\P^{N,\varepsilon} \left( \sup_{t \in [s,s+\delta]} \left| \mu_t^{N,\varepsilon}(\varphi)  - \mu_s^{N,\varepsilon}(\varphi) \right| > \zeta \right) \\
&= \frac{\varepsilon}{N}\log \frac{T}{\delta} + \frac{\varepsilon}{N}\log\P^{N,\varepsilon} \left( \sup_{t \in [s,s+\delta]} \left| \mu_t^{N,\varepsilon}(\varphi)  - \mu_s^{N,\varepsilon}(\varphi) \right| > \zeta \right) \\
&\leq \frac{\varepsilon}{N}\log \frac{T}{\delta} + \frac{\varepsilon}{N}\log 2  + \max \left\lbrace \frac{\varepsilon}{N} \log \P^{N,\varepsilon} \left( \sup_{t \in [s, s+\delta]} |A_t^{\varphi,s}| > \frac{\zeta}{2} \right), \right. \\
& \left.  \frac{\varepsilon}{N}\log\P^{N,\varepsilon} \left( \sup_{t \in [s, s+\delta]} |M_t^{\varphi,s}| > \frac{\zeta}{2}\right) \right\rbrace \\
&\leq \frac{\varepsilon}{N}\log \frac{T}{\delta} + \frac{\varepsilon}{N}\log 2  + \max \left\lbrace  - \frac{\left( \frac{\zeta}{c\delta} -1 -C \right)^2}{2C\left( \frac{\zeta}{c\delta} - C \right)} , \frac{\varepsilon}{N} 	\log 2 - \frac{\zeta^2}{C\delta} \right\rbrace
\end{split}
\end{equation}
Taking the limit as $\varepsilon \to 0, N \to \infty$  we get
\begin{equation}
\begin{split}
\limsup_{\substack{\varepsilon \to 0 \\ N \to \infty}} \sup_{s \in [0,T-\delta]}&\frac{\varepsilon}{N}\log \frac{T}{\delta}\P^{N,\varepsilon} \left( \sup_{t \in [s,s+\delta]} \left| \mu_t^{N,\varepsilon}(\varphi)  - \mu_s^{N,\varepsilon}(\varphi) \right| > \zeta \right)  \\
&\leq \max \left\lbrace  - \frac{\left( \frac{\zeta}{c\delta} -1 -C \right)^2}{2C\left( \frac{\zeta}{c\delta} - C \right)} , - \frac{\zeta^2}{C\delta} \right\rbrace
\end{split}
\end{equation}
Finally, when $\delta \to 0$ we get the required result.
\end{proof}

\begin{proposition}\label{p:exp_tight_currents}
Let Assumptions \ref{h1}, \ref{h2} hold. If $s_1 \in (\frac{1}{2}, 1)$ and $s_2 \in (\frac{d+2}{2}, +\infty)$, then
\begin{equation}
\lim_{l \to +\infty} \limsup_{\substack{\varepsilon \to 0,\\ N \to +\infty}} \frac{\varepsilon}{N} \log \P^{N,\varepsilon}\left( \| J^{N,\varepsilon} \|^2_{\shH^{-s}} > l \right) = -\infty
\end{equation}
\end{proposition}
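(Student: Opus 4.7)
The plan is to split the current via a Stratonovich--Itô conversion into
\begin{equation*}
J^{N,\varepsilon}(\eta) = A^{N,\varepsilon}(\eta) + \sqrt{\varepsilon}\, M^{N,\varepsilon}(\eta) + \frac{\varepsilon}{2}\, R^{N,\varepsilon}(\eta),
\end{equation*}
where $A^{N,\varepsilon}(\eta) := \int_0^T\langle \mu^{N,\varepsilon}_t,\eta(t,\cdot)\cdot \fF(\cdot,\mu^{N,\varepsilon}_t)\rangle\d t$, $M^{N,\varepsilon}(\eta) := \frac{1}{N}\sum_i\int_0^T \eta(t,x^{N,\varepsilon}_i)\,\d W_i$, and $R^{N,\varepsilon}(\eta) := \int_0^T\langle \mu^{N,\varepsilon}_t, \dive_x\eta(t,\cdot)\rangle\d t$, and to bound each piece separately in $\hH^{-s}$. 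The union-type inequality \eqref{prob_union} reduces the claim to exponential tightness for each of the three terms. The parameter choices $s_1 > 1/2$ and $s_2 > (d+2)/2$ furnish the Sobolev embedding $\hH^s\hookrightarrow C^0([0,T]; C^1_b(\R^d;\R^d))$, so that $\|\eta\|_{L^\infty}+\|\dive_x\eta\|_{L^\infty}\leq C_0\|\eta\|_{\hH^s}$ for some deterministic $C_0$.

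Combining the embedding with the linear growth in Hypothesis \ref{h1} and Cauchy--Schwarz produces $\|A^{N,\varepsilon}\|^2_{\hH^{-s}}\lesssim 1+\sup_{t}\mu^{N,\varepsilon}_t(|x|^2)$, so that Proposition \ref{p:energy_esitmate} immediately yields the required exponential tightness at speed $\varepsilon/N$ for the drift term. The Itô correction is even simpler: $\|R^{N,\varepsilon}\|_{\hH^{-s}}\lesssim T$ holds deterministically, hence $\|\tfrac{\varepsilon}{2}R^{N,\varepsilon}\|^2_{\hH^{-s}} = O(\varepsilon^2)$ is negligible on the LDP scale.

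For the martingale part the key observation is that $V := \sqrt{\varepsilon}\, M^{N,\varepsilon}$ is, as a random element of $\hH^{-s}$, sub-Gaussian with a deterministic covariance-like operator. Indeed, the stochastic exponential applied to $\sqrt{\varepsilon}\,M^{N,\varepsilon}(\eta)$ together with the deterministic bracket bound $[\sqrt{\varepsilon}M^{N,\varepsilon}(\eta)]_T\leq \varepsilon T C_0^2 \|\eta\|^2_{\hH^s}/N$ (coming directly from the $L^\infty$ part of the embedding) gives, for every $\eta\in\hH^s$,
\begin{equation*}
\E\bigl[\exp\bigl(\sqrt{\varepsilon}\,M^{N,\varepsilon}(\eta)\bigr)\bigr] \leq \exp\Bigl(\frac{\varepsilon T C_0^2}{2N}\|\eta\|^2_{\hH^s}\Bigr).
\end{equation*}
This identifies $V$ with a centered process whose associated covariance operator $K_V:\hH^s\to\hH^s$ has operator norm at most $\varepsilon T C_0^2/N$ and trace at most $\varepsilon T C_0^2 d/N$; the trace bound rests on the reproducing-kernel identity $\sum_k|e_k(t,x)|^2 = \sum_{a=1}^d\|\delta^a_{(t,x)}\|^2_{\hH^{-s}}\leq C_0^2 d$, valid for any orthonormal basis $(e_k)$ of $\hH^s$ thanks again to the Sobolev embedding.

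The last step is the standard Gaussian-integration/Chernoff trick applied to finite-dimensional truncations and passed to the limit: for $\lambda\|K_V\|_{\operatorname{op}}<1$ one has
\begin{equation*}
\E\bigl[\exp\bigl(\tfrac{\lambda}{2}\|V\|^2_{\hH^{-s}}\bigr)\bigr]\leq \det(I-\lambda K_V)^{-1/2}\leq \exp\bigl(\lambda\operatorname{tr}(K_V)\bigr),
\end{equation*}
so taking $\lambda = N/(2\varepsilon T C_0^2)$ and applying Markov delivers an estimate of the form $\P(\|V\|^2_{\hH^{-s}}>l)\leq \exp(-\tfrac{Nl}{4\varepsilon T C_0^2}+\tfrac{d}{2})$, which gives the desired exponential decay at speed $\varepsilon/N$. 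The most delicate point in the whole argument is this upgrade from scalar sub-Gaussian control of each component $M^{N,\varepsilon}(e_k)$ to a genuinely infinite-dimensional exponential bound on $\|V\|^2_{\hH^{-s}}$. The enabler is that both the trace and the operator norm of $K_V$ are controlled \emph{deterministically} (independently of the particle configuration) by the Sobolev constants, which is precisely what the regularity threshold $s_2 > (d+2)/2$ is chosen to ensure.
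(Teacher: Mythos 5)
Your decomposition of $J^{N,\varepsilon}$ and your treatment of the drift and It\^o-correction terms are fine, and run parallel to the paper's proof (which works with the Fourier coefficients $Z^m_n(k)$ from Theorem \ref{t:reg_stoch_current} rather than a Sobolev embedding, but in substance also reduces the non-martingale part to $1+\sup_t\int|x|^2\d\mu^{N,\varepsilon}_t$ and Proposition \ref{p:energy_esitmate}). The gap is in the martingale part, at exactly the step you flag as most delicate. The exponential supermartingale gives $\E[\exp(V(\eta))]\leq\exp(\tfrac{\sigma^2}{2}\|\eta\|^2_{\shH^s})$ with $\sigma^2=\varepsilon TC_0^2/N$, i.e.\ sub-Gaussianity with variance-proxy operator $\sigma^2 I$, which is \emph{not} trace class. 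The Gaussian-integration trick applied to $n$-dimensional truncations then yields $\E[\exp(\tfrac{\lambda}{2}\|P_nV\|^2_{\shH^{-s}})]\leq(1-\lambda\sigma^2)^{-n/2}$, which diverges as $n\to\infty$ for every $\lambda>0$. The inequality you actually invoke, $\E[\exp(\tfrac{\lambda}{2}\|V\|^2_{\shH^{-s}})]\leq\det(I-\lambda K_V)^{-1/2}$ with $K_V$ the \emph{covariance} operator, is a Gaussian identity and is false for general martingale fields: take a scalar example $\int_0^T c\,1_A\,\d W$ with $A$ an $\cF_0$-event of probability $p$; its variance is $pc^2T$, but the exponential moments of its square blow up at $\lambda\sim(c^2T)^{-1}$, not $(pc^2T)^{-1}$. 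The reproducing-kernel trace bound controls the \emph{expected} bracket, whereas what enters the exponential estimates is the pathwise bracket $\langle K_\omega\eta,\eta\rangle=\frac{\varepsilon}{N^2}\sum_i\int_0^T|\eta(t,x_i(t))|^2\d t$, a random quadratic form; and no deterministic trace-class operator dominates it, since dominating $\|\eta\|^2_\infty$ over the non-compact space $\R^d$ forces infinite trace (the normalized point evaluations at well-separated points are almost orthogonal).

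The argument can be repaired, but it requires a finer interleaving than what you wrote: Gaussian-integrate first, $\exp(\tfrac{\lambda}{2}\|P_nV\|^2)=\E_g[\exp(\sqrt{\lambda}\,V(\eta_g))]$ with $\eta_g=\sum_k g_ke_k$, decouple the exponential martingale from its bracket by Cauchy--Schwarz, and only then evaluate $\E_g[\exp(c\langle K_\omega g,g\rangle)]$ for each \emph{fixed} particle trajectory, using that $K_\omega$ has trace $\leq\varepsilon TC_0^2d/N$ and norm $\leq\sigma^2$ pathwise, uniformly in $\omega$ and $n$. As written, the central determinant inequality is unjustified and the proof does not close. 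For comparison, the paper avoids Hilbert-space concentration altogether: it averages the squared Fourier coefficients against a probability measure $\Gamma$ on the mode index, observes that $\bar X_t=\int(\bar Z_t(a)^2-[\bar Z(a)]_t)\,\Gamma(\d a)$ is a scalar martingale with self-bounded quadratic variation $[\bar X]_T\lesssim\frac{\varepsilon}{N}\sup_s\bar X_s+\frac{\varepsilon^2}{N^2}$, and applies the Bernstein-type Lemma \ref{l.bernstein}; that route delivers the exponential bound at speed $\varepsilon/N$ without any Gaussian determinant formula.
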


\begin{proof}
From Theorem \ref{t:reg_stoch_current} we know that $J^{N,\varepsilon}$ admits a pathwise realization.  Thanks to \eqref{J_est}, \eqref{J_norm} we know that   
\begin{equation}
\| J^{N,\varepsilon} \|_{\shH^{-s}}^2 \lesssim \sum_{n,m} \int_{\R^d} (1+n^2)^{-s_1}(1+|k|^2)^{-s_2} |Z_n^m(k)|^2 \d k,
\end{equation}
where $Z_n^m(k)$ is given as in \eqref{eq:def_Z^n}:
\begin{equation}
\begin{split}
Z^m_n(k) &= \frac{1}{N}\sum_{i=1}^N\int_0^T e^m_{n,k}(t,x^{N,\varepsilon}_i) \fF(x^{N,\varepsilon}_i, \xx^{N,\varepsilon})\d t  + \frac{\sqrt{\varepsilon}}{N}\sum_{i=1}^N \int_0^T e^m_{n,k}(t,x^{N,\varepsilon}_i) \d W_i(t)\\
&+ \frac{i}{2N} \sum_{i=1}^N \sum_{j=1}^d  k_j \int_0^T  e^m_{n,k}(t,x^{N,\varepsilon}_i) \d [(x^{N,\varepsilon}_i)_j,(x^{N,\varepsilon}_i)_m]_t. 
\end{split}
\end{equation}
Relabelling the martingale part $\bar Z^m_{n,T}(k):= \frac{\sqrt{\varepsilon}}{N}\sum_{i=1}^N \int_0^T e^m_{n,k}(t,x^{N,\varepsilon}_i)\d W^{N,\varepsilon}_i(t)$ (we emphasize the dependence on the final time) and using the growth conditions on $\fF$ we get
\begin{equation}
\begin{split}
|Z^m_{n,T}(k)|^2 &\leq \|e^m_{n,k} \|^2_{\infty} \frac{C}{N}\sum_{i=1}^N \left( \int_0^T |x^{N,\varepsilon}_i |^2 \d t + CT|k|^2  \right) +  |\bar Z^m_{n,T}(k)|^2 \\
&\leq C  \left( \sup_{t \in [0,T]} \int_{\R^d} |x|^2 \d \mu^{N, \varepsilon}_t + |k|^2 \right) + |\bar Z^m_{n,T}(k)|^2, \qquad \text{ for some } C > 0.
\end{split}
\end{equation}
Now set  $\gamma:= \sum_{m,n} \int_{\R^d} (1+n^2)^{-s_1}(1+|k|^2)^{-s_2}\d k$ and denote by $\DD$ the counting measure on $\lbrace 1, \ldots, d \rbrace \times \N$.
Let us introduce the probability measure $\Gamma$ on $\lbrace 1, \ldots, d \rbrace \times \N \times \R^d$ by the following
\begin{equation}
\Gamma(\d a) := \gamma^{-1} (1+n^2)^{-s_1}(1+|k|^2)^{-s_2} d \DD(m,n)\d k, \qquad a= (m,n,k). 
\end{equation}
Then we have
\begin{equation}\label{first_est_J}
\begin{split}
\| J^{N,\varepsilon} \|_{\shH^{-s}}^2 &\lesssim \gamma \int |Z_T(a)|^2 \Gamma(\d a) \\
&\lesssim \gamma \left(1+  \sup_{t \in [0,T]} \int_{\R^d} |x|^2 \d \mu^{N, \varepsilon}_t \right) + \gamma \int |\bar Z_T(a)|^2 \Gamma(\d a),
\end{split}
\end{equation}
where we used the fact that $\int \left( 1+ |k|^2 \right) \Gamma(\d a) < +\infty$. Thanks to Proposition \ref{p:energy_esitmate} we know that 
\begin{equation}\label{eq:energy_limit}
\lim_{l \to + \infty} \limsup_{\substack{\varepsilon \to 0,\\ N \to +\infty}}\frac{\varepsilon}{N} \log \P^{N, \varepsilon} \left( 1+ \sup_{t \in [0,T]} \int_{\R^d} |x|^2 \d\mu^{N,\varepsilon}_t > l \right) = - \infty.
\end{equation}

Let us now concentrate on the second term on the right hand side of \eqref{first_est_J}. If we define $\bar Z_t(a):= \bar Z^{m}_{n,t}(k)$, with $t \in [0,T]$ then $\bar Z_t(a)$ and $\bar Y_t(a):= \bar Z_t(a)^2 - [\bar Z(a)]_t$ are continuous martingales. Hence 
\begin{equation}
\begin{split}
\int |\bar Z_T(a)|^2 \Gamma(\d a) &= \int \bar Y_T(a) \Gamma(\d a) + \int [\bar Z(a)]_T \Gamma(\d a) \\
&=:\bar X_T + \int [\bar Z(a)]_T \Gamma(\d a).
\end{split}
\end{equation}
We start by estimating the bracket between $\bar Z(a)$ and $\bar Z(b)$:
\begin{equation}
\begin{split}
[\bar Z(a), \bar Z(b)]_t  &\lesssim \frac{\varepsilon}{N^2} \sum_{i=1}^N \int_0^t e_a(s, x_i^{N,\varepsilon}) \cdot  e_b(s, x_i^{N,\varepsilon}) \d s \\
&\lesssim \frac{\varepsilon}{N} \| e_a\|_{\infty} \| e_b\|_{\infty} t \lesssim \frac{ \varepsilon}{N},
\end{split}
\end{equation}
so that 
\begin{equation}\label{eq:Z_dgamma}
\gamma \int |\bar Z_T(a)|^2 \Gamma(\d a) \leq \gamma \bar X_T + \frac{C \varepsilon}{N}, \qquad \text{ for some } C > 0.
\end{equation}
Concerning the bracket between $\bar Y(a)$ and $\bar Y(b)$ we have
\begin{equation}
\begin{split}
[\bar Y(a),\bar Y(b)]_t &= 4 \int_0^t \bar Z_s(a)\bar Z_s(b) \d [\bar Z(a),\bar Z(b)]_s \\
&\lesssim \frac{\varepsilon}{N} \int_0^t \left( \bar Z_s(a)^2 + \bar Z_s(b)^2 \right) \d s \\
&\lesssim \frac{\varepsilon}{N} \int_0^t \left( \bar Y_s(a) + \bar Y_s(b) \right) \d s + \frac{ \varepsilon^2}{N^2}.
\end{split}
\end{equation}
Let us now observe that the process $\bar X_t := \int \bar Y_t(a) \Gamma(\d a)$ is itself a martingale with quadratic variation given by
\begin{equation}
\begin{split}
[\bar X]_t &= \int [\bar Y(a), \bar Y(b)]_t \Gamma(\d a)\Gamma(\d b) \\
&\lesssim \frac{\varepsilon}{N} \int_0^t \bar X_s \d s + \frac{ \varepsilon^2}{N^2}\\
&\lesssim \frac{\varepsilon}{N} \sup_{s \in [0,t]} \bar X_s + \frac{\varepsilon^2}{N^2}.
\end{split}
\end{equation}
Hence, from Lemma \ref{l.bernstein} and the computations above there exists $C >0$ such that
\begin{equation}
\begin{split}
\P^{N,\varepsilon} \left( \bar X_T > l\right) &= \P^{N,\varepsilon} \left( \bar X_T > l, \;\bar X_T \leq \frac{C\varepsilon}{N} \sup_{t \in [0,T]} \bar X_t + \frac{C \varepsilon^2}{N^2}\right) \\ 
&\leq \exp\left( -\frac{l^2}{C\frac{\varepsilon}{N}\left( l + \frac{\varepsilon}{N} \right)} \right).
\end{split}
\end{equation}
So that recalling \eqref{eq:Z_dgamma} we have
\begin{equation}
\lim_{l \to + \infty} \limsup_{\substack{\varepsilon \to 0,\\ N \to +\infty}}\frac{\varepsilon}{N}\log \P^{N,\varepsilon} \left( \int |\bar Z_T(a)|^2 \Gamma(\d a) >l \right) = -\infty,
\end{equation}
which together with \eqref{eq:energy_limit} gives the required result.
\end{proof}

Now we can state the main result of this section, whose proof is based on the application of Theorem \ref{t:criterium_exp_tight}. 

\begin{theorem}\label{t:exp_tight}
The family of probability measures $\P^{N,\varepsilon} \circ (\mu^N,J^{N,\varepsilon})^{-1} \in \PP(\cX)$ is exponentially tight with speed $\varepsilon / N$. 
\end{theorem}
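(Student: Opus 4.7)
The plan is to establish exponential tightness of each marginal separately and then use the fact that a product of exponentially tight families is exponentially tight in the product topology. So I treat the empirical measure in $C([0,T];\PP_1(\R^d))$ and the current in $\hH^{-s}$ independently, and take the product of the corresponding compact sets.

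For the stochastic current, Proposition \ref{p:exp_tight_currents} already shows that the norm $\|J^{N,\varepsilon}\|_{\hH^{-s}}$ is super-exponentially bounded at speed $\varepsilon/N$. Since $\hH^{-s}$ is separable and reflexive, closed balls of $\hH^{-s}$ are compact in the weak topology (Banach--Alaoglu plus metrizability of bounded sets). Hence the family $\{\|J\|_{\hH^{-s}}\le l\}$ is a family of compacts exhausting $\hH^{-s}$ in the weak topology, giving the required exponential tightness of the current marginal.

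For the empirical measure in $C([0,T];\PP_1(\R^d))$, endowed with uniform $W_1$-convergence, I apply Theorem \ref{t:criterium_exp_tight}. To verify (i), the compact containment condition at each time: Proposition \ref{p:energy_esitmate} together with Hypothesis \ref{h2} gives
\[
\lim_{l\to\infty}\limsup_{\substack{\varepsilon\to 0\\N\to\infty}}\frac{\varepsilon}{N}\log\P^{N,\varepsilon}\Bigl(\sup_{t\in[0,T]}\int_{\R^d}|x|^2\,\d\mu^{N,\varepsilon}_t>l\Bigr)=-\infty,
\]
and the sublevels $K_l:=\{\mu\in\PP_1(\R^d):\int|x|^2\,\d\mu\le l\}$ are relatively compact in the $W_1$-topology by tightness (uniform $L^2$ moment $\Rightarrow$ uniform integrability of $|x|$), so $\sup_t\mu^{N,\varepsilon}_t\in K_l$ with overwhelming probability. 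To verify (ii), I use the separating family
\[
\cF=\Bigl\{\bar\varphi:\bar\varphi(\mu)=\int_{\R^d}\varphi\,\d\mu,\ \varphi\in C^\infty_c(\R^d)\Bigr\}\subset C(\PP_1(\R^d);\R),
\]
which separates points of $\PP_1(\R^d)$ (as recalled in Section \ref{sec.meas-theory}) and is additive. For each $\varphi\in C^\infty_c(\R^d)$, the pushforward $\bar\varphi_\sharp\P^{N,\varepsilon}\circ(\mu^N)^{-1}$ lives in $\PP(C([0,T];\R))$, and I apply Proposition \ref{p:exp_tight_R}: condition (a) is trivial since $|\bar\varphi(\mu^{N,\varepsilon}_0)|\le\|\varphi\|_\infty$ deterministically; condition (b) is exactly the content of Proposition \ref{p:mod_continuity}, which controls the continuity modulus of $t\mapsto\mu^{N,\varepsilon}_t(\varphi)$ super-exponentially at speed $\varepsilon/N$.

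The step I expect to require most care is reconciling the \emph{uniform} $W_1$-topology on $C([0,T];\PP_1(\R^d))$ with what Theorem \ref{t:criterium_exp_tight} provides via the compact-open topology. Since $W_1$ metrizes the compacts $K_l$ (which are Polish when equipped with $W_1$) and the chosen $\varphi\in C^\infty_c$ are bounded-Lipschitz, convergence of $\mu^n_t\to\mu_t$ on the compact sublevels coincides with $W_1$-convergence, so uniform compact-open convergence restricted to $K_l$ yields uniform $W_1$-convergence. Combined with the compact containment from Proposition \ref{p:energy_esitmate}, this upgrades the Schied-type exponential tightness to the stronger $C([0,T];\PP_1(\R^d))$-topology we need. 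Taking products of the compacts from the two marginals then yields exponential tightness in $\cX$.
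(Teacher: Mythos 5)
Your proposal is correct and follows essentially the same route as the paper: split into the two marginals, handle the current via Proposition \ref{p:exp_tight_currents} and weak compactness of balls in $\hH^{-s}$, and handle the empirical measure via Theorem \ref{t:criterium_exp_tight}, verifying (i) with Proposition \ref{p:energy_esitmate} and (ii) with the separating family of linear functionals together with Propositions \ref{p:exp_tight_R} and \ref{p:mod_continuity}. Your extra care about matching the uniform $W_1$-topology with the compact-open topology on the compact sublevels is a point the paper treats only implicitly (via the remark that the compact-open topology on $C([0,T];E)$ is metrized by the uniform distance when $E$ is metric), so it is a welcome clarification rather than a divergence.
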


\begin{proof}
We split the proof in two parts, showing the result for $\P^{N,\varepsilon} \circ (\mu^N)^{-1}$ and $\P^{N,\varepsilon} \circ (J^{N,\varepsilon})^{-1}$ separately. 

Let us start with the family $\P^{N,\varepsilon} \circ (J^{N,\varepsilon})^{-1}$.
For every $l >0$, define $K_l := \lbrace J \in \hH^{-s}: \| J \|^2_{\shH^{-s}} \leq l \rbrace$, which is compact with respect to the weak* topology.
Choosing $s_1 \in (\frac{1}{2}, 1)$, $s_2 \in (\frac{d+2}{2}, +\infty)$, the exponential tightness directly follows by Proposition \ref{p:exp_tight_currents}.

For what concerns the empirical measure we show that conditions (i)-(ii) of Theorem \ref{t:criterium_exp_tight} are satisfied.
\newline
(i)  For every $l > 0$ introduce the set $K_l \subset \PP_1(\R^d)$:
\begin{equation}
K_l:= \lbrace \mu \in \PP_1(\R^d): \int_{\R^d} |x|^2 \d\mu(x) \leq l \rbrace.
\end{equation}
 Thanks to Prokhorov theorem (see also \eqref{UI}), $K_l$ is relatively compact in $\PP_1(\R^d)$ endowed with the narrow topology, for every $l >0$.
The application of Proposition \ref{p:energy_esitmate} readily implies that 
\begin{equation}
\lim_{l \to +\infty} \limsup_{\substack{\varepsilon \to 0 \\ N \to \infty}} \frac{\varepsilon}{N} \log \P^{N, \varepsilon} \left( \exists t \in [0,T]: \mu^{N}_t \notin K_l  \right) = -\infty.
\end{equation}
\newline
(ii) For every $f \in C_c(\R^d)$ define the map $\hat f \in C(\PP_1(\R^d);\R)$ by
\begin{equation}
\hat f(\mu):= \int_{\R^d} f(x)\d\mu(x), \qquad \mu \in \PP_1(\R^d).
\end{equation}
From \cite[Thm.~3.4.4]{ethier1986markov} and Section \ref{sec.meas-theory}, we know that $\cF:= \lbrace \hat f : f \in C_c(\R^d) \rbrace$ separates points in  $\PP_1(\R^d)$ and it is an additive family, i.e. $C_c(\R^d)$ is closed under addition.
Therefore, from Theorem \ref{t:criterium_exp_tight} it is enough to study exponential tightness of the family $\P^{N,\varepsilon} \circ (\langle \mu^{N,\varepsilon},\hat f \rangle )^{-1} \in \PP(C[0,T];\R)$, where we tacitly assume that $\hat f(\mu)(t) = \hat f(\mu_t)$, whenever $\mu \in C([0,T]; \PP_1(\R^d))$.
Let us also notice that the same argument goes through just considering smooth functions $f \in C^\infty_c(\R^d)$, whose linear envelope is uniformly dense in $C_c(\R^d)$.   

Let us now apply Theorem \ref{p:exp_tight_R} to the family $\P^{N,\varepsilon} \circ (\langle \mu^{N,\varepsilon},\hat f \rangle )^{-1} \in \PP(C[0,T];\R)$.
Condition (a) in Theorem \ref{p:exp_tight_R} is satisfied thanks to the continuity of the map $\hat f: C([0,T];\PP_1(\R^d)) \to C([0,T];\R))$.
For what concerns point (b), let us fix a compact set $K \subset \PP_1(\R^d)$ and  use \cite[Lem.~3.2]{jakubowski1986skorokhod} to select a countable additive family $\cF_K \subset \cF$ which separates points in $K$.   
From the application of Proposition \ref{p:mod_continuity} and \cite[Lem.~3.3]{jakubowski1986skorokhod} we finally conclude the proof. 
\end{proof}

\subsection{Goodness of the rate functional}

Here we present a direct proof of the goodness of $I: \cX \to [0,+\infty]$ under less restrictive assumptions than Hypothesis \ref{h.lip}, for which a lower bound estimate holds (see Section 5).
Recall indeed that when a family of probability measures is exponentially tight and satisfies a large deviation lower bound, then the associated rate functional  is automatically good.

\begin{lemma}\label{l:rate_h}
Let $(\mu,J) \in \cX$ with  $I(\mu,J) < \infty$.
Then there exists $h \in L^2(Q, \tilde \mu; \Rd)$ such that 
\begin{equation}
I(\mu,J) = \frac{1}{2} \int_0^T \langle \mu(s), |h(t,\cdot)|^2 \rangle \d s
\end{equation}
and the pair $(\mu,J)$ satisfies 
\begin{equation}
\begin{system}
\partial_t \mu_t + \dive J_t = 0, \quad \mu|_{t=0} = \mu_0,\\
J_t = \left( F(\cdot, \mu_t) + h \right)\mu_t
\end{system}
\end{equation}
in the distributional formulation.
\end{lemma}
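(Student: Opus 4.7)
I plan to read the variational expression for $I(\mu,J)$ as a Legendre-type duality in the test vector field $\eta$, and isolate its linear part. Introduce
\[
L(\eta) := J(\eta) - \int_0^T \langle \mu_t,\eta(t,\cdot)\cdot \fF(\cdot,\mu_t)\rangle\,\d t,\qquad \eta \in C_c^\infty(Q;\R^d).
\]
This is well defined because $\fF$ is continuous on $\R^d\times\PP_1(\R^d)$ and $t\mapsto\mu_t$ lies in $C([0,T];\PP_1(\R^d))$, so $(t,x)\mapsto \fF(x,\mu_t)\cdot\eta(t,x)$ is bounded with compact support. Substituting $\lambda\eta$ into the variational formula for $I$ gives, for every $\lambda\in\R$,
\[
I(\mu,J)\;\ge\;\lambda L(\eta)-\frac{\lambda^2}{2}\int_Q|\eta|^2\,\d\tilde\mu,
\]
and optimising in $\lambda$ yields the key bound $|L(\eta)|^2\le 2\,I(\mu,J)\,\|\eta\|_{L^2(\tilde\mu)}^2$, so that $L$ is continuous in the $L^2(Q,\tilde\mu;\R^d)$-norm.

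Next I would extend and represent $L$. Since $\tilde\mu$ is a finite Borel measure on the locally compact space $Q$, the space $C_c^\infty(Q;\R^d)$ is dense in $L^2(Q,\tilde\mu;\R^d)$; extending $L$ by continuity and applying the Riesz representation theorem produces a unique $h\in L^2(Q,\tilde\mu;\R^d)$ with $L(\eta)=\int_Q h\cdot\eta\,\d\tilde\mu$ for every $\eta\in L^2(Q,\tilde\mu;\R^d)$. Equivalently, on $C_c^\infty$ test vector fields,
\[
J(\eta) \;=\; \int_Q \bigl(\fF(\cdot,\mu_t)+h(t,x)\bigr)\cdot\eta(t,x)\,\d\tilde\mu(t,x),
\]
which reads as the disintegration $J_t=(\fF(\cdot,\mu_t)+h)\mu_t$. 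The continuity equation and initial datum for $(\mu,J)$ come for free: they are built into the definition of $I$ through the explicit constraint, and hence are enforced by $I(\mu,J)<\infty$.

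Finally I plug the representation back into the variational formula to compute the value. The expression collapses to
\[
I(\mu,J)\;=\;\sup_{\eta\in C_c^\infty(Q;\R^d)}\int_Q\!\bigl(h\cdot\eta-\tfrac12|\eta|^2\bigr)\,\d\tilde\mu.
\]
The pointwise bound $h\cdot\eta-\tfrac12|\eta|^2\le\tfrac12|h|^2$ gives $I(\mu,J)\le\tfrac12\|h\|_{L^2(\tilde\mu)}^2$, while approximating $h$ in $L^2(\tilde\mu)$ by a sequence $\eta_n\in C_c^\infty(Q;\R^d)$ makes the reverse inequality tight. Combining the two inequalities yields the asserted $I(\mu,J)=\tfrac12\int_0^T\langle\mu_t,|h(t,\cdot)|^2\rangle\,\d t$.

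The only mildly delicate step I anticipate is the density of $C_c^\infty(Q;\R^d)$ in $L^2(Q,\tilde\mu;\R^d)$, which requires mollification in $(t,x)$ combined with a cut-off near the boundaries $t=0,T$ of the cylinder; everything else is routine duality and approximation.
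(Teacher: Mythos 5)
Your argument is correct and is essentially the paper's own proof: rescale the test field, optimise the scalar to get $|L(\eta)|^2\le 2\,I(\mu,J)\,\|\eta\|^2_{L^2(\tilde\mu)}$, extend by density and apply Riesz to obtain $h$, then complete the square to identify the value of $I$. The one point you flag as delicate (density of $C^\infty_c(Q;\R^d)$ in $L^2(Q,\tilde\mu;\R^d)$) is indeed used implicitly in the paper as well, and your mollification-plus-cutoff remedy is the standard fix.
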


\begin{proof}
The variational formulation of the rate functional \eqref{rate_variational} and the linearity of the map $\eta \mapsto J(\eta)$ assures that for any $c \in \R$
\begin{equation}\label{eq:I_c}
cJ(\eta)  - c\int_0^T \langle \mu_t, \eta(t, \cdot) \cdot \fF(\cdot, \mu_t) \rangle \d t - \frac{c^2}{2}\int_0^T \langle \mu_t, \left| \eta(t,\cdot)  \right|^2 \rangle \d t \leq  I(\mu,J), \qquad \forall \, \eta \in C^{\infty}_c(Q;\R^d) 
\end{equation}
The maximum on the left-hand side is reached in correspondence with
\[c = \frac{\left| J(\eta) - \int_0^T \langle \mu_t, \eta(t, \cdot) \cdot \fF(\cdot, \mu_t) \rangle \d t \right|^2}{\int_0^T \langle \mu_t, \left| \eta(t,\cdot)  \right|^2 \rangle \d t},\] 
substituting it in \eqref{eq:I_c} we get 
\begin{equation}
\left| J(\eta) - \int_0^T \langle \mu_t, \eta(t, \cdot) \cdot \fF(\cdot, \mu_t) \rangle \d t \right|^2 \leq 2I(\mu,J) \int_0^T \langle \mu_t, \left| \eta(t,\cdot)  \right|^2 \rangle \d t,
\end{equation}
whence $\eta \mapsto J(\eta) - \int_0^T \langle \mu_t, \eta(t, \cdot) \cdot \fF(\cdot, \mu_t) \rangle \d t$ is a bounded linear functional on $C^{\infty}_c(Q,\tilde \mu;\Rd)$ which can be extended to $L^2( Q,\tilde \mu; \Rd)$.
Thanks to the Riesz representation theorem there exists $\hh \in L^2( Q,\tilde \mu; \Rd)$ such that 
\begin{equation}
J(\eta) - \int_0^T \langle \mu_t, \eta(t, \cdot) \cdot \fF(\cdot, \mu_t) \rangle \d t  = \int_0^T \langle \mu_t, \eta(t, \cdot) \cdot \hh(t, \cdot) \rangle \d t. 
\end{equation}
This implies that
\begin{equation}
J(\eta) = \int_0^T \langle \mu_t, \eta(t, \cdot) \cdot \left( \fF(\cdot, \mu_t) + \hh(t, \cdot) \right) \rangle dt  
\end{equation}
and $(\mu,J)$ satisfies $\partial_t \mu_t  + \dive \left( \fF(\cdot, \mu_t)\mu_t + \hh(t,\cdot)\mu_t \right)=0$ in a distributional sense \eqref{ec_v}.
Given such a solution $(\mu,J)$ we also get
\begin{equation}
\begin{split}
I(\mu,J)  &= \sup_{\eta \in C^\infty_c(Q;\Rd)} \Big\lbrace \int_0^T \langle \mu_t, \eta(t, \cdot) \cdot \hh(t, \cdot) \rangle \d t - \frac{1}{2}\int_0^T \langle \mu_t, \left| \eta(t,\cdot)  \right|^2 \rangle \d t \Big\rbrace\\
&= \frac{1}{2}\int_0^T \langle \mu_t, \left| \hh(t,\cdot)  \right|^2 \rangle \d t - \inf_{\eta \in C^\infty_c(Q;\Rd)} \frac{1}{2} \int_0^T \langle \mu_t, \left| \hh(t, \cdot) - \eta(t,\cdot)  \right|^2 \rangle \d t \\
&= \frac{1}{2}\int_0^T \langle \mu_t, \left| \hh(t,\cdot)  \right|^2 \rangle \d t,
\end{split}
\end{equation}
which is the required result.
\end{proof}

\begin{lemma}\label{goodness}
Given $l \in \R_+$ define $\cX_l:= \lbrace (\mu,J) \in \cX: \int_{\R^d}|x|^2 \d\mu_0(x) \leq l \rbrace \subset \cX$.  
Then, for any $l \in \R_+$ the rate functional $I: \cX_l \to [0,+\infty]$ is good, i.e. has compact sublevels.
\end{lemma}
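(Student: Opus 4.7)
The plan is to fix $\alpha \geq 0$ and show that the sublevel set $S_\alpha := \{(\mu,J) \in \cX_l : I(\mu,J) \leq \alpha\}$ is contained in a metrizable compact subset of $\cX$ on which it is closed. The argument has three ingredients: a uniform moment/equicontinuity estimate for the $\mu$-component, a uniform $\hH^{-s}$-bound for the $J$-component, and lower semicontinuity of $I$.

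By Lemma \ref{l:rate_h}, every $(\mu,J) \in S_\alpha$ admits a representation $J = \hh\tilde\mu$ with $\hh \in L^2(Q,\tilde\mu;\R^d)$, $\frac{1}{2}\int_0^T \langle\mu_t,|\hh|^2\rangle\d t \leq \alpha$, and $(\mu,\hh)$ solving the continuity equation with drift $\fF(\cdot,\mu_t) + \hh$. Testing against $|x|^2$, using the linear growth of $\fF$ from Hypothesis \ref{h1} and Young's inequality to absorb the cross-term with $\hh$, and closing with the Gronwall estimate \eqref{gronwall} and the initial bound $\int|x|^2\d\mu_0 \leq l$, I would obtain a uniform estimate $\sup_{(\mu,J) \in S_\alpha}\sup_{t \in [0,T]} \int|x|^2\d\mu_t \leq C(l,\alpha,A,T)$, which delivers tightness and uniform integrability of the slices $\{\mu_t\}$ in $\PP_1(\R^d)$. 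Testing the continuity equation between times $s<t$ against $\phi \in C^1_b(\R^d)$ with $\|\nabla\phi\|_\infty \leq 1$ and applying Cauchy--Schwarz to the $\hh$-contribution gives $|\langle\mu_t - \mu_s,\phi\rangle| \leq C|t-s|^{1/2}$, whence by Kantorovich--Rubinstein duality $W_1(\mu_t,\mu_s) \leq C|t-s|^{1/2}$ uniformly on $S_\alpha$. Arzela--Ascoli in $C([0,T];\PP_1(\R^d))$ then places the $\mu$-components of $S_\alpha$ in a compact set.

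For the current, the Sobolev embedding $H^{s_2}(\R^d) \hookrightarrow L^\infty(\R^d)$ (valid since $s_2 > d/2$), combined with $\int_0^T\int|\hh|^2\d\mu_t\d t \leq 2\alpha$, yields
\[
|J(\eta)| \leq \int_0^T \|\eta(t,\cdot)\|_\infty \Big(\int|\hh|^2\d\mu_t\Big)^{1/2}\d t \lesssim \|\eta\|_{\hH^s}\sqrt{2\alpha},
\]
so that the $J$-components lie in a weakly-$*$ compact (and, by separability of $\hH^s$, metrizable) ball of $\hH^{-s}$. Combining the two, $S_\alpha$ is contained in a metrizable compact subset $\cK \subset \cX$. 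It remains to show that $I$ is lower semicontinuous on $\cX$. For each fixed $\eta \in C^\infty_c(Q;\R^d)$ and $\phi \in C^\infty_c([0,T]\times\R^d)$ the functionals $I^\eta_1$ and $I^\phi_2$ from the proof of Proposition \ref{p:upper_bound_compacts} are continuous on $\cX$: the $J$-terms are continuous by the weak topology on $\hH^{-s}$ and the $\mu$-terms by uniform $W_1$-convergence together with joint continuity of $\fF$ and compact support of the test functions. The continuity equation constraint is then the intersection of the closed sets $\{I^\phi_2 = 0\}$, and $I$ is a supremum over $\eta$ of continuous functionals restricted to this closed set, hence lower semicontinuous; so $S_\alpha$ is closed in $\cK$ and therefore compact.

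The most delicate technical point is the continuity of $\mu \mapsto \int_0^T \langle\mu_t, \eta(t,\cdot)\cdot\fF(\cdot,\mu_t)\rangle\d t$ under uniform $W_1$-convergence, which requires passing a $\mu$-dependent integrand to the limit against a varying measure $\mu_t$. This is handled by the joint continuity of $\fF$ on $\R^d\times\PP_1(\R^d)$: on the compact $x$-support of $\eta$, $\fF(\cdot,\mu^n_t) \to \fF(\cdot,\mu_t)$ uniformly along a $W_1$-convergent sequence $\mu^n_t \to \mu_t$, which combined with narrow convergence of $\mu^n_t$ allows the exchange of limit and integral pointwise in $t$, and dominated convergence then handles the $t$-integration using the uniform moment bound from the first paragraph.
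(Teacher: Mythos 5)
Your proof is correct in outline and reaches the same conclusion, but it implements the key steps quite differently from the paper, in a more self-contained way. The paper obtains the uniform second-moment bound \eqref{est:moments} by citing \cite[Prop.~5.3]{fornasier2018mean}, proves equicontinuity by approximating each $\mu^n$ with empirical measures $\mu^{n,k}$ transported along the characteristics of $\vv^n$ (borrowing the machinery of Theorem \ref{ex_uniq_vlasov}) and passing to the limit in $k$ via \cite[Prop.~7.1.3]{ambrosio2008gradient}, and handles the current by identifying the weak$^*$ limit of $J^n=\vv^n\mu^n$ through the argument of \cite[Thm~5.4.4]{ambrosio2008gradient} before upgrading convergence from $C^\infty_c$ test functions to all of $\hH^s$ by density. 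You instead extract everything directly from the continuity equation: Gronwall for the moments, Cauchy--Schwarz on the $\hh$-term plus Kantorovich--Rubinstein for a H\"older-$1/2$ equicontinuity estimate, and a uniform $\hH^{-s}$-norm bound followed by Banach--Alaoglu for the current, so that no limit identification is needed. Your route is more elementary and avoids the external references; the paper's route produces a slightly stronger (Lipschitz-in-time) modulus and reuses machinery it needs anyway in the Appendix. You also make the lower semicontinuity of $I$ explicit, which the paper leaves implicit in ``and we conclude'' (it is however already invoked in Proposition \ref{p:upper_bound_compacts}).

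Two small points to fix. First, your representation is internally inconsistent: if the drift is $\fF(\cdot,\mu_t)+\hh$ and $\partial_t\mu_t+\nabla\cdot J=0$, then $J=(\fF(\cdot,\mu_t)+\hh)\tilde\mu$, not $J=\hh\tilde\mu$ (Lemma \ref{l:rate_h} is the correct statement; the formula in Theorem \ref{t:LDP}(i) is loose on this point). Consequently your bound on $|J(\eta)|$ must also carry the term $\int_0^T\langle\mu_t,|\eta|\,|\fF(\cdot,\mu_t)|\rangle\,\d t$, which is controlled by the moment bound and the linear growth of $\fF$ exactly as in \eqref{uniform_bound_vn}, so the conclusion stands. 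Second, testing against $|x|^2$ and against non-compactly-supported Lipschitz functions requires the usual truncation argument, which you should at least mention since the distributional formulation only admits $C^\infty_c$ test functions.
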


\begin{proof}
Let $\cA := \lbrace (\mu,J) \in \cX: I(\mu,J) \leq a < +\infty\rbrace$. 
If $(\mu,J) \in \cA$ we know from Lemma \ref{l:rate_h} that there exists $\hh \in L^2(Q, \tilde \mu; \Rd)$ such that $\partial_t \mu_t + \dive \left( \left( \fF(\cdot, \mu_t) + \hh(t,\cdot) \right)\mu_t \right) = 0$ and 
\begin{equation}
I(\mu,\hh) = \frac{1}{2}\int_0^T \langle \mu_t, \left| \hh(t,\cdot)  \right|^2 \rangle \d t.
\end{equation}
Fix $ l \in \R_+$ and take a sequence $(\mu^n,J^n= \hh^n\mu^n) \in \cA \cap \cX_l$.
From \cite[Prop.~5.3]{fornasier2018mean} there exists a constant $\tilde C$, depending on $C, T, a$ and $\int_{\R^d}|x|d\mu_0^n(x)$ such that 
\begin{equation}\label{est:moments}
\sup_{t \in [0,T]} \int_{\R^d} |x|^2 \d\mu^n_t(x) \leq \tilde C \left( 1+ \int_{\R^d} |x|^2  \d\mu_0^n(x)\right) \leq \tilde C (1+l).
\end{equation}
To get the equicontinuity property, let us follow the strategy of the proof of Theorem \ref{ex_uniq_vlasov} in Appendix (part (i), step 3).
In particular, for every $n \in \N$ we define $(\mu^{n,k})_{k \in\N}$ as in \eqref{empirical_k} (using the characteristic equations associated to $\vv^n$) and from \eqref{est:moments}, \eqref{theta_estimate} and \eqref{conv_wass_muk} we get 
\begin{equation}\label{prop_mu_nk}
\sup_{n,k \in \N} \sup_{t\in [0,T]} \int_\Rd |x|^2 \d \mu_t^{n,k}(x) < +\infty, \qquad \lim_{k \to +\infty} \sup_{t \in [0,T]} W_1(\mu^{n,k}_t, \mu^n_t) = 0
\end{equation}
Now, for the empirical measures $(\mu^{n,k})_{k \in \N}$ the equicontinuity follows from the same computation as in \eqref{equicontinuity_k}:
if $s \leq t \in [0,T]$
\begin{equation}
\begin{split}
W_1(\mu^{n,k}_s, \mu^{n,k}_t) \leq |t-s| \int_\Rd |x| \d \mu_t^{n,k}(x) \leq l |t-s|, 
\end{split}
\end{equation} 
where $l \in \R_+$ does not depend neither on $k$ nor on $n$, thanks to \eqref{prop_mu_nk}.
Employing \cite[Prop.~7.1.3]{ambrosio2008gradient} we have the finally get
\begin{equation}\label{equicontinuity_n}
W_1(\mu^{n}_s, \mu^{n}_t) \leq \liminf_{k \to +\infty} W_1(\mu^{n,k}_s, \mu^{n,k}_t) \leq l |t-s|.
\end{equation}
Thanks to estimate \eqref{est:moments} and \eqref{equicontinuity_n}, Ascoli-Arzel\`a theorem provides the required compactness.
%denote by $\rho^\varepsilon(x):= (2\pi\varepsilon)^{-d/2}\exp(-|x|^2/2\varepsilon)$ the Gaussian density with variance $\varepsilon$ and for every $(\mu^n,\hh^n) \in AC([0,T]; \PP_1(\R^d)) \times L^2(Q,\tilde \mu;\R^d)$ set
%\begin{equation}
%\mu^{n,\varepsilon}_t:= \mu^n_t * \rho^\varepsilon, \qquad \vv^{n,\varepsilon}_t:= \frac{(\vv^n_t \mu^n_t) *\rho^\varepsilon}{\mu^{n,\varepsilon}_t},
%\end{equation}
%where we used the notation 
%and \cite[Lem.~8.1.9]{ambrosio2008gradient} guarantees that
%\begin{equation}\label{v_eps_less_v}
%\int_{\R^d}|\vv^{n,\varepsilon}(t,x)|^2 \d \mu_t^{n,\varepsilon}(x) \leq \int_{\R^d}|\vv^n(t,x)|^2 \d \mu^n_t(x), \qquad \text{ for every } t \in (0,T). 
%\end{equation} 

For what concerns the sequence $J^n$, let us denote $\vv^n(t,x):= \fF(x,\mu^n_t) + \hh^n(t,x)$ so that
\begin{equation}
J^n(\phi) = \int_0^T \langle \mu^n_t, \phi(t, \cdot) \cdot \vv^n(t,\cdot) \rangle \d t, \qquad  \forall \,\phi \in C_c^\infty(Q;\R^d),\; \forall \, n \in \N,  
\end{equation}
Hypothesis \ref{h1} and estimate \eqref{est:moments} guarantee that 
 \begin{equation}\label{uniform_bound_vn}
 \sup_{n \in \N}\int_0^T \int_{\R^d}|\vv^n(t,x)|^2 \d \mu^n_t(x) \d t< +\infty.
 \end{equation}

Using the same strategy as in the proof of \cite[Thm~5.4.4]{ambrosio2008gradient} we can deduce the existence of a map $\vv: [0,T] \times \R^d \to \R^d$, such that $\vv \in L^2(Q, \tilde \mu; \R^d)$ and 
\begin{equation}
\lim_{n \to +\infty} \int_0^T \langle \mu_t^n, \phi(t,\cdot) \cdot \vv^n(t,\cdot) \rangle \d t  = \int_0^T \langle \mu_t, \phi(t,\cdot) \cdot \vv(t,\cdot)\rangle \d t,
\end{equation}
for every $\phi \in C_c^\infty(Q; \R^d)$. 
Moreover, the continuity of the map $F$ with respect to the Wasserstein distance implies that  $\vv(t,x) := \fF(x, \mu_t) + \hh(t, x)$. 
This guarantees the weak* convergence (against test function $\phi \in C_c^\infty(Q; \R^d)$) of the $\R^d$-valued measures $J^n:= \vv^n\mu^n$ towards the limit $J:= \vv\mu$.

Let us now fix $\phi \in \hH^s$. 
By density there exists a sequence $\phi^k \in C_c^\infty(Q;\R^d)$ such that $\phi^k \to \phi$ in $\hH^s$ and
\begin{equation}
J^n(\phi) = \int_0^T \langle \mu_t^n, \left( \phi(t,\cdot) - \phi^k(t,\cdot)\right) \cdot \vv^n(t,\cdot) \rangle \d t + \int_0^T \langle \mu_t^n, \phi^k(t,\cdot) \cdot \vv^n(t,\cdot) \rangle \d t. 
\end{equation}
To pass to the limit as $k \uparrow +\infty$ observe that $\hH^s \hookrightarrow C^0(Q)$, so that
\begin{equation}
\begin{split}
\int_0^T &\langle \mu_t^n, \left| \phi(t,\cdot) - \phi^k(t,\cdot)\right|  \left| \vv^n(t,\cdot) \right| \rangle \d t \\
&\leq  \left(\int_0^T \langle \mu_t^n, \left| \phi(t,\cdot) - \phi^k(t,\cdot)\right|^2 \rangle \d t \right)^{1/2}\left( \int_0^T \langle \mu_t^n,  \left| \vv^n(t,\cdot) \right|^2 \rangle \d t\right)^{1/2} \\
&\leq c \| \phi - \phi^k \|_{C^0(Q)} \longrightarrow 0, \qquad \text{ as } k \uparrow + \infty,
\end{split}
\end{equation}
where in the last inequality we used the uniform bound \eqref{uniform_bound_vn}. 
As a consequence we have $J^n(\phi) \to J(\phi)$ for every $\phi \in \hH^s$, and we conclude.
  \end{proof}

\section{Large deviations lower bound}

This section is devoted to the proof of the large deviation lower bound. 
The proof is divided in two main parts: a first analytical step exploiting the deterministic recovery sequence obtained in \cite{fornasier2018mean} and a second probabilistic part in which a suitable tilt of the law associated to \eqref{eq:state} is taken into account.   

Notice that the construction of the recovery sequence in \cite{fornasier2018mean} requires the initial data to have uniformly compact support. This motivates the introduction of Hypothesis \ref{h.comp} hereinafter.
For what concerns the velocity field $F$, we will assume Hypothesis \ref{h.lip}, which guarantees the uniqueness of solutions to \eqref{eq:state} and of  the Vlasov equation 
\begin{equation}
\partial_t \mu_t + \dive \left( (F(x,\mu_t)) \mu_t \right) = 0,
\end{equation}
as it is stated in Theorem \ref{ex_uniq_vlasov} in Appendix.

%\subsection{Compactly supported initial data}

To derive the large deviation lower bound we profit by the link with the $\Gamma$-convergence of the relative entropy functional (we refer to e.g.\cite{mariani2012gamma} for  a detailed analysis on this topic).
The following general theorem has been proved in \cite[Thm.~3.4]{mariani2012gamma} in a Polish space setting, but the proof also applies to the setting of a completely regular topological space.  

\begin{theorem}\label{t.lb_entropia}
Let $\cP_\varepsilon$ be a family of probability measures on a completely regular topological space $X$ and $\lbrace \beta_\varepsilon \rbrace_\varepsilon$ such that $\lim_{\varepsilon \downarrow 0} \beta_\varepsilon = 0$.
Let also $I: X \to[0,+\infty]$ be a lower semicontinuous functional. 
Then the following are equivalent
\begin{itemize}
\item[(a)] $\cP_\varepsilon$ satisfies a large deviations lower bound  with speed $\beta_\varepsilon$ and rate functional $I$;
\item[(b)] For any point $x \in X$ there exists a sequence $\cQ_\varepsilon^x \in \PP(X)$ weakly converging to the Dirac measure  $\delta_x$ such that  
\begin{equation}
\limsup_{\varepsilon \downarrow 0} \beta_\varepsilon \ent(\cQ_\varepsilon^x | \cP_\varepsilon) \leq I(x)
\end{equation}
\end{itemize}  
\end{theorem}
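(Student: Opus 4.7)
\textbf{Proof plan for Theorem \ref{t.lb_entropia}.}

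The result is a classical equivalence between the large deviations lower bound and the existence of entropy-controlled recovery sequences, so I will give a plan for each of the two implications, relying on the entropy inequality $\cQ(A)\leq \frac{\ent(\cQ|\cP)+\log 2}{\log(1+\cP(A)^{-1})}$ recalled in Section~\ref{sec.meas-theory}.

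\textbf{Direction (b)$\Rightarrow$(a).} This is the direction that will be applied in the paper and is the more transparent one. Fix an open set $U\subset X$ and take $x\in U$ with $I(x)<+\infty$ (if no such $x$ exists, the lower bound on $U$ is trivial). By hypothesis there exists $\cQ_\varepsilon^x\in\PP(X)$ with $\cQ_\varepsilon^x\to\delta_x$ weakly and $\limsup_{\varepsilon\downarrow0}\beta_\varepsilon\ent(\cQ_\varepsilon^x|\cP_\varepsilon)\leq I(x)$. Since $X$ is completely regular and $x\in U$ open, I would pick a continuous $\varphi:X\to[0,1]$ with $\varphi(x)=1$ and $\varphi\equiv 0$ outside $U$; then $\cQ_\varepsilon^x(U)\geq \int\varphi\,\d\cQ_\varepsilon^x\to\varphi(x)=1$, hence $\cQ_\varepsilon^x(U)\to 1$. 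Plugging $A=U$, $\cQ=\cQ_\varepsilon^x$, $\cP=\cP_\varepsilon$ in the entropy inequality and rearranging,
\begin{equation*}
-\log\cP_\varepsilon(U)\;\leq\;\log(1+\cP_\varepsilon(U)^{-1})\;\leq\;\frac{\ent(\cQ_\varepsilon^x|\cP_\varepsilon)+\log 2}{\cQ_\varepsilon^x(U)}.
\end{equation*}
Multiplying by $\beta_\varepsilon$, taking $\limsup_{\varepsilon\downarrow 0}$ and using $\cQ_\varepsilon^x(U)\to 1$ together with $\beta_\varepsilon\log 2\to 0$ gives $\limsup_\varepsilon(-\beta_\varepsilon\log\cP_\varepsilon(U))\leq I(x)$, i.e. $\liminf_\varepsilon\beta_\varepsilon\log\cP_\varepsilon(U)\geq -I(x)$. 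Since $x\in U$ was arbitrary, we obtain $\liminf_\varepsilon\beta_\varepsilon\log\cP_\varepsilon(U)\geq -\inf_{x\in U}I(x)$, which is the lower bound.

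\textbf{Direction (a)$\Rightarrow$(b).} For $x$ with $I(x)=+\infty$ any sequence $\cQ_\varepsilon^x$ concentrating at $x$ (e.g.\ $\cQ_\varepsilon^x=\delta_x$) works. So assume $I(x)<+\infty$ and fix an open neighborhood $U\ni x$. The natural candidate is the conditional measure
\begin{equation*}
\cQ_\varepsilon^{x,U}(A):=\frac{\cP_\varepsilon(A\cap U)}{\cP_\varepsilon(U)},
\end{equation*}
which is well defined for $\varepsilon$ small since the lower bound yields $\cP_\varepsilon(U)>0$ eventually. A direct computation gives $\ent(\cQ_\varepsilon^{x,U}|\cP_\varepsilon)=-\log\cP_\varepsilon(U)$, so the LD lower bound and the trivial inequality $\inf_U I\le I(x)$ imply $\limsup_\varepsilon\beta_\varepsilon\ent(\cQ_\varepsilon^{x,U}|\cP_\varepsilon)\leq\inf_U I\leq I(x)$. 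It remains to merge these partial recovery sequences into a single sequence $\cQ_\varepsilon^x$ concentrating at $x$. I would pick a decreasing sequence $U_n\downarrow\{x\}$ of open neighborhoods (separating $x$ from closed sets via the Tychonoff property) and, using $\liminf_\varepsilon\beta_\varepsilon\log\cP_\varepsilon(U_n)\geq -\inf_{U_n}I\ge -I(x)$, select a decreasing sequence $\varepsilon_n\downarrow 0$ so that $\beta_\varepsilon(-\log\cP_\varepsilon(U_n))\leq I(x)+1/n$ for every $\varepsilon\in(\varepsilon_{n+1},\varepsilon_n]$. Setting $\cQ_\varepsilon^x:=\cQ_\varepsilon^{x,U_n}$ on $(\varepsilon_{n+1},\varepsilon_n]$ gives $\limsup_\varepsilon\beta_\varepsilon\ent(\cQ_\varepsilon^x|\cP_\varepsilon)\leq I(x)$, and the weak convergence $\cQ_\varepsilon^x\to\delta_x$ follows because $\cQ_\varepsilon^x$ is supported in $U_n$ which shrinks to $\{x\}$.

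\textbf{Main obstacle.} The implication (b)$\Rightarrow$(a) is essentially a mechanical application of the entropy inequality and is the only part used later in the paper; the delicate point is in (a)$\Rightarrow$(b), namely the diagonal construction in a non-metrizable Tychonoff space. In the setting of the paper this is harmless because $\cX$ has metrizable compacts (so a countable neighborhood basis at each relevant $x$ is available after restricting to a sublevel of $I$), but in full generality one should either invoke lower semicontinuity of $I$ with nets or restrict to points admitting a countable base, as is implicitly done in \cite{mariani2012gamma}.
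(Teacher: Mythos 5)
Your argument is correct, but note that the paper does not actually prove this theorem: it is quoted from \cite[Thm.~3.4]{mariani2012gamma} with the remark that the Polish-space proof carries over to completely regular spaces. Your reconstruction is precisely the standard argument behind that result — the entropy inequality $\cQ(A)\leq(\ent(\cQ|\cP)+\log 2)/\log(1+\cP(A)^{-1})$ combined with $\cQ_\varepsilon^x(U)\to 1$ for (b)$\Rightarrow$(a), and the conditioned measures $\cP_\varepsilon(\cdot\cap U)/\cP_\varepsilon(U)$ with a diagonal extraction for (a)$\Rightarrow$(b) — so there is nothing to contrast. The one caveat you raise yourself is the right one: the diagonalization in (a)$\Rightarrow$(b) needs a countable neighborhood base at $x$ (or a net/lsc argument), but this is immaterial here since only the implication (b)$\Rightarrow$(a) is used in the paper, in the proof of Theorem \ref{t.LDP_lb_1}.
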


%Given $(\mu, J) \in \cX$ such that $I (\mu, J) < +\infty$, from Lemma \ref{l:rate_h} there exists $h \in L^2(d\mu_t \otimes dt, Q)$ such that $dJ = h \,d\mu$.
%If we define $h^n:= h \wedge n$, then
%\begin{equation}
%\int_0^T \int_{\R^d} \phi(t,x) h^n(t,x) d\mu_t dt \to \int_0^T \int_{\R^d} \phi(t,x) h(t,x) d\mu_t dt, \qquad \forall \, \phi \in C_c^\infty(Q;\R^d),
%\end{equation} 
%thanks to Lebesgue dominated convergence theorem, whence $J^n \weakto^* J $ in $\MM(Q;\R^d)$.
%Moreover, the integrability $h \in L^2(d\mu_t \otimes dt, Q)$ also permit the application of Fatou's lemma to get
%\begin{equation}
%\limsup_{n \to +\infty} \int_0^T \int_{\R^d} |h^n(t,x)|^2 d\mu_t dt \leq \int_0^T \int_{\R^d} |h(t,x)|^2 d\mu_t dt. 
%\end{equation}
%Exploiting the dense embedding $C_b(Q; \R^d) \hookrightarrow L^2(d\mu_t \otimes dt, Q)$, for each $n$ fixed there exists a sequence of continuous functions $h^{n,m} \to h^n$, as $m \uparrow +\infty$.
%The application of a diagonal argument concludes the proof.

The application of the above result relies on the construction of a suitable recovery sequence for which the entropy remains bounded.
Such a sequence can be obtained starting from the deterministic guess provided in \cite[Thm.~3.2]{fornasier2018mean} which we briefly report here:

\begin{theorem}\label{t:recovery_det}
Let $(\mu,\hh) \in AC([0,T]; \PP_1(\R^d)) \times L^2( Q,\tilde \mu; \Rd)$ be a distributional solution to $\partial_t \mu + \nabla \cdot J = 0$, with 
$J = \hh \tilde \mu \ll \tilde \mu$, and let $\mu^N_0 \in \PP^N(\R^d)$ be a sequence of initial measures with uniformly compact support such that  $W_1(\mu^N_0, \mu_0) \to 0$ as $N \uparrow +\infty$.
Then there exists a sequence $(\yy^N,\hh^N) \in AC([0,T]; (\R^d)^N) \times L^1([0,T];(\R^d)^N)$ such that 
\begin{itemize}
\item[(a)] $\sigma[\yy_0^N] = \mu_0^N$, for every $N \in \N$;
\item[(b)] $\hh^N(t,x) := \hh(t,x) + \fF(x,\sigma^N_t) - \fF(x,\mu_t)$ for every $x \in \R^d$, $t \in [0,T]$; 
\item[(c)] the equation
\begin{equation}
\label{eq:recovery}
\frac{\d}{\d t} y_i^{N}(t)=\fF(y_i^{N}(t),\yy^{N}(t)) + \hh^N(t,y_i^N(t)) ,\quad i=1,\cdots, N,\qquad
  \yy^N(0)=\xx^N_0,
\end{equation} 
holds for every $N \in \N$; 
\item[(d)] The sequence $\sigma^N:= \sigma[\yy^N]$ satisfies 
\begin{equation}\label{eq:control_measures_n}
 \sup_{N \in \N} \sup_{t \in [0,T]} \int_{\R^d} |y|^2 \d \sigma^N(y) < +\infty,
\end{equation}
moreover $\sigma^N \to \mu$ in $C([0,T];\PP_1(\R^d))$ and $\hh^N\sigma^N \weakto^* \hh\mu$ in $\MM(Q; \R^d)$;
\item[(e)] if we denote by $I:AC([0,T]; \PP_1(\R^d)) \times \MM(Q; \R^d)$ the functional
\begin{equation}
I(\pi, \nnu) := 
\begin{system}
I(\pi, \gg):= \int_0^T \langle \pi_t, |g(t,\cdot)|^2 \rangle \d t,\qquad \text{ if } \d \nnu = \gg \d \pi,\; g \in L^2( Q,\tilde \pi; \Rd) \\
+ \infty \qquad \qquad \qquad \qquad  \qquad \qquad \;\; \text{ otherwise},
\end{system}
\end{equation}
then the following inequality holds
\begin{equation}
\limsup_{N \to \infty} I(\sigma^N,\hh^N) \leq I(\mu,\hh).
\end{equation}  
%where we used the notation  $I(\mu,\hh) := I(\mu, \hh\mu)$.
\end{itemize}
\end{theorem}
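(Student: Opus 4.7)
The plan is to exploit the algebraic identity forced by item (b): setting $\vv(t,x) := \fF(x,\mu_t) + \hh(t,x)$, the prescription for $\hh^N$ is engineered precisely so that $\fF(y,\sigma_t^N) + \hh^N(t,y) = \vv(t,y)$, which makes the $N$-particle system (c) collapse into $N$ independent characteristic ODEs $\dot y_i^N = \vv(t,y_i^N)$ of the limit vector field, with no residual dependence on the empirical measure $\sigma^N$. The whole construction then reduces to finding integral curves of $\vv$ that start at the prescribed points $x_i^N(0)$.

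First, since $\vv \in L^2(Q,\tilde\mu;\R^d)$ (because $\hh \in L^2(Q,\tilde\mu;\R^d)$ and $\fF$ has linear growth by Hypothesis \ref{h.lip}), I would invoke Ambrosio's superposition principle (see e.g.~\cite[Thm.~8.2.1]{ambrosio2008gradient}) applied to the continuity equation $\partial_t \mu_t + \dive(\vv \mu_t) = 0$, producing a probability measure $\eta$ on $\operatorname{AC}([0,T];\R^d)$ concentrated on integral curves of $\vv$ with $(e_t)_\sharp \eta = \mu_t$ for every $t \in [0,T]$. This is the only genuinely analytic ingredient, and it is what assigns pointwise meaning to $\hh$ along trajectories, despite $\hh$ being a priori defined only $\tilde\mu$-a.e.

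Next I would discretize $\eta$ consistently with the initial data: pick an optimal $W_1$-plan $\pi^N \in \Pi(\mu_0,\mu_0^N)$, disintegrate $\eta = \int \eta_x \, \d\mu_0(x)$ along $e_0$, and sample via $\pi^N$ trajectories $\gamma_i^N$ starting at $x_i^N(0)$, setting $y_i^N := \gamma_i^N$ and defining $\hh^N$ by (b). Item (a) is then built-in, (b) is the definition, and (c) follows from the cancellation above. For (d), the linear growth of $\vv$, the uniformly compact support of $\mu_0^N$, and Gr\"onwall yield $\sup_N \sup_{t \in [0,T]} \int |y|^2 \, \d\sigma_t^N(y) < +\infty$; the coupling forces $\eta^N := \frac{1}{N}\sum_i \delta_{\gamma_i^N} \to \eta$ narrowly in $\PP(C([0,T];\R^d))$, and pushing forward by $e_t$ together with equicontinuity and the uniform moment bound upgrades this to $\sigma^N \to \mu$ in $C([0,T];\PP_1(\R^d))$. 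The weak-$*$ convergence $\hh^N \sigma^N \weakto^* \hh\mu$ in $\MM(Q;\R^d)$ follows by testing against $\phi \in C_c(Q;\R^d)$ and using $\fF(\cdot,\sigma_t^N) \to \fF(\cdot,\mu_t)$ uniformly on bounded sets via Hypothesis \ref{h.lip}.

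Finally, for (e), expanding
\begin{equation*}
I(\sigma^N,\hh^N) = \int_0^T \!\! \int_{\R^d} |\hh + \fF(\cdot,\sigma_t^N) - \fF(\cdot,\mu_t)|^2 \, \d\sigma_t^N \, \d t,
\end{equation*}
the cross term and the $|\fF(\cdot,\sigma_t^N) - \fF(\cdot,\mu_t)|^2$ contribution are both dominated uniformly in $t$ by $L^2\, W_1(\sigma_t^N,\mu_t)^2 \to 0$ via Hypothesis \ref{h.lip}. The genuinely delicate point is passing to the limit in the principal term $\int_0^T \int |\hh|^2 \, \d\sigma_t^N \, \d t \to \int_0^T \int |\hh|^2 \, \d\mu_t \, \d t$, since $\hh$ is merely $L^2$ and has no pointwise regularity. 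I would handle this by approximating $\hh$ in $L^2(Q,\tilde\mu;\R^d)$ by bounded continuous vector fields and using narrow convergence $\sigma^N \to \mu$ together with the uniform second-moment bound on $\sigma^N$ to control the approximation error. I expect this $L^2$-transfer from $\mu$ to the empirical sequence $\sigma^N$ to be the main technical obstacle, as it is essentially equivalent to a quantitative form of the superposition principle.
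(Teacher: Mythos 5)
First, a point of reference: the paper does not actually prove Theorem \ref{t:recovery_det} — it is imported verbatim from \cite[Thm.~3.2]{fornasier2018mean} ("which we briefly report here"), so your proposal has to be measured against that reference. Your overall strategy — superposition principle for $\vv=\fF(\cdot,\mu_t)+\hh$, discretization of the lifted measure, and Lipschitz control of the correction $\fF(\cdot,\sigma^N_t)-\fF(\cdot,\mu_t)$ via $W_1(\sigma^N_t,\mu_t)$ — is indeed the strategy of that reference, and your reading of item (b) (namely $\hh^N=\hh+\fF(\cdot,\mu_t)-\fF(\cdot,\sigma^N_t)$, the opposite sign of what is printed, which is what actually produces the cancellation $\fF(\cdot,\sigma^N_t)+\hh^N=\vv$ and makes (c) non-circular) is surely the intended one. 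However, two steps as written would fail. The first is the matching of the initial data in (a). The superposition measure $\eta$ only provides integral curves of $\vv$ whose starting points are distributed according to $\mu_0$; since $\hh$ is defined merely $\tilde\mu$-a.e., there is no meaningful ODE $\dot y=\vv(t,y)$ to solve from the prescribed atoms $x_i^N(0)$ of $\mu_0^N$, which in general do not lie in any $\mu_0$-full set. "Sampling via $\pi^N$ trajectories starting at $x_i^N(0)$" therefore does not produce anything: either the sampled curve starts where $\eta$ dictates (and (a) fails), or it is not an integral curve of $\vv$ (and (c) fails). Closing this gap requires an additional approximation layer (regularize $(\mu,\hh)$ so that characteristics can be launched from arbitrary compactly supported points, then diagonalize — compare Steps 2--3 in the proof of Theorem \ref{ex_uniq_vlasov} in the Appendix), not direct sampling from $\eta$.

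The second gap is the energy transfer needed for both (d) and (e). Since $\hh$ has no pointwise growth control, "linear growth of $\vv$ plus Gr\"onwall" does not give the uniform second-moment bound \eqref{eq:control_measures_n}: along a curve one only controls $\int_0^T|\hh(t,\gamma_t)|\,\d t$ through the energy, so \eqref{eq:control_measures_n} and (e) both hinge on
$\limsup_N \int_0^T\langle\sigma^N_t,|\hh(t,\cdot)|^2\rangle\,\d t\le\int_0^T\langle\mu_t,|\hh(t,\cdot)|^2\rangle\,\d t$.
Your proposed fix — approximate $\hh$ in $L^2(Q,\tilde\mu;\R^d)$ by bounded continuous fields and use narrow convergence of $\sigma^N$ — cannot work: the empirical measures are singular with respect to $\tilde\mu$, so an $L^2(\tilde\mu)$-approximation gives no control on $\int_0^T\langle\sigma^N_t,|\hh-\hh_k|^2\rangle\,\d t$. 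The correct mechanism is to build the energy convergence into the choice of the discretization of $\eta$: for the $\eta$-integrable functional $G(\gamma):=\int_0^T|\hh(t,\gamma_t)|^2\,\d t$ one selects the curves so that $\frac1N\sum_i G(\gamma_i^N)\to\int G\,\d\eta$ (law of large numbers for $\eta$-samples, or an energy-weighted quantization). This is not cosmetic — it is precisely the step that makes (d) and (e) hold simultaneously, and it is where the cited construction does its real work.
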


%\begin{remark}
%Notice that in the discrete case $$ \cite[Lem.~6.2]{fornasier2018mean} 
%\blue dire qualcosa sull'equivalenza tra costo particellare e costo di misure \nc
%\end{remark}

Now, given the controls $(\yy^N, \hh^N) \in AC([0,T]; (\R^d)^N) \times L^1([0,T];(\R^d)^N)$, we force equation \eqref{eq:state} to satisfy the following perturbed dynamic:
\begin{equation}
  \label{eq:tilted}
\d x_i^{N,\varepsilon,h}(t)=\fF^N(x_i^{N,\varepsilon,h}(t),\xx^{N,\varepsilon,h}(t))\d t + \hh^N(t,y_i^N(t)) \d t+ \sqrt{\varepsilon}\d W_i(t) ,\quad
  \xx^N(0)=\xx^N_0,
\end{equation} 
where $i=1,\ldots, N$.
Under Assumption \ref{h.lip} there exists a unique strong solution  $x_i^{N,\varepsilon,h}$ to \eqref{eq:tilted} with associated empirical measures
\begin{equation}
\mu^{N,\varepsilon,h}_t := \frac{1}{N} \sum_{i =1}^N \delta_{x_i^{N,\varepsilon,h}(t)}. 
\end{equation}

We can prove the following

\begin{proposition}\label{p:wasserstein_esitmate}
Let Hypotheses \ref{h2} and \ref{h.lip} be in force and denote by $\P^{N,\varepsilon}_h \in \PP(C([0,T]; (\R^d)^N))$ the law of the solution to equation \ref{eq:tilted} with initial distribution $\mu_0^N$. 
Then for every $\delta > 0$
\begin{equation}
\lim_{\substack{\varepsilon \to 0\\ N \to +\infty}} \P^{N, \varepsilon}_h \left( \sup_{t \in [0,T]} \frac{1}{N}\sum_{i=1}^N \big|x_i^{N,\varepsilon,h}(t) - y^N_i(t)\big|^2 > \delta \right) =0,
\end{equation}
which in particular yields
\begin{equation}
\lim_{\substack{\varepsilon \to 0\\ N \to +\infty}} \P^{N, \varepsilon}_h \left( \sup_{t \in [0,T]} W^2_2(\mu^{N,\varepsilon,h}_t, \sigma^{N}_t) > \delta \right) =0.
\end{equation}
\end{proposition}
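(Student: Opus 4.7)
The plan is to study the pathwise error process $e_i(t) := x_i^{N,\varepsilon,h}(t) - y_i^N(t)$, which starts at $e_i(0) = 0$ thanks to property (a) of Theorem \ref{t:recovery_det} combined with Hypothesis \ref{h.comp}. Subtracting \eqref{eq:recovery} from \eqref{eq:tilted}, the control terms $h^N(t,y_i^N(t))$ cancel exactly, so $e_i$ solves
\begin{equation*}
\d e_i(t) = \bigl[\fF(x_i^{N,\varepsilon,h}(t),\xx^{N,\varepsilon,h}(t)) - \fF(y_i^N(t),\yy^N(t))\bigr]\d t + \sqrt{\varepsilon}\,\d W_i(t).
\end{equation*}
Apply It\^o's formula to $|e_i|^2$, sum over $i$ and normalize, introducing $u(t) := \frac{1}{N}\sum_{i=1}^N|e_i(t)|^2$ and the martingale $M_t := \frac{2\sqrt{\varepsilon}}{N}\sum_i\int_0^t e_i(s)\cdot\d W_i(s)$.

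The key analytic step is the drift estimate. Viewing $\fF$ as a symmetric function of the empirical measure, Hypothesis \ref{h.lip} gives
\begin{equation*}
\bigl|\fF(x_i^{N,\varepsilon,h},\mu_t^{N,\varepsilon,h}) - \fF(y_i^N,\sigma_t^N)\bigr| \le L\bigl(|e_i(t)| + W_1(\mu_t^{N,\varepsilon,h},\sigma_t^N)\bigr),
\end{equation*}
and the identity coupling yields $W_1(\mu_t^{N,\varepsilon,h},\sigma_t^N)\le \frac{1}{N}\sum_j|e_j(t)| \le \sqrt{u(t)}$. Combined with Cauchy--Schwarz this bounds the drift of $u$ by $4L\,u(t)$, so
\begin{equation*}
u(t) \le \varepsilon d\,T + 4L\int_0^t u(s)\,\d s + M_t.
\end{equation*}

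Taking expectations and applying the classical Gronwall lemma \eqref{gronwall} yields $\E^{N,\varepsilon}_h[u(t)]\le C\varepsilon$ for a constant depending only on $L,T,d$. The quadratic variation satisfies $[M]_T \le \frac{4\varepsilon T}{N}\sup_{s\le T}u(s)$, so by the Burkholder--Davis--Gundy inequality and Young's inequality
\begin{equation*}
\E^{N,\varepsilon}_h\Bigl[\sup_{t\le T}|M_t|\Bigr] \le \tfrac{1}{2}\E^{N,\varepsilon}_h\Bigl[\sup_{t\le T}u(t)\Bigr] + \frac{C\varepsilon}{N}.
\end{equation*}
Inserting this into the supremum version of the differential inequality and absorbing gives $\E^{N,\varepsilon}_h[\sup_{t\le T}u(t)]\le C\varepsilon$ with $C$ independent of $N$ and $\varepsilon$. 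Markov's inequality then produces the first claim. The second claim is immediate since the identity matching is an admissible coupling, so $W_2^2(\mu_t^{N,\varepsilon,h},\sigma_t^N)\le u(t)$.

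The only genuinely delicate point is the joint behaviour in $\varepsilon \downarrow 0$ and $N\uparrow+\infty$ of the martingale supremum; BDG as above gives a bound independent of $N$, which is enough. Everything else is a linear stability computation along the recovery sequence, made possible because Theorem \ref{t:recovery_det}(b) forces the control contributions to cancel pathwise in the error equation.
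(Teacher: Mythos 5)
Your argument is correct, and up to the final step it coincides with the paper's: same error process $e_i(t)=x_i^{N,\varepsilon,h}(t)-y_i^N(t)$ with the exact cancellation of the control term (which is evaluated along $y_i^N$ in both \eqref{eq:tilted} and \eqref{eq:recovery}), same It\^o computation, same use of Hypothesis \ref{h.lip} together with the identity coupling bound $W_1(\mu_t^{N,\varepsilon,h},\sigma_t^N)\le \frac1N\sum_j|e_j(t)|$, and the same Gronwall closure. Where you diverge is in controlling the martingale term: the paper estimates its quadratic variation by $\frac{C\varepsilon}{N}\bigl(1+\sup_s M_s\bigr)$ and invokes the Bernstein-type Lemma \ref{l.bernstein} (already set up for the exponential tightness estimates), obtaining a bound of the form $\exp\bigl(-c\,\tfrac{N}{\varepsilon}\,\delta\bigr)$ on the deviation probability; you instead take expectations, use BDG plus Young to absorb $\E\bigl[\sup_t|M_t|\bigr]$ into $\E\bigl[\sup_t u(t)\bigr]$, and conclude $\E\bigl[\sup_t u(t)\bigr]\le C\varepsilon$ followed by Markov. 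Your route is more elementary and gives a quantitative first-moment rate, and since the proposition (and its use in Theorem \ref{t.LDP_lb_1}) only requires convergence in probability, it fully suffices; what it does not give is the exponential decay at speed $\varepsilon/N$ that the paper's Bernstein argument yields essentially for free. One cosmetic remark: $e_i(0)=0$ follows directly from the shared initial datum $\yy^N(0)=\xx^N_0$ in \eqref{eq:recovery}, not merely from Theorem \ref{t:recovery_det}(a), which only matches the empirical measures; and the absorption step tacitly uses $\E\bigl[\sup_t u(t)\bigr]<\infty$, which holds by the standard theory for Lipschitz SDEs (or a routine localization).
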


\begin{proof}
Take the difference between equations \eqref{eq:tilted} and \eqref{eq:recovery}:
\begin{equation}
x_i^{N,\varepsilon,h}(t) - y^N_i(t) = \int_0^t \left( \fF^N(x_i^{N,\varepsilon,h}(s),\xx^{N,\varepsilon,h}(s)) - \fF^N(y_i^{N}(s),\yy^{N}(s)) \right) \d s + \sqrt{\varepsilon} \d W_i(t).
\end{equation}
From the It\^o formula for $\phi(x) = |x|^2$ we get
\begin{equation}
\begin{split}
\big|x_i^{N,\varepsilon,h}(t) - y^N_i(t)\big|^2 &= 2\int_0^t \left( \fF^N(x_i^{N,\varepsilon,h}(s),\xx^{N,\varepsilon,h}(s)) - \fF^N(y_i^{N}(s),\yy^{N}(s)) \right)\cdot\left( x_i^{N,\varepsilon,h}(s) - y^N_i(s) \right) \d s \\
&+ 2\sqrt{\varepsilon} \int_0^t \left( x_i^{N,\varepsilon,h}(s) - y^N_i(s) \right) \d W_i(s) + 2\varepsilon t \\
&\lesssim \int_0^t \left(\left| x_i^{N,\varepsilon,h}(s) - y_i^{N}(s)\right| + \left|W_1(\mu^{N,\varepsilon,h}_s, \sigma^{N}_s) \right| \right)\left| x_i^{N,\varepsilon,h}(s) - y^N_i(s) \right| \d s \\
&+ \sqrt{\varepsilon} \int_0^t \left( x_i^{N,\varepsilon,h}(s) - y^N_i(s) \right) \d W_i(s) + \varepsilon t.\\
 \end{split}
\end{equation}
Employing inequality $W_1(\mu^{N,\varepsilon,h}_s, \sigma^{N}_s) \leq \frac{1}{N}\sum_{i=1}^N |x_i^{N,\varepsilon,h}(s) - y^N_i(s)|$ and averaging in $N$ we have
\begin{equation}
\begin{split}
\frac{1}{N} \sum_{i=1}^N \big| x_i^{N,\varepsilon,h}(t) - y^N_i(t)\big|^2 &\lesssim \frac{1}{N} \sum_{i=1}^N \int_0^t \left| x_i^{N,\varepsilon,h}(s) - y_i^{N}(s)\right|^2 \d s +  \int_0^t \left( \frac{1}{N} \sum_{j=1}^N\left| x_j^{N,\varepsilon,h}(s) - y^N_j(s) \right| \right)^2 \d s \\
&+ \frac{\sqrt{\varepsilon}}{N} \sum_{i=1}^N  \int_0^t \left( x_i^{N,\varepsilon,h}(s) - y^N_i(s) \right) \d W_i(s) + \varepsilon t\\
&\lesssim \frac{1}{N} \sum_{i=1}^N \int_0^t \left| x_i^{N,\varepsilon,h}(s) - y_i^{N}(s)\right|^2 \d s + M^{N,\varepsilon}_h(t) + \varepsilon t,
\end{split}
\end{equation}
where we shorthand $M^{N,\varepsilon}_h(t) := \frac{\sqrt{\varepsilon}}{N} \sum_{i=1}^N  \int_0^t ( x_i^{N,\varepsilon,h}(s) - y^N_i(s) ) \d W_i(s)$ for the martingale part. 
Using the Gronwall inequality \eqref{gronwall} we firstly get
\begin{equation}
\begin{split}
\frac{1}{N} \sum_{i=1}^N \big| x_i^{N,\varepsilon,h}(t) - y^N_i(t)\big|^2 &\lesssim \varepsilon t + M^{N,\varepsilon}_h(t) + \int_0^t \left( \varepsilon s + M^{N,\varepsilon}_h(s) \right) e^{C(t-s)}\d s\\ 
&\lesssim \varepsilon + \sup_{s\le t} M^{N,\varepsilon}_h(s), 
\end{split}
\end{equation}
from which
\begin{equation}
\sup_{s \leq t} W^2_2(\mu^{N,\varepsilon,h}_t, \sigma^{N}_t) \leq C \left( \varepsilon + \sup_{s\le t} M^{N,\varepsilon}_h(s) \right),
\end{equation}
for some constant $C \ge 0$.  
Employing now Lemma \ref{l.bernstein} to control the martingale term $M^{N,\varepsilon}_h(t)$, if we proceed as in Proposition \ref{p:energy_esitmate} (notice that the initial data cancel out) we easily get that
\begin{equation}
\P^{N,\varepsilon}_h \left( \sup_{s \leq t} \frac{1}{N}\sum_{i=1}^N \big|x_i^{N,\varepsilon,h}(t) - y^N_i(t)\big|^2 > \delta \right) \longrightarrow 0, \qquad \quad  \text{ if } N\uparrow +\infty, \varepsilon \downarrow 0,
\end{equation}
which is the required estimate, due to the arbitrariness of $\delta > 0$.
\end{proof}

The corresponding result for the associated currents is contained in the following

\begin{proposition}\label{p:diff_current_esitmate}
Let $\P^{N,\varepsilon}_h \in \PP(C([0,T]; (\R^d)^N))$ be the law of the solution to equation \eqref{eq:tilted} with initial distribution $\mu_0^N$. 
If Hypotheses \ref{h2} and \ref{h.lip} hold, then for every $\delta > 0$
%\begin{equation}
%\lim_{\substack{\varepsilon \to 0\\ N \to +\infty}} \P^{N, \varepsilon}_h \left( \| J^{N,\varepsilon,h} - \nu^N \|^2_{H^{-s}} > \delta \right) =0.
%\end{equation}
\begin{equation}
\lim_{\substack{\varepsilon \to 0\\ N \to +\infty}} \P^{N, \varepsilon}_h \left( \left| J^{N,\varepsilon,h}(\eta) - \nu^N(\eta) \right|^2 > \delta \right) =0, \qquad \forall \; \eta \in C_c^{\infty}(Q; \R^d).
\end{equation}
\end{proposition}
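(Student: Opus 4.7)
\smallskip

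\noindent\textbf{Proof proposal.} The plan is to expand $J^{N,\varepsilon,h}(\eta)$ via the Stratonovich--It\^o conversion along \eqref{eq:tilted}, compare termwise with $\nu^N(\eta)$ (which is the ``current'' associated with the ODE \eqref{eq:recovery}, namely $\nu^N(\eta)=\frac1N\sum_i\int_0^T\eta(t,y_i^N)\cdot\dot y_i^N\,\d t$), and show that each resulting piece tends to $0$ in $\P^{N,\varepsilon}_h$-probability. Using $[X^i_k,X^i_l]_t=\varepsilon\delta_{kl}t$ together with \eqref{eq:tilted} and \eqref{eq:recovery}, the difference splits as
\begin{equation*}
J^{N,\varepsilon,h}(\eta)-\nu^N(\eta)=A_1+A_2+A_3+A_4,
\end{equation*}
where
\begin{align*}
A_1&:=\tfrac1N\sum_i\int_0^T\bigl[\eta(t,x_i^{N,\varepsilon,h})\cdot \fF(x_i^{N,\varepsilon,h},\xx^{N,\varepsilon,h})-\eta(t,y_i^N)\cdot \fF(y_i^N,\yy^N)\bigr]\,\d t,\\
A_2&:=\tfrac1N\sum_i\int_0^T\bigl[\eta(t,x_i^{N,\varepsilon,h})-\eta(t,y_i^N)\bigr]\cdot \hh^N(t,y_i^N)\,\d t,\\
A_3&:=\tfrac{\sqrt\varepsilon}{N}\sum_i\int_0^T\eta(t,x_i^{N,\varepsilon,h})\,\d W_i(t),\qquad A_4:=\tfrac{\varepsilon}{2N}\sum_i\int_0^T\dive\eta(t,x_i^{N,\varepsilon,h})\,\d t.
\end{align*}

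First I would dispose of the two trivially small terms. Since $\dive\eta$ is bounded, $|A_4|\le \tfrac{\varepsilon T}{2}\|\dive\eta\|_\infty\to 0$ deterministically. For $A_3$, independence of the $W_i$'s gives $\E^{N,\varepsilon}_h|A_3|^2\le \tfrac{\varepsilon T}{N}\|\eta\|_\infty^2\to 0$, so Chebyshev settles $A_3\to 0$ in probability.

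Next I would bound $A_1$: adding and subtracting $\eta(t,x_i^{N,\varepsilon,h})\cdot \fF(y_i^N,\yy^N)$, Hypothesis \ref{h.lip}, the smoothness of $\eta$, and the compactness of $\mathrm{supp}\,\eta$ (together with the uniform bound on $y_i^N$ coming from Hypothesis \ref{h.comp} and \eqref{eq:recovery}, which makes $\fF(y_i^N,\yy^N)$ uniformly bounded on this support) yield
\begin{equation*}
|A_1|\lesssim \tfrac1N\sum_i\int_0^T\bigl(|x_i^{N,\varepsilon,h}-y_i^N|+W_1(\mu^{N,\varepsilon,h}_t,\sigma^N_t)\bigr)\,\d t\lesssim T\sup_{t\in[0,T]}\Bigl(\tfrac1N\sum_i|x_i^{N,\varepsilon,h}(t)-y_i^N(t)|^2\Bigr)^{1/2},
\end{equation*}
which tends to $0$ in probability by Proposition \ref{p:wasserstein_esitmate}.

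The main obstacle is the control term $A_2$, since $\hh^N$ is not bounded pointwise but only controlled in $L^2(\sigma^N)$ via Theorem \ref{t:recovery_det}(e). I would apply the mean value inequality $|\eta(t,x_i^{N,\varepsilon,h})-\eta(t,y_i^N)|\le \|\nabla\eta\|_\infty|x_i^{N,\varepsilon,h}-y_i^N|$ and then Cauchy--Schwarz on the double sum/integral to get
\begin{equation*}
|A_2|\le \|\nabla\eta\|_\infty\Bigl(\tfrac1N\sum_i\int_0^T|x_i^{N,\varepsilon,h}-y_i^N|^2\,\d t\Bigr)^{1/2}\Bigl(\int_0^T\langle\sigma^N_t,|\hh^N(t,\cdot)|^2\rangle\,\d t\Bigr)^{1/2}.
\end{equation*}
The second factor is uniformly bounded by Theorem \ref{t:recovery_det}(e) (since $\limsup_N I(\sigma^N,\hh^N)\le I(\mu,\hh)<+\infty$), while the first factor is dominated by $T^{1/2}\sup_t\bigl(\tfrac1N\sum_i|x_i^{N,\varepsilon,h}-y_i^N|^2\bigr)^{1/2}$, which vanishes in probability by Proposition \ref{p:wasserstein_esitmate}. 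Combining the four estimates and the arbitrariness of $\delta>0$ concludes the proof.
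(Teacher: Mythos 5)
Your proposal is correct and follows essentially the same route as the paper: the same decomposition of $J^{N,\varepsilon,h}(\eta)-\nu^N(\eta)$ into the $\fF$-difference, the $\hh^N$-difference, and the noise/It\^o terms, with the first two reduced to $\sup_t\frac1N\sum_i|x_i^{N,\varepsilon,h}-y_i^N|^2$ and Proposition \ref{p:wasserstein_esitmate}, and the $L^2(\sigma^N)$ control of $\hh^N$ from Theorem \ref{t:recovery_det}(e) handling $A_2$ exactly as in the text. Two minor remarks: your use of Chebyshev for $A_3$ is a legitimate simplification of the paper's Bernstein-inequality argument (exponential decay is not needed here, only convergence in probability); and in $A_1$ the claim that $\fF(y_i^N,\yy^N)$ is \emph{uniformly bounded} is not quite what Theorem \ref{t:recovery_det}(d) provides --- it only gives uniform second moments of $\sigma^N$ --- but your displayed bound still follows by Cauchy--Schwarz against $\bigl(\frac1N\sum_i(1+|y_i^N|+\frac1N\sum_j|y_j^N|)^2\bigr)^{1/2}$, which is how the paper argues.
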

\begin{proof}
Let us start by writing the two currents:
\begin{equation}
\begin{split}
\jJ^{N,\varepsilon,h}(\eta) 
% &= \frac{1}{N} \sum_{i=1}^N \int_0^T \eta(t, x_i^{N,\varepsilon,h}(t)) \circ \d x_i^{N,\varepsilon,h}(t) \\
&= \frac{1}{N} \sum_{i=1}^N \int_0^T \eta(t, x_i^{N,\varepsilon,h}(t)) \cdot  \left( \fF(x_i^{N,\varepsilon,h}(t), \xx^{N,\varepsilon,h}(t))  + \hh^N(t, y_i^{N}(t)) \right) \d t \\
&+ \frac{\sqrt{\varepsilon}}{N} \sum_{i=1}^N  \int_0^T \eta(t, x_i^{N,\varepsilon,h}(t)) \d W_i(t) + \frac{\varepsilon}{N} \sum_{i=1}^N\int_0^T \dive (\eta)(t, x_i^{N,\varepsilon,h}(t)) \d t \\
&\\
\nnu^N(\eta) &= \frac{1}{N} \sum_{i=1}^N \int_0^T \eta(t, y_i^{N}(t)) \cdot \left( \fF(y_i^{N}(t), \yy^{N}(t))  + \hh^N(t, y_i^{N}(t))\right) \d t, \\
\end{split}
\end{equation}
where we can assume $\eta \in C^\infty_c(Q;\R^d)$ (henceforth we also employ the Lipschitz character of the map $x \mapsto \eta(t,x)$. 
Hence
\begin{equation}
\begin{split}
&\left| \jJ^{N,\varepsilon,h}(\eta) -  \nnu^N(\eta) \right|^2 \\
&\quad \lesssim  \left| \frac{1}{N} \sum_{i=1}^N \int_0^T \left( \eta(t, x_i^{N,\varepsilon,h}(t)) \cdot \fF(x_i^{N,\varepsilon,h}(t), \xx^{N,\varepsilon,h}(t))  - \eta(t, y_i^{N}(t)) \cdot \fF(y_i^{N}(t), \yy^{N}(t))  \right) \d t \right|^2\\
&\qquad +  \left|  \frac{1}{N} \sum_{i=1}^N \int_0^T \left( \eta(t, x_i^{N,\varepsilon,h}(t))  - \eta(t, y_i^{N}(t)) \right) \cdot \hh^N(t, y_i^{N}(t)) \d t \right|^2 + \left| M^{N,\varepsilon}_h(T)  + \varepsilon\right|^2 \\
&\quad =  I + II + III,
\end{split}
\end{equation}
where we shorthand $M^{N,\varepsilon}_\eta(t) := \frac{\sqrt{\varepsilon}}{N} \sum_{i=1}^N  \int_0^t \eta(t, x_i^{N,\varepsilon,h}(t)) \d W_i(s)$.
For sake of clearness we study the three terms separately.
\begin{equation}
\begin{split}
I &\lesssim  \left| \frac{1}{N} \sum_{i=1}^N\int_0^T \left( \eta(t, x_i^{N,\varepsilon,h}(t))  - \eta(t, y_i^{N}(t)) \right) \cdot \fF(y_i^{N}(t), \yy^{N}(t)) \d t\right|^2 \\
&\quad + \left| \frac{1}{N} \sum_{i=1}^N\int_0^T \eta(t, x_i^{N,\varepsilon,h}(t)) \cdot \left(  \fF(x_i^{N,\varepsilon,h}(t), \xx^{N,\varepsilon,h}(t))  - \fF(y_i^{N}(t), \yy^{N}(t))  \right) \d t \right|^2\\
&\lesssim \left( \sup_{t \in [0,T]} \frac{1}{N} \sum_{i=1}^N\left| x_i^{N,\varepsilon,h}(t) - y_i^{N}(t) \right|\left( 1 + |y_i^N(t)| + \frac{1}{N}\sum_{i=1}^N |y_i^N(t)| \right) \right)^2 \\
&\quad + \left( \sup_{t \in [0,T]} \frac{1}{N} \sum_{i=1}^N \left|  x_i^{N,\varepsilon,h}(t) - y_i^{N}(t) \right| + \sup_{t \in [0,T]} W_1(\mu_t^{N,\varepsilon,h},\sigma_t^N ) \right)^2\\
&\lesssim \left( \sup_{t \in [0,T]} \frac{1}{N} \sum_{i=1}^N\left| x_i^{N,\varepsilon,h}(t) - y_i^{N}(t) \right|^2 \right) \left( 1 + \sup_{t \in [0,T]}\frac{1}{N} \sum_{i=1}^N|y_i^N(t)|^2 \right) \\
&\lesssim \left( \sup_{t \in [0,T]} \frac{1}{N} \sum_{i=1}^N\left| x_i^{N,\varepsilon,h}(t) - y_i^{N}(t) \right|^2 \right), \\
\end{split}
\end{equation}
where we used the inequality $W_1(\mu^{N,\varepsilon,h}_t, \sigma^{N}_t) \leq \frac{1}{N}\sum_{i=1}^N |x_i^{N,\varepsilon,h}(t) - y^N_i(t)|$ and subsequently the uniform control \eqref{eq:control_measures_n} given in Theorem \ref{t:recovery_det}. 
The second part can be estimated in a similar way
\begin{equation}
\begin{split}
II &\lesssim  \left( \frac{1}{N} \sum_{i=1}^N \left( \int_0^T \left| \eta(t, x_i^{N,\varepsilon,h}(t))  - \eta(t, y_i^{N}(t)\right|^2 \d t\right)^{1/2} \left( \int_0^T \left| \hh(t,y_i^{N}(t))\right|^2\d t \right)^{1/2} \right)^2 \\
&\lesssim  \left( \frac{1}{N} \sum_{i=1}^N \int_0^T \left| \eta(t, x_i^{N,\varepsilon,h}(t))  - \eta(t, y_i^{N}(t)\right|^2 \d t\right) \left( \frac{1}{N} \sum_{i=1}^N \int_0^T \left| \hh(t,y_i^{N}(t))\right|^2\d t \right) \\
&\lesssim \left( \sup_{t \in [0,T]} \frac{1}{N} \sum_{i=1}^N\left| x_i^{N,\varepsilon,h}(t) - y_i^{N}(t) \right|^2 \right), \\
\end{split}
\end{equation}
from the uniform bound on $\frac{1}{N} \sum_{i=1}^N \int_0^T \left| \hh(t,y_i^{N}(t))\right|^2\d t$ given by Theorem \ref{t:recovery_det}(e).
Hence, for some constant $C_1 > 0$, 
\begin{equation}\label{conv_I_II}
\P^{N,\varepsilon}_h \left( I + II > \delta \right) \leq \P^{N,\varepsilon}_h \left( \sup_{t \in [0,T]} \frac{C_1}{N} \sum_{i=1}^N\left| x_i^{N,\varepsilon,h}(t) - y_i^{N}(t) \right|^2   > \delta \right) \longrightarrow 0, \quad \text{ as } N\uparrow +\infty, \; \varepsilon \downarrow 0,
\end{equation}
thanks to Proposition \ref{p:wasserstein_esitmate}.
Concerning the last term, there is $C_2 > 0$ such that $[M^{N,\varepsilon}_h]_T \lesssim C_2 \frac{\varepsilon}{N}$, and employing Lemma \ref{l.bernstein} we deduce that
\begin{equation}
\P^{N, \varepsilon}_h \left( \pm M^{N,\varepsilon}_h(T) > \delta \right) \leq e^{- C_2\frac{N}{\varepsilon}\delta} .
\end{equation}
From inequality \eqref{prob_union} we easily get
\begin{equation}\label{conv_III}
\P^{N, \varepsilon}_h \left( |M^{N,\varepsilon}_h(T)|^2 > \delta \right) \leq e^{2- C_2\frac{N}{\varepsilon}\sqrt{\delta}} \longrightarrow 0, \quad \text{ as } N\uparrow +\infty, \; \varepsilon \downarrow 0.
\end{equation}
Collecting the estimates \eqref{conv_I_II} and \eqref{conv_III} we easily get the required result.
\end{proof}

\begin{theorem}\label{t.LDP_lb_1}
Let Hypotheses \ref{h.lip} and \ref{h.comp} be in force. Then for every open set $\sfO \subset \cX$ it holds
\begin{equation}\label{eq:liminf}
\liminf_{\substack{\varepsilon \to 0\\ N \to +\infty}} \frac{\varepsilon}{N} \log \P^{N,\varepsilon} ((\mu^{N}, J^{N,\varepsilon}) \in \sfO) \geq - \inf_{(\mu, J) \in \sfO} I(\mu, J).
\end{equation}
\end{theorem}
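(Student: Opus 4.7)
The plan is to apply the entropy--LDP duality of Theorem \ref{t.lb_entropia}: since $\cX$ is completely regular, the lower bound \eqref{eq:liminf} follows once, for every $(\mu,J)\in\cX$, we produce $\cQ^{(\mu,J)}_{\varepsilon,N}\in\PP(\cX)$ with $\cQ^{(\mu,J)}_{\varepsilon,N}\to\delta_{(\mu,J)}$ weakly and
\[
\limsup_{\varepsilon\to 0,\,N\to\infty}\tfrac{\varepsilon}{N}\,\ent\!\left(\cQ^{(\mu,J)}_{\varepsilon,N}\,\big|\,\P^{N,\varepsilon}\circ(\mu^N,J^{N,\varepsilon})^{-1}\right)\le I(\mu,J).
\]
When $I(\mu,J)=+\infty$ the choice $\cQ^{(\mu,J)}_{\varepsilon,N}=\P^{N,\varepsilon}\circ(\mu^N,J^{N,\varepsilon})^{-1}$ is admissible, so we focus on the finite case. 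Lemma \ref{l:rate_h} then yields $\hh\in L^2(Q,\tilde\mu;\R^d)$ with $J=\hh\tilde\mu$ and $I(\mu,J)=\tfrac12\int_0^T\langle\mu_t,|\hh|^2\rangle\,dt$, and Theorem \ref{t:recovery_det} supplies a microscopic recovery $(\yy^N,\hh^N)$ whose data $\sigma^N:=\sigma[\yy^N]$, $\nnu^N:=\hh^N\sigma^N$ satisfy $\sigma^N\to\mu$ in $C([0,T];\PP_1(\R^d))$, $\nnu^N\weakto^*\hh\mu$ in $\MM(Q;\R^d)$, and $\limsup_N I(\sigma^N,\hh^N)\le I(\mu,\hh)$. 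We tilt the original SDE by the open-loop drift $\hh^N(t,y_i^N(t))$, producing equation \eqref{eq:tilted} with law $\P^{N,\varepsilon}_h$, and set $\cQ^{(\mu,J)}_{\varepsilon,N}:=\P^{N,\varepsilon}_h\circ(\mu^N,J^{N,\varepsilon})^{-1}$.

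\textbf{Entropy estimate via Girsanov.} Since the trajectories $\yy^N$ are deterministic, the control $\hh^N(\cdot,y_i^N(\cdot))$ is adapted and deterministic, so Girsanov's theorem applies and gives
\[
\frac{d\P^{N,\varepsilon}_h}{d\P^{N,\varepsilon}}=\exp\!\left(\frac{1}{\sqrt\varepsilon}\sum_{i=1}^N\int_0^T\hh^N(t,y_i^N(t))\cdot dW_i(t)-\frac{1}{2\varepsilon}\sum_{i=1}^N\int_0^T|\hh^N(t,y_i^N(t))|^2\,dt\right).
\]
Rewriting $W_i$ through the $\P^{N,\varepsilon}_h$-Brownian motion (so that the first sum contributes its quadratic variation in expectation), a direct computation produces
\[
\ent(\P^{N,\varepsilon}_h\,|\,\P^{N,\varepsilon})=\frac{1}{2\varepsilon}\sum_{i=1}^N\int_0^T|\hh^N(t,y_i^N(t))|^2\,dt=\frac{N}{\varepsilon}\,I(\sigma^N,\hh^N).
\]
Applying the entropy contraction \eqref{eq:ent_composiz.} to the map $\xx\mapsto(\mu^N,J^{N,\varepsilon})$ from path space to $\cX$, and invoking Theorem \ref{t:recovery_det}(e), then yields
\[
\limsup_{\varepsilon\to 0,\,N\to\infty}\tfrac{\varepsilon}{N}\,\ent\!\left(\cQ^{(\mu,J)}_{\varepsilon,N}\,\big|\,\P^{N,\varepsilon}\circ(\mu^N,J^{N,\varepsilon})^{-1}\right)\le\limsup_N I(\sigma^N,\hh^N)\le I(\mu,J).
\]

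\textbf{Weak convergence and main obstacle.} It remains to show $(\mu^{N,\varepsilon,h},J^{N,\varepsilon,h})\to(\mu,J)$ in $\cX$ in $\P^{N,\varepsilon}_h$-probability. For the measure coordinate, Proposition \ref{p:wasserstein_esitmate} combined with $W_1\le W_2$ gives $\sup_t W_1(\mu^{N,\varepsilon,h}_t,\sigma^N_t)\to 0$ in probability, and the triangle inequality together with $\sup_t W_1(\sigma^N_t,\mu_t)\to 0$ from Theorem \ref{t:recovery_det}(d) closes the argument. For the current coordinate, Proposition \ref{p:diff_current_esitmate} yields $J^{N,\varepsilon,h}(\eta)\to\nnu^N(\eta)$ in probability and Theorem \ref{t:recovery_det}(d) gives $\nnu^N(\eta)\to J(\eta)$ for every $\eta\in C^\infty_c(Q;\R^d)$. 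The main obstacle is to lift this pointwise convergence on the dense subset $C^\infty_c(Q;\R^d)\subset\hH^s$ to convergence in the weak-$*$ topology of $\hH^{-s}$: this rests on tightness of the tilted current laws $\P^{N,\varepsilon}_h\circ(J^{N,\varepsilon,h})^{-1}$ in that topology, which should be obtained by redoing the Fourier-mode computation of Proposition \ref{p:exp_tight_currents} for equation \eqref{eq:tilted}. The drift remains Lipschitz by Hypothesis \ref{h.lip}, while the additional contribution $\hh^N(\cdot,y^N(\cdot))$ enters only as a uniformly bounded additive term into the coefficients $Z_n^m(k)$, so the Bernstein-type estimate for the martingale part and the deterministic bound for the drift part carry through without any structural change.
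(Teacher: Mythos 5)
Your proposal follows the paper's proof essentially step for step: the entropy/$\Gamma$-convergence reduction via Theorem \ref{t.lb_entropia}, the representation of the rate function through Lemma \ref{l:rate_h}, the deterministic recovery sequence of Theorem \ref{t:recovery_det}, the open-loop Girsanov tilt with the entropy identity $\frac{\varepsilon}{N}\ent = \frac12\int_0^T\langle\sigma^N_t,|\hh^N|^2\rangle\,\d t$, and the convergence of the tilted laws via Propositions \ref{p:wasserstein_esitmate} and \ref{p:diff_current_esitmate}. The only point where you diverge is the final "obstacle" you flag: the paper sidesteps the weak-$*$ upgrade by defining convergence in $\cX$ directly through testing currents against $C_c^\infty(Q;\R^d)$, so your extra tightness step for the tilted currents is a (harmless, arguably more careful) addition rather than a necessity.
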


\begin{proof}
Assume that $I(\mu,J) < +\infty$, otherwise the result is easily true. 
Lemma \ref{l:rate_h} guarantees that $\d \jJ = \hh \,\d \mu$, for some $\hh \in L^2(Q, \tilde \mu;\R^d)$ and
\begin{equation}
I(\mu,\hh) = \frac{1}{2} \int_0^T \langle \mu_t, |\hh(t,\cdot)|^2 \rangle \d t
\end{equation}
under the constraint $\partial_t \mu_t + \dive \left( (\fF(x,\mu_t) + \hh(x,t)) \mu_t \right) = 0$.

Introduce now the martingale 
\begin{equation}
R^{N,\varepsilon,h}_t := \frac{1}{\sqrt{\varepsilon}} \sum_{i=1}^N \int_0^t \hh^N(t,y_i^N(s)) \d W_i(s),
\end{equation}
where $y_i^N$, $i = 1,\ldots, N$, are solutions to the deterministic equation \eqref{eq:recovery} given in Theorem \ref{t:recovery_det}.
The associated quadratic variation has the form 
\begin{equation}
[R^{N,\varepsilon,h}]_t = \frac{1}{\varepsilon}\sum_{i=1}^N \int_0^t |\hh^N(t,y^N_i(s))|^2 \d s,
\end{equation}
and it is uniformly bounded as $N \uparrow +\infty$ thanks to Theorem \ref{t:recovery_det}-(e).
%\begin{equation}
%E_t := \exp \left( M^{N,\varepsilon,h}_t - \frac{1}{2} [M^{N,\varepsilon,h}]_t \right)
%\end{equation}
In order to apply Theorem \ref{t.lb_entropia} we introduce the probability measures 
\[ \bar\P^{N,\varepsilon}_h(\d \omega) = \exp \left( R^{N,\varepsilon,h}_t - \frac{1}{2} [R^{N,\varepsilon,h}]_t \right) (\omega) \P(\d \omega)\]
from which we define $\P^{N,\varepsilon}_h:= \bar \P^{N,\varepsilon}_h \circ (\xx^{N,\varepsilon,h})^{-1}$, where $x_i^{N,\varepsilon,h}$ solves \eqref{eq:tilted}, $i = 1,\ldots, N$. 
Taking advantage from the general inequality \eqref{eq:ent_composiz.} we get   
\begin{equation}
\ent\left( \P^{N,\varepsilon}_h \circ (\mu^N, J^{N,\varepsilon})^{-1} \big| \P^{N,\varepsilon} \circ (\mu^N, J^{N,\varepsilon})^{-1} \right) \leq \ent( \P^{N,\varepsilon}_h | \P^{N,\varepsilon}) \leq \ent(\bar \P^{N,\varepsilon}_h | \P)  
\end{equation}
Then we can compute the rescaled entropy
\begin{equation}
\begin{split}
\frac{\varepsilon}{N} \ent(\bar \P^{N,\varepsilon}_h | \P) &= \frac{\varepsilon}{N} \bar \E^{N,\varepsilon}_h \left( R^{N,\varepsilon,h}_T - \frac{1}{2} [R^{N,\varepsilon,h}]_T \right) \\
&=  \frac{\varepsilon}{N} \bar \E^{N,\varepsilon}_h \left(\frac{1}{2\varepsilon} \sum_{i=1}^N \int_0^T |\hh^N(t,y_i^N(t))|^2 \d t \right) \\
&= \frac{1}{2} \int_0^T \langle \sigma^N_t, |\hh^N(t, \cdot)|^2 \rangle \d t < +\infty,\
\end{split}
\end{equation}
where in the second equality we used Girsanov theorem, stating that $R^{N,\varepsilon,h}_t - [R^{N,\varepsilon,h}]_t$ is a martingale with respect to $\bar \P^{N,\varepsilon}_h$ and it has null expectation.
The last equality comes from the fact the $\sigma_t^N= \sigma[\yy_t^N] = \frac{1}{N}\sum_{i=1}^N \delta_{y_i^{N}(t)}$ is deterministic and uniformly bounded.

Employing now Theorem \ref{t:recovery_det} we finally get       
\begin{equation}
\begin{split}
\limsup_{\substack{\varepsilon \to 0\\ N \to +\infty}} \frac{\varepsilon}{N} \ent(\P^{N,\varepsilon}_h | \P^{N,\varepsilon})  &=  
\limsup_{N \to +\infty} \frac{1}{2} \int_0^T \int_{\R^d} |\hh^N(t,x)|^2 \d \sigma^N_t(x) \d t \\
&\leq \frac{1}{2} \int_0^T \int_{\R^d} |\hh(t, x)|^2 \d \mu_t(x) \d t = I(\mu, h), 
\end{split}
\end{equation}
which is the required bound. 
To conclude the proof it is enough to show that $\P^{N,\varepsilon}_h \circ (\mu^{N,\varepsilon,h}, J^{N,\varepsilon,h})^{-1} \weakto^* \delta_{(\mu,h)}$ as $N \uparrow +\infty, \varepsilon \downarrow 0$. 
With the notation $\delta_{(\mu,h)}$ we mean the probability measure concentrated on the solution $(\mu,\hh)$ to $\partial_t \mu_t + \dive \left( (\fF(x,\mu_t) + \hh(x,t)) \mu_t \right) = 0$.
But this is a consequence of the estimates given in Proposition \ref{p:wasserstein_esitmate} and Proposition \ref{p:diff_current_esitmate}. 
Indeed, take a function $\Psi \in C_b(\cX)$ and define the sets $B^{N,\varepsilon}_\delta \subset C([0,T]; (\R^d)^N)$:
\[B^{N,\varepsilon}_\delta := \left\lbrace  \sup_{t \in [0,T]} W^2_2(\mu^{N,\varepsilon,h}_t, \sigma^{N}_t) > \delta \right\rbrace \cup \left\lbrace \exists \, \eta \in C_c^\infty(Q; \R^d): \left| J^{N,\varepsilon,h}(\eta) - \nu^N(\eta) \right|^2 > \delta \right\rbrace. \]
For every  $l >0$, from the continuity of $\Psi$ there exists $ \bar \delta = \bar \delta (N,\varepsilon) > 0$ such that $\left|\Psi(\mu^{N,\varepsilon, h}, J^{N, \varepsilon, h}) - \Psi(\sigma^N, \nu^N)\right|  \leq l/2$ in $\left( B^{N,\varepsilon}_\delta \right)^c$.  
Then it holds
\begin{equation}
\begin{split}
\E^{N,\varepsilon}_h \left| \Psi(\mu^{N,\varepsilon, h}, J^{N, \varepsilon, h}) - \Psi(\sigma^N, \nu^N) \right| &\lesssim \int_{B_{\bar \delta}^{N,\varepsilon}} \left| \Psi(\mu^{N,\varepsilon, h}, J^{N, \varepsilon, h}) - \Psi(\sigma^N, \nu^N) \right| \d \P^{N,\varepsilon}_h + \frac{l}{2} \\
&\lesssim \P^{N,\varepsilon}_h ( B^{N,\varepsilon}_\delta ) + \frac{l}{2}, 
\end{split}
\end{equation}
where we took advantage of the continuity and boundedness of $\Psi$.
Thanks to Proposition \ref{p:wasserstein_esitmate} and Proposition \ref{p:diff_current_esitmate} there exist $\bar N$, $\bar \varepsilon$ such that $\P^{N,\varepsilon}_h ( B^{N,\varepsilon}_\delta) \leq \frac{l}{2}$, for every $N >\bar N$, $\varepsilon < \bar \varepsilon$, hence
\[\E^{N,\varepsilon}_h \left| \Psi(\mu^{N,\varepsilon, h}, J^{N, \varepsilon, h}) - \Psi(\sigma^N, \nu^N) \right| \lesssim l, \qquad  \forall \, N >\bar N, \, \forall \, \varepsilon < \bar \varepsilon. \]
Employing the continuity of $\Psi$ and the convergence of the sequence $(\sigma^N, \nu^N) \weakto (\mu,\hh \mu)$ in $\cX$ we easily get
\[\E^{N,\varepsilon}_h \Psi(\mu^{N,\varepsilon, h}, J^{N, \varepsilon, h}) \longrightarrow \Psi(\mu, J), \]
which is the required convergence.
\end{proof}

\subsection{Proofs of the main results}

Now we can conclude the proof of the main theorems collecting the results obtained above.

\begin{proof}[Proof of Theorem \ref{t:Limit}]
Thanks to Hypothesis \ref{h.lip} the existence and uniqueness for the system \eqref{eq:state} is fairly standard.
Let us denote by $\sigma^N \in C([0,T]; \PP(\Rd))$ the empirical measure associated to the deterministic system
\[ \frac{\d}{\d t} x_i^{N}(t)=\fF(x_i^{N}(t),\xx^{N}(t)) ,\quad i=1,\cdots, N,\qquad
  \xx^N(0)=\xx^N_0,
 \]
 and by $\nu^N \in \MM([0,T] \times \Rd, \Rd)$ the vector-valued measure 
 \[ \nnu^N(\eta) = \frac{1}{N} \sum_{i=1}^N \int_0^T \eta(t, x_i^{N}(t)) \cdot \fF(x_i^{N}(t), \xx^{N}(t)) \d t, \quad \forall \, \eta \in C_c^\infty (Q; \R^d) \]
Employing Propositions \ref{p:wasserstein_esitmate} and \ref{p:diff_current_esitmate} in the simpler case $\hh^N = 0$ we get
\begin{equation}\label{est1_h_zero}
\begin{split}
\lim_{\substack{\varepsilon \to 0\\ N \to +\infty}} \P^{N, \varepsilon} \left( \sup_{t \in [0,T]} W^2_2(\mu^{N,\varepsilon}_t, \sigma^{N}_t) > \delta \right) &=0, \\
\lim_{\substack{\varepsilon \to 0\\ N \to +\infty}} \P^{N, \varepsilon}_h \left( \left| J^{N,\varepsilon}(\eta) - \nu^N(\eta) \right|^2 > \delta \right) &=0,
\end{split}
\end{equation}
for every $\eta \in C_c^{\infty}(Q; \R^d)$. 
On the other hand, from a compactness argument analogous to the one in the proof of Theorem \ref{ex_uniq_vlasov} (part (i), steps 3-4) there exists $(\mu,\nu) \in \cX$ such that  
\begin{equation}\label{est2_h_zero}
\lim_{N \to +\infty} \sup_{t \in [0,T]} W_1(\sigma^N_t,\mu_t) =0, \qquad \lim_{N \to +\infty} \left| \nu^N(\eta) - \nu(\eta) \right| = 0, \quad \forall\, \eta \in C_c^\infty(Q;\Rd),
\end{equation}
where $\d \nu = F(\cdot, \mu) \d \mu$ and $\partial_t \mu_t + \dive (F(x,\mu_t) \mu_t) = 0$ and the solution is unique, thanks to Theorem \ref{ex_uniq_vlasov} in the Appendix.
The combination of \eqref{est1_h_zero} and \eqref{est2_h_zero} finally guarantees the result.
\end{proof}

\begin{proof}[Proof of Theorem \ref{t:LDP}]
Part (i) of the theorem follows by Lemmata \ref{goodness}  and \ref{l:rate_h}. 
Part (ii) is a consequence of Proposition \ref{p:upper_bound_compacts} and the exponential tightness proved in Theorem \ref{t:exp_tight}.
For what concerns Part (iii) we refer to the proof of Theorem \ref{t.LDP_lb_1}.
\end{proof}

\section{Appendix}

Let us start by presenting a wellposedness result for a class of (nonlocal) Vlasov-type PDEs.
For the sake of clarity, we make a distinction between assumptions for existence and uniqueness of a solution and we furthermore pay attention to the regularity of the initial measure.
 
\begin{theorem}\label{ex_uniq_vlasov}
Let $\vv: [0,T] \times \R^d \times \PP_1(\R^d) \to \R^d$ be a measurable vector field satisfying 
\begin{equation}\label{growth}
\left| \vv(t,x, \mu ) \right| \leq c(t)\left( 1 + |x| + \int_{\R^d}|x| \d \mu(x) \right),
\end{equation}
with $c \in L^1(0,T)$.
Moreover, suppose that $ (t,x) \mapsto \vv(t, x, \mu)$ is a Caratheodory function for every $\mu \in \PP_1(\R^d)$ and
\begin{equation}\label{cont_v}
\sup_{x \in C}\sup_{t \in [0,T]} \left| \vv(t,x,\mu^n) -\vv(t,x,\mu) \right| \longrightarrow 0, \qquad \forall\, C \Subset \Rd,
\end{equation}
whenever $W_1(\mu^n, \mu) \to 0$ as $n \uparrow +\infty$.
The following results hold true
\begin{itemize}
\item[(i)] If $\mu_0 \in \PP(\Rd)$ and $\int_{\Rd} \psi(x) \d \mu_0(x) < +\infty$ for a moderated function $\psi: \R^d \to \R_+$ (in the sense of \cite[Def.~2.2]{fornasier2018mean}), then there exists a distributional solution $\mu \in C([0,T]; \PP_1(\Rd))$ to the  equation
\begin{equation}\label{CE}
\partial_t \mu_t + \dive (\vv(t,x,\mu_t) \mu_t) = 0
\end{equation}
with initial datum $\mu(0,x) = \mu_0(x)$.
\item[(ii)] When the map  $(x,\mu) \mapsto \vv(\cdot, x, \mu)$ is Lipschitz:
\begin{equation}\label{lip_v}
\left| \vv(t,x,\mu) - \vv(t,y,\nu) \right| \leq L\left( |x-y|  + W_1(\mu, \nu)\right), \qquad \forall \, x,y \in \Rd, \;\mu,\nu \in \PP_1(\Rd),
\end{equation}
then equation \eqref{CE} admits a unique solution.
\item[(iii)] If $\supp(\mu_0) \subset \cC$, for some compact set $\cC \Subset \R^d$, then there exists $R>0$ such that $\supp(\mu_t) \subset B(0,R)$, for every $t \in [0,T]$.
\end{itemize} 
\end{theorem}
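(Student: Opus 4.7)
The plan is to treat the three parts separately, using a particle/empirical measure approximation for existence, a characteristic-based Gronwall argument for uniqueness, and a direct bound on the flow for propagation of support.

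For part (i), I would approximate the initial datum by empirical measures $\mu_0^k = \frac{1}{k}\sum_{i=1}^k \delta_{x_i^k(0)}$ with $W_1(\mu_0^k,\mu_0)\to 0$, selecting the atoms so that the moderated moment $\int \psi\,\d\mu_0^k$ remains uniformly bounded. For each $k$ I would produce a solution $\xx^k\in C([0,T];(\R^d)^k)$ of
\begin{equation*}
\dot x_i^k(t)=\vv(t,x_i^k(t),\mu^k_t),\qquad \mu^k_t=\tfrac 1k\sum_{j=1}^k \delta_{x_j^k(t)},\qquad i=1,\ldots,k,
\end{equation*}
by a Schauder fixed point on $C([0,T];(\R^d)^k)$: the Caratheodory hypothesis and the $L^1$-in-time growth bound \eqref{growth} give continuity and compactness of the associated integral operator. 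A Gronwall argument based on \eqref{growth} applied to $m^k(t):=\int |x|\,\d\mu^k_t$ yields $\sup_{k}\sup_{t}m^k(t)<+\infty$, and the bound
\begin{equation*}
W_1(\mu^k_s,\mu^k_t)\le \int_s^t \langle\mu^k_r,|\vv(r,\cdot,\mu^k_r)|\rangle\,\d r\le C\int_s^t c(r)\,\d r
\end{equation*}
provides equicontinuity in $C([0,T];\PP_1(\R^d))$. Ascoli--Arzel\`a extracts a limit $\mu$, and passage to the limit in the weak formulation of the continuity equation uses the compact-set uniform convergence \eqref{cont_v} together with the uniform moment control to dominate tails.

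For part (ii), under the Lipschitz hypothesis \eqref{lip_v} I would realize any solution $\mu$ via characteristics: the ODE $\dot X_t(y)=\vv(t,X_t(y),\mu_t)$, $X_0(y)=y$, is uniquely solvable because the field is globally Lipschitz in $x$, and $\mu_t=(X_t)_\sharp \mu_0$ follows by verifying the continuity equation against test functions. Given two solutions $\mu^1,\mu^2$ with the same initial datum and flows $X^1,X^2$, the transport plan $(X_t^1,X_t^2)_\sharp\mu_0$ gives
\begin{equation*}
W_1(\mu^1_t,\mu^2_t)\le \int_{\R^d}|X_t^1(y)-X_t^2(y)|\,\d\mu_0(y),
\end{equation*}
and the Lipschitz assumption produces
\begin{equation*}
\int |X_t^1-X_t^2|\,\d\mu_0\le L\int_0^t\Bigl(\int|X_s^1-X_s^2|\,\d\mu_0+W_1(\mu^1_s,\mu^2_s)\Bigr)\,\d s,
\end{equation*}
so the Gronwall lemma \eqref{gronwall} forces $\mu^1=\mu^2$.

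For part (iii), I would use the characteristic representation from (ii). Letting $R_0:=\sup\{|y|:y\in\mathcal C\}$ and $m(t):=\int|x|\,\d\mu_t(x)$, the growth bound gives
\begin{equation*}
|X_t(y)|\le R_0+\int_0^t c(s)\bigl(1+|X_s(y)|+m(s)\bigr)\,\d s\qquad \mu_0\text{-a.e.\ } y.
\end{equation*}
Integrating against $\mu_0$ yields a closed Gronwall inequality for $m$, whence $\sup_t m(t)<+\infty$, and substituting back gives a uniform bound $|X_t(y)|\le R$ independent of $y\in\supp(\mu_0)$, proving that $\supp(\mu_t)\subset B(0,R)$.

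The main obstacle is part (i): the lack of Lipschitz regularity rules out a direct contraction argument, so one must commit to a compactness/approximation scheme and carefully verify the passage to the limit in the nonlinear product $\vv(t,\cdot,\mu^k_t)\mu^k_t$. The delicate points there are controlling the tails of $\mu^k_t$ uniformly in $k$ (which relies crucially on the moderated-moment hypothesis on $\mu_0$ and on $c\in L^1(0,T)$) and combining the compact-set uniform convergence of $\vv(t,\cdot,\mu^k_t)$ with weak$^\ast$ convergence of $\mu^k_t$ on unbounded test functions.
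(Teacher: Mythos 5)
Your proposal is correct and, for part (i), follows essentially the same route as the paper: empirical approximation of $\mu_0$ with uniformly controlled moderated moments, Caratheodory/Peano solvability of the particle characteristics, uniform first-moment bounds and $W_1$-equicontinuity, Ascoli--Arzel\`a, and passage to the limit by splitting test integrals over a large compact set (where \eqref{cont_v} applies) and its complement (where the superlinear moment bound controls the tails). The differences are in (ii) and (iii), and they are minor. For (ii) the paper sets up a Banach fixed point: it freezes $\nu$, defines the flow $\cT^\nu_t$ of $\dot x=\vv(t,x,\nu_t)$, and shows $\nu\mapsto(\cT^\nu)_\sharp\mu_0$ is a contraction on a short time interval; you instead compare two putative solutions directly through their flows and apply Gronwall. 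The two arguments are equivalent in substance, but note that your step ``$\mu_t=(X_t)_\sharp\mu_0$ follows by verifying the continuity equation'' only shows the pushforward \emph{is} a solution of the frozen linear equation; to identify it with the given $\mu$ you must invoke uniqueness for the linear continuity equation with Lipschitz velocity (the paper cites \cite[Prop.~8.1.8]{ambrosio2008gradient} for exactly this), so make that citation explicit. For (iii) the paper runs the Gronwall bound on the \emph{approximating particle trajectories} $x_i^k(t)$ and passes the uniform support bound to the limit, so it covers the solution constructed in (i) without assuming the Lipschitz condition; your version works through the characteristics of the limit equation and therefore implicitly assumes the hypotheses of (ii). If (iii) is meant to hold under the hypotheses of (i) alone, transplant your (correct) Gronwall estimate to the particle level, exactly as you already do in (i).
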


\begin{proof}
In the proof assume for simplicity that the function $c \in L^1(0,T)$ appearing in \eqref{growth} is actually constant.
For the more general case the technique is equivalent.

%of moderated functions we refer to the techniques developped in  \cite{fornasier2018mean}. 
\smallskip
\noindent (i) Let us divide the proof in several steps.

STEP 1: (A priori estimate) 
Suppose here that there exists a solution to equation \eqref{CE}.
Thanks to \cite[Prop.~5.3]{fornasier2018mean} we can find an admissible function $\theta: [0,+\infty) \to [0,+\infty)$ in the sense of \cite[Def.~2.2]{fornasier2018mean} such that 
\begin{equation}\label{apriori_theta}
\sup_{t \in [0,T]} \int_{\Rd} \theta(|x|) \d \mu_t(x) \leq C \left( 1 + \int_{\Rd} \psi(x) \d\mu_0(x) \right),
\end{equation}
where the constant $C$ depends only on $c, T, \int_\Rd |x| \d \mu_0(x)$ and the doubling constant $K$ of $\theta$.

STEP 2: (Approximation of initial data) 
Starting from $\mu_0 \in \PP(\Rd)$ we define a sequence of compact sets $A_k$ with the property 
\[A_k \subset A_{k+1}, \quad \mu_0(A_k) < \dfrac{1}{k}, \quad  \lim_{k \to +\infty} \mu_0(A_k) = \mu_0\left( \bigcup_{i=1}^\infty A_i \right) = 1. \]
We introduce the normalized measures $\mu_0^k \in \PP(\R^d)$
\[ \mu_0^k:= \frac{1}{\mu_0(A_k)}\mu_0 \mres{A_k}, \qquad \mu_0^k \weakto \mu_0, \quad \text{ as } k \to +\infty, \]
and using Beppo Levi monotone convergence we get
\begin{equation}\label{conv_1}
 \lim_{k \to +\infty} \left| \int_{\Rd}\psi(x)\d \mu_0^k(x)  - \int_{\Rd}\psi(x)\d \mu_0(x) \right| = 0.
\end{equation}
Thanks to the compactness of $A_k$ there exists a sequence of empirical measures $\mu_0^{k,m}$ such that
\[ \mu_0^{k,m}:= \frac{1}{m}\sum_{j=1}^m \delta_{x_{j,k}}, \qquad \mu_0^{k,m} \weakto \mu_0^k, \quad  \text{ as } m \to +\infty. \] 
Moreover, $\psi$ is bounded and continuous on $A_k$ so that
\[ \lim_{m \to +\infty} \left| \int_{\Rd}\psi(x)\d \mu_0^{k,m}(x)  - \int_{\Rd}\psi(x)\d \mu^k_0(x) \right| = 0.\]
This implies that for every $k \in \N$ there exists $\bar m(k)$ for which
\begin{equation}\label{conv_2}
\left| \int_{\Rd}\psi(x)\d \mu_0^{k,m}(x)  - \int_{\Rd}\psi(x)\d \mu^k_0(x) \right| \leq \frac{1}{k}, 
\end{equation}
and if we define $\bar\mu^k_0 := \mu^{k,\bar m(k)}_0$ it holds that
\begin{equation}\label{conv_3}
 \lim_{k \to +\infty} \left| \int_{\Rd}\psi(x)\d \bar\mu_0^k(x)  - \int_{\Rd}\psi(x)\d \mu_0(x) \right| = 0,
\end{equation} 
where we used \eqref{conv_1} and \eqref{conv_2}.
This easily implies that $W_1(\bar \mu^k_0, \mu_0) \to 0$, when $k \uparrow +\infty$, thanks to the superlinearity of $\psi$.
Moreover 
\begin{equation}\label{unif_control}
\sup_{k \in \N} \int_{\Rd}\psi(x) \d \bar\mu^k_0(x) <+\infty.
\end{equation}

STEP 3: (Compactness) Given $k \in \N$, from the previous step we get a finite set of initial data $x_{0,1}^k, \ldots, x_{0,k}^k \in \R^d$, hence we can introduce the system of characteristics
\begin{equation}\label{system}
\begin{system}
\dot x_i^k(t) = \vv(t,x_i^k(t),\xx(t)) \qquad t \in [0,T] \\
x^k_i(0) = x_{0,i}^k.
\end{system}
\end{equation}
Existence of a solution to \eqref{system} is guaranteed by the regularity of $\vv$. Indeed, defining $\vV(t,\xx) := ( \vv(t,x_1,\xx);$ $\ldots; \vv(t,x_k,\xx) )$, the system can be written as $\dot\xx(t) = \vV(t, \xx(t))$, where $\vV$ is Charateodory thanks to the continuity of $\vv$ w.r.t $x$ and condition \eqref{cont_v}.	
Now we can associate to $\xx(t)$ the empirical measure 
\begin{equation}\label{empirical_k}
\mu^k_t:= \frac{1}{k} \sum_{i=1}^k \delta_{x_i^k(t)}, \quad \forall\, t \in[0,T],
\end{equation}
which actually solves equation \eqref{CE} with initial condition $1/k \sum_{i=1}^k \delta_{x_{0,i}^k(t)}$.
It is easy to show (see e.g. \cite[Lem.~4.1]{fornasier2018mean}) that 
\begin{equation}
\sup_{t \in [0,T]} \int_{\Rd} |x| \d \mu^k_t(x) \leq \tilde C \left( 1 + \int_{\Rd} |x|\d \bar \mu_0^k(x) \right),
\end{equation}
where the constant $\tilde C$ depends only on $C, T$.
Thanks to the apriori estimate given in step 1 and the uniform control on the initial data in \eqref{unif_control} we actually get a stronger estimate
\begin{equation}\label{theta_estimate}
\sup_{k \in \N} \sup_{t \in [0,T]} \int_\Rd \theta(|x|) \d \mu_t^k(x) <+\infty.
\end{equation}
Furthermore, if $s \leq t \in [0,T]$ we compute
\begin{equation}\label{equicontinuity_k}
\begin{split}
W_1(\mu^k_s, \mu^k_t) &\leq \frac{1}{k} \sum_{i=1}^k \left| x_i^k(t) - x_i^k(t) \right| \leq \frac{1}{k} \sum_{i=1}^k  \int_s^t \left|  	 \vv(r,x_i^k(r), \mu^k_r) \right| \d r \\
&\leq |t-s| \int_\Rd |x| \d \mu_t^k(x) \leq l |t-s|, 
\end{split}
\end{equation} 
for some positive constant $l \in \R_+$ independent of $k$.
From the application of Ascoli-Arzel\`a theorem we get the existence of a limit curve $\mu \in C([0,T]; \PP_1(\Rd))$ such that
\begin{equation}\label{conv_wass_muk}
\lim_{k \to +\infty} \sup_{t \in [0,T]} W_1(\mu^k_t, \mu_t) = 0
\end{equation}

STEP 4: (Identification of the limit) To show that the candidate limit is a solution to equation \eqref{CE} let us check that for every $\varepsilon >0 $ there exists $\bar k$ such that for every $k \geq \bar k$
\begin{equation}\label{limit_eq_conv}
\left| \int_0^t\int_\Rd \nabla \phi(x) \cdot \vv(t,x,\mu^k_t) \d \mu_t^k(x)\d t - \int_0^t\int_\Rd \nabla \phi(x) \cdot \vv(t,x,\mu_t) \d \mu_t(x)\d t \right| < \varepsilon, \qquad \forall\, \phi \in C_c^1(Q; \Rd). 
\end{equation}
Thanks to \eqref{conv_wass_muk} there exists a compact set $\cC \subset \Rd$ (see e.g. \cite[pag.~22]{fornasier2018mean}) such that 
\[ \sup_{t \in [0,T]}\int_{\Rd \setminus \scC} \left( 1 + |x| \right) \d(\mu_t + \mu_t^k) \leq \varepsilon. \]
Hence \eqref{growth} assures that 
\begin{equation}
\left| \int_0^t\int_{\Rd \setminus \scC} \nabla \phi(x) \cdot \vv(t,x,\mu^k_t) \d \mu_t^k(x)\d t - \int_0^t\int_{\Rd \setminus \scC} \nabla \phi(x) \cdot \vv(t,x,\mu_t) \d \mu_t(x)\d t \right| < 2\varepsilon. 
\end{equation}
It remains to estimate
\begin{equation}
\begin{split}
&\left| \int_0^t\int_{\scC} \nabla \phi(x) \cdot \vv(t,x,\mu^k_t) \d \mu_t^k(x)\d t - \int_0^t\int_{\scC} \nabla \phi(x) \cdot \vv(t,x,\mu_t) \d \mu_t(x)\d t \right| \\
&\quad \leq  \left| \int_0^t\int_{\scC} \nabla \phi(x) \cdot \left( \vv(t,x,\mu^k_t) - \vv(t,x,\mu_t) \right)\d \mu^k_t(x)\d t \right| \\
&\quad +  \left| \int_0^t\int_{\scC} \nabla \phi(x) \cdot \vv(t,x,\mu_t)\left( \d \mu^k_t(x) - \d \mu_t(x) \right)\d t \right| \\
&\quad \leq \sup_{x\in \scC} \sup_{t \in [0,T]} |\vv(t,x,\mu_t^k) - \vv(t,x,\mu_t)| + \int_0^t\int_{\scC} \left( 1 + |x| \right)\left( \d \mu^k_t(x) - \d \mu_t(x) \right) \d t. \\
\end{split}
\end{equation}
Using \eqref{cont_v} and \eqref{conv_wass_muk} we can find $\bar k$ such that for every $k \geq \bar k$ the above quantity is controlled by $\varepsilon$. This conclude the proof, as the other terms of the (weak formulation of the) equation easily pass to the limit.

\smallskip
(ii) In the following, given a Borel vector field $\ww_t$ and the system of characteristics $\dot x_t = \ww(t,x_t)$, we will write 
$\cT_t: \R^d \to \R^d$ for the associated flow: $\cT_t(x_0) = x_t$, for every $t \in[0,T]$. 

Let us now assume that condition \eqref{lip_v} is in force and fix $\nu \in C([0,T];\PP_1(\R^d))$.
Then we have
\[|\vv(t,x,\nu)| \leq c(1 + |x|) \qquad \text{ and } \quad |\vv(t,x,\nu_t) - \vv(t,y,\nu_t)| \leq L|x-y|, \quad \forall \, x,y \in \R^d, \]
and from the general theory (see e.g. \cite[Prop.~8.1.8]{ambrosio2008gradient}) the system of characteristics $\dot x_t = \vv(t,x_t,\nu_t)$ admits a unique solution $\cT_t^\nu(x_0) = x_t$ which is globally defined in $[0,T]$ for $\mu_0$-a.e. $x_0 \in \R^d$.
Moreover, $\mu_t = (\cT_t^\nu)_\sharp \mu_0$ for every $t \in [0,T]$ is the unique solution to $\partial_t \mu_t + \nabla \cdot (\vv(t,x,\nu_t) \mu_t) = 0$.
Let us show that the map $\nu \mapsto \cT^\nu$ is a strict contraction in the space $C([0,T];\PP_1(\R^d))$.

From the a priori estimate \eqref{apriori_theta} we already know that the map $\cT: C([0,T];\PP_1(\R^d)) \to C([0,T];\PP_1(\R^d))$ is well defined.
Given $\nu^1, \nu^2 \in C([0,T];\PP_1(\R^d))$ we can compute
\begin{equation}
\begin{split}
\left| \cT_t^{\nu^1}(y) - \cT_t^{\nu^2}(y)  \right| &= \left| x^{\nu^1}_t - x^{\nu^2}_t  \right| \leq \int_0^t |\vv(s, x^{\nu^1}_s, \nu^1_s) - \vv(s, x^{\nu^2}_s, \nu^2_s)| \d s \\
&\leq L \int_0^t \left( |x^{\nu^1}_s - x^{\nu^2}_s| + W_1(\nu^1_s, \nu^2_s) \right) \d s \\
&\leq  Lte^t \sup_{s \in [0,T]} W_1(\nu^1_s, \nu^2_s)
\end{split}
\end{equation}
where  we used Gronwall inequality in the form \eqref{gronwall}. 
Hence
\begin{equation}
\begin{split}
W_1\left((\cT_t^{\nu^1})_\sharp (\mu_0), (\cT_t^{\nu^2})_\sharp (\mu_0)\right) &\leq \int_{\R^d} \left| \cT_t^{\nu^1}(y) - \cT_t^{\nu^2}(y)  \right| \d \mu_0(y) \\
&\leq C(t) \sup_{s \in [0,T]} W_1(\nu^1_s, \nu^2_s)
\end{split}
\end{equation}
where $C(t) \to 0$ as $t \downarrow 0$. If we choose $\tilde T$ so that $C(\tilde T) < 1$ we get 
\[ \sup_{t \in [0,\tilde T]} W_1\left((\cT_t^{\nu^1})_\sharp (\mu_0), (\cT_t^{\nu^2})_\sharp (\mu_0)\right) < \sup_{s \in [0,T]} W_1(\nu^1_s, \nu^2_s),\]
which assures the existence of a unique fixed point of the map $\cT$. 
Using global existence in time of the solution the argument can be easily extended to prove uniqueness on the whole interval $[0,T]$.

\smallskip
(iii) When $\supp (\mu_0) \subset \cC$, we firstly select $x^k_{0,1}, \ldots , x^k_{0,k} \in \cC$ and $\mu_0^k:= \mu^k[\xx_0]$ with the property that $W_1(\mu_0, \mu_0^k) \to 0$ as $k \uparrow +\infty$.
Let us now estimate the growth of $|x_i^k(t)|$, $i = 1, \ldots, k$, by writing 
\begin{equation}
\begin{split}
|x_i^k(t)| &\leq |x_{0,i}^k| + \int_0^t |\vv(s,x_i^k(s),\xx^k)| \d s \\
&\leq \max_{i=1, \ldots, k} |x_{0,i}^k| + c\int_0^t \left( 1 + 2\max_{i=1, \ldots, k} |x^k_i(s)| \right) \d s.
\end{split}
\end{equation}
Denoting by $\bar R:= \max \lbrace |x|: x \in \cC  \rbrace $ and using Gronwall lemma we get that
\begin{equation}
\sup_{t \in [0,T]}\max_{i=1, \ldots, k} |x^k_i(t)| \leq R,
\end{equation}
where $R:= (\bar R + cT)e^{2T}$.
Then if we  define $\mu^k_t := \mu^k[\xx_t]$ the sequence $\mu^k_t  \in C([0,T];\PP_1(B(0,R)))$ is uniformly supported in $B(0,R)$.
Using a compactness argument as in part (i) of the proof, we easily get that the limit measures also satisfy $\sup_{t \in [0,T]} \supp(\mu_t) \subset B(0,R)$ and we conclude.
\end{proof}

Let us finally show the following 
\begin{proof}[Proof of Theorem \ref{t:reg_stoch_current}]
The proof follows the same lines of \cite[Thm.~C.1]{bertini2017stochastic} (see also \cite{flandoli2005stochastic}).
We sketch here the main steps of the proof for the convenience of the reader. 

\medskip

\textit{Step 1}. 
There exists $\mathcal{C} \in L^2(\Omega; \R_+)$ such that $| J(\eta) | \leq \mathcal{C}(\omega) \|  \eta\|_{\shH^s}$.
To prove it, let us write
\begin{equation}
J(\eta) = \int_0^T \eta(t,X_t) \d X_t + \frac{1}{2}\sum_{i,j=1}^d \int_0^T \frac{\partial \eta_i}{\partial x_j}(X_t) \d [M_i,M_j]_t.
\end{equation}
and use Fourier inversion formula along with stochastic Fubini theorem.
Precisely, define the functions $e^m_{n,k}: Q \to \C^d$:
\begin{equation}
e^m_{n,k}(t,x):= \sqrt{\frac{2-\delta_{n,0}}{T}} \cos\left(\frac{n\pi t}{T}\right) \frac{e^{i  k \cdot x}}{(2\pi)^{d/2}}e_m,
\end{equation}
where $e_1 \ldots, e_d$ is the canonical basis in $\R^d$ and $n \in \Z_+$, $k \in \R^d$. 
Given $\eta$ as above we denote by $\hat \eta_n^m(k)$ its Fourier coefficients:
\begin{equation}
\hat \eta_n^m(k):= \int_0^T \int_{\R^d} [e^m_{n,k}]^* \cdot \eta(t,x) \d x \d t,
\end{equation}
where $*$ denotes the complex conjugation.
Hence, using \cite[Lemma~8]{flandoli2005stochastic} we get that $\P$-a.s.
\begin{equation}\label{eq:J_eta}
J(\eta) = \sum_{m,n}\int_{\R^d} \hat \eta_n^m(k)^* Z^m_n(k) \d k,
\end{equation}
where we set 
\begin{equation}\label{eq:def_Z^n}
\begin{split}
Z^m_n(k) &:= \int_0^T e^m_{n,k}(t,X_t) \circ \d X_t \\
&= \int_0^T e^m_{n,k}(t,X_t) \d X_t + \frac{i}{2} \sum_{j=1}^d  k_j \int_0^T  e^m_{n,k}(t,X_t) \d [X^j,X^m]_t.
\end{split}
\end{equation}
The above coefficients can be controlled in $L^2(\Omega)$ with a constant $C = C(T,d)$:
\begin{equation}
\begin{split}
\E \left( |Z^m_n(k)|^2 \right) &= 2\E \left( \left| \int_0^T e^m_{n,k}(t,X_t) \d M_t \right|^2 \right) + 2\E \left( \left| \int_0^T e^m_{n,k}(t,X_t) \d V_t \right|^2 \right) \\
&+ C|k|^2 \sum_{j=1}^d \E \left( \left| \int_0^T e^m_{n,k}(t,X_t) \d [X^j,X^m]_t \right|^2 \right) \\
&\leq 2 \| e^m_{n,k} \|_{\infty}^2 \left( \E[M]_T + \E|V|_T \right) + C|k|^2 \| e^m_{n,k} \|_{\infty}^2  \sum_{j} \left( [X^j]_T + [X^m]_T  \right) \\
&\leq C(1 + |k|^2),
\end{split}
\end{equation} 
where we used the fact that $[X^j,X^m]_t $ has bounded variation and can be written as $\frac{1}{4}[X^j  + X^m]_t - \frac{1}{4}[X^j  - X^m]_t$. 
Moreover we supposed that $\E[M]_T + \E|V|_T + [X^j]_T \leq C$: a rigorous localization procedure to justify this bound can be found in \cite[Thm.~9]{flandoli2005stochastic}.
Now if we extend $\eta$ to an even  function on $[-T,T]$, an equivalent norm in $\hH^s$ is given by
\begin{equation}
\|\eta\|^2_{\shH^s} = \sum_{n,m} \int_{\R^d} (1+n^2)^{s_1}(1+|k|^2)^{s_2} |\hat \eta_n^m(k)|^2 \d k
\end{equation}
Hence, applying Cauchy-Schwartz inequality in \eqref{eq:J_eta} we get 
\begin{equation}\label{J_est}
|J(\eta)|^2 \leq \mathcal{\tilde C} \| \eta \|^2_{\shH^s},
\end{equation}
where 
\begin{equation}\label{J_norm}
\mathcal{\tilde C} := \sum_{n,m} \int_{\R^d} (1+n^2)^{-s_1}(1+|k|^2)^{-s_2} |Z_n^m(k)|^2 \d k
\end{equation}
and 
\begin{equation}
\E \mathcal{\tilde C} \leq C \sum_{n} \int_{\R^d} (1+n^2)^{-s_1}(1+|k|^2)^{-s_2+1} \d k < \infty,
\end{equation}
due to the choice of $(s_1, s_2) \in (\frac{1}{2},1) \times (\frac{d+2}{2}, +\infty)$. So that $| J(\eta) | \leq \mathcal{C}(\omega) \|  \eta\|_{\shH^s}$. \\

\medskip

\textit{Step 2}. We show how the previous step guarantees the existence of the map $\cJ$.\\
Let $\Phi: \Omega \to \hH^s$ a simple function defined as $\Phi = \sum_{i} \eta_i 1_{\Omega_i}$, where $\eta_i \in \hH^s$ and $(\Omega_i)$ a finite partition of $\Omega$. 
Consider the functional
\begin{equation}
\cB(\Phi) = \E \left( \sum_i J(\eta_i) 1_{\Omega_i} \right).
\end{equation} 
From the previous step and Cauchy-Schwartz inequality we have
\begin{equation}
|\cB(\Phi)| \leq \E \left(\sum_i \mathcal{C} \|  \eta_i\|_{\shH^s} 1_{\Omega_i} \right) \leq \| \mathcal{C} \|_{L^2(\Omega)}\| \Phi \|_{L^2(\Omega; \shH^s)},
\end{equation} 
so that  $\cB$ is a linear and bounded operator, whence extendible by density  to $L^2(\Omega; \hH^s)$.
By Riesz representation theorem there exists $\Psi \in L^2(\Omega; \hH^{-s})$ such that 
\begin{equation}
\E \left( \langle \Psi, \eta \rangle 1_{\Omega'} \right) = \cB(\eta 1_{\Omega'}), \qquad  \forall \, \Omega' \subset \Omega, \eta \in \hH^s.
\end{equation}
From the arbitrariness of $\Omega'$ we get that $\langle \Psi, \eta \rangle = J(\eta)$ $\P$-a.s., and choosing any representative $\cJ: \Omega \to \hH^s$ of $\Psi$ we get the required result.
\end{proof}

\subsection*{Acknowledgements} 
I am grateful to Lorenzo Bertini for introducing me to the problem and providing invaluable help. I also thank Mauro Mariani for enlightening discussions on some aspects of this work.

\bibliography{biblio}
\bibliographystyle{plain}

\end{document}